\documentclass[11pt]{article}
\usepackage [dvips]{graphics}
\usepackage[centertags]{amsmath}
\usepackage{amsfonts}
\usepackage{amssymb}
\usepackage{amsthm}
\usepackage{newlfont}
\usepackage{stmaryrd}
\usepackage{color,epsfig}
\usepackage{amsfonts,graphicx,psfrag}
\usepackage{graphicx}
\usepackage{float}
\usepackage{subcaption}

\usepackage{txfonts}
\usepackage{mathrsfs}

\theoremstyle{plain}
\newtheorem{thm}{Theorem}[section]
\newtheorem{cor}[thm]{Corollary}
\newtheorem{lem}[thm]{Lemma}
\newtheorem{prop}[thm]{Proposition}

\newtheorem{rem}[thm]{Remark}

\oddsidemargin  = 0pt \evensidemargin = 0pt \marginparwidth = 1in
\marginparsep = 0pt \leftmargin     = 1.25in \topmargin =0pt
\headheight     = 0pt \headsep = 0pt \topskip =0pt
\footskip       =0.25in \textheight     = 9in \textwidth      =
6.5in
%
%
\def\sqr#1#2{{\vcenter{\vbox{\hrule height.#2pt
              \hbox{\vrule width.#2pt height#1pt \kern#1pt \vrule
width.#2pt}
              \hrule height.#2pt}}}}
%

%

%
%

\def\be{\begin{equation}}
\def\ee{\end{equation}}

%
%

\def\ga{{\gamma}}

\def\ep{{\epsilon}}

\def\Sp{{\mathrm {Sp}}}

\def\ga{{\gamma}}

%
%

\def\R{{\Bbb R}}

%
%

%

%

%
\def\<{{\langle}}
\def\>{{\rangle}}
\def\no{\noindent}

\def\bs{\bigskip}

%
%

\def\({\Big (}
\def\){\Big )}
\def\[{\Big[}
\def\]{\Big]}

\def\be{\begin{equation}}
\def\bel{\begin{equation}\label}
\def\ee{\end{equation}}
\def\bea{\begin{eqnarray}}
\def\eea{\end{eqnarray}}
\def\bt{\begin{theorem}}
\def\et{\end{theorem}}
\def\bc{\begin{corollary}}
\def\ec{\end{corollary}}
\def\bl{\begin{lemma}}
\def\el{\end{lemma}}
\def\bp{\begin{proposition}}
\def\ep{\end{proposition}}
\def\br{\begin{remark}}
\def\er{\end{remark}}
\def\ba{\begin{array}}
\def\ea{\end{array}}
\def\bd{\begin{definition}}
\def\ed{\end{definition}}

\makeatletter
   
   \@addtoreset{equation}{section}
\makeatother

\begin{document}

\title{\bf  Spectral instability of the regular $n$-gon elliptic  relative equilibrium in the planar $n$-body problem }
\author{Yuwei Ou\thanks{E-mail:ywou@sdu.edu.cn}
\quad Yunying Wang \thanks{E-mail:yunyingwyy@gmail.com}
\\ \\
School of Mathematics, Shandong University
Jinan, Shandong 250100\\
The People's Republic of China
}
\date{}
\maketitle
\begin{abstract}

 The regular $n$-gon elliptic relative equilibrium (ERE) is a Kepler homographic solution generated by the regular $n$-gon central configuration, and its linear stability depends on the eccentricity $\mathfrak{e}\in[0,1)$. While Moeckel \cite{Moe1} established the spectral instability for this solution at $\mathfrak{e}=0$ for all $n\geq3$, it remained unknown whether instability persists for $\mathfrak{e} \in (0,1)$. This paper resolves this problem: we prove that the regular $n$-gon ERE is spectral instability for all $n\geq 3$ and $\mathfrak{e} \in [0,1)$. Furthermore, we introduce the $\beta$-system (\ref{beta-system}) which related the Lagrange solution, and we developed an estimation method that, by testing the hyperbolicity of the $\beta$-system at a finite number of points alone, allows us to obtain extensive hyperbolic regions. As a corollary, for $n=3,4,5$, we uniformly demonstrate that the instability is hyperbolic (and hence stronger) for all $\mathfrak{e} \in [0,1)$.



\end{abstract}

\bs

\no{\bf AMS Subject Classification:} 70F10, 37J25, 37J46, 34L15

\bs

\no{\bf Key Words:}  linear stability, central configuration, elliptic relative equilibrium, planar $n$-body problem
\section{Introduction and main results}

For $n$ particles with masses $m_1,\cdots,m_n$, let $q_1,\cdots,q_n\in \mathbb{R}^2$ be the position vectors. Let
\bea U=\sum_{1\leq i< j\leq n} \frac{m_im_j}{\|q_i-q_j\|} \eea
be the negative potential function defined on the configuration space  $$
\Lambda=\{x=(x_1,\cdots,x_n)\in\mathbb{R}^{2n}\setminus\triangle:
\sum_{i=1}^nm_ix_i=0 \},$$ where
$\triangle=\{x\in\mathbb{R}^{2n}:\exists\, i\neq j,x_i=x_j \}$ is the
collision set.  Obviously, the orbits of the $n$ bodies satisfy the following Newton equation
\bea m_i\ddot{q}_i(t)=\frac{\partial
U}{\partial q_i}(q_1,...,q_n). \label{1.2} \eea

A planar central configuration of $n$ particles with mass center in original point is formed by $n$ position vector $a=(a_1,...,a_n)\in \mathbb{R}^{2n}$ which satisfy
\bea -\lambda \mathcal{M}a=\frac{\partial U(a)}{\partial q}, \label{CC}\eea
for constant $\lambda=U(a)/\cal{I}(a)>0$, where $\mathcal{M}=diag(m_{1},m_{1},m_{2},m_{2}\ldots,m_{n},m_{n})$,
$\cal{I}(a)=\sum m_j\|a_j\|^2$ is the moment of inertia. A planar central configuration of the $n$-body problem gives rise to a solution of
(\ref{1.2}) where each particle moves on a specific Keplerian orbit while the totality of the particles move on a homographic motion. More precisely,
the homographic solution generated by the cental configuration $a$ is
\bea\label{ERE}
x(t)=r(t)\mathfrak{R}(\theta(t))a,
\eea
where
$$
r(t)=\frac{\Omega^2/\lambda}{1+\mathfrak{e}\cos\theta(t)}, \ \ r^{2}\dot{\theta}(t)=\Omega,
$$
and $\mathfrak{R}(\theta(t))=diag(R(\theta(t)),\ldots, R(\theta(t)))$, $R(\theta(t))=\left( \begin{array}{cc} \cos \theta(t) & -\sin \theta(t)\\
\sin \theta(t) & \cos \theta(t)\end{array}\right)$, $\Omega\neq0$ is the angular momentum.
If the Keplerian orbit
is elliptic then the solution is an equilibrium in pulsating coordinates so we call this solution an elliptic
relative equilibrium (ERE for short), and a relative equilibrium in case $\mathfrak{e}=0$ (cf. \cite{MS}).

For $n=3$, there are only two kinds of central configurations, the Lagrange equilateral triangle central
configuration (see Figure (a)) and the Euler collinear central configuration (see Figure (b)).
\begin{figure}[H]
    \centering
    \begin{subfigure}[b]{0.33\textwidth}
        \includegraphics[width=\textwidth]{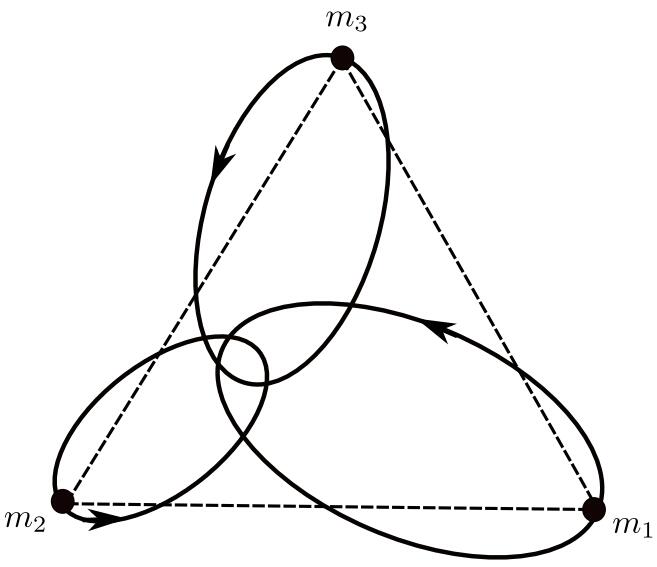}
        \caption{The Lagrange solution}
        \label{fig:sub1}
    \end{subfigure}
     \hspace{0.05\textwidth}
    \begin{subfigure}[b]{0.37\textwidth}
        \includegraphics[width=\textwidth]{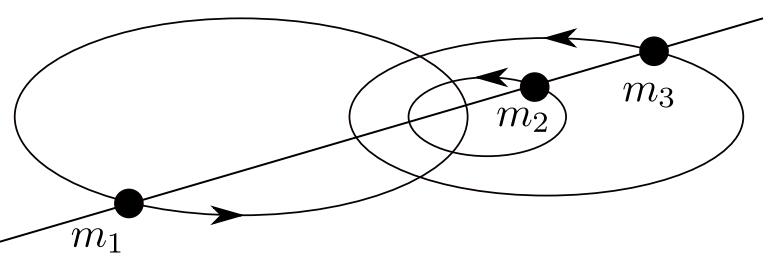}
        \caption{ The Euler solution}
        \label{fig:sub2}
    \end{subfigure}
    \label{fig:total}
\end{figure}
Lagrange solution in planar 3-body problem is found by Lagrange in 1772, which forms a equilateral triangle all the time. The study on the stability of Lagrange solution has a long history. From Gascheau \cite{G43} in 1843 for circle Lagrange solution to Danby\ \cite{D64} in 1964 for elliptic case, the stability of Lagrange solution can be described by two parameters, mass parameter
\bea
\beta_{L}=\frac{27(m_1m_2+m_1m_3+m_2m_3)}{(m_1+m_2+m_3)^2} \in [0,9]\nonumber
\eea
and eccentricity $\mathfrak{e}\in [0,1)$. Euler solution is another type long-historical ERE in planar 3-body problem discovered by Euler in 1767, which keeps collinear all the time.
The stability of Euler solution can be described by
\bea
\beta_{E}=\frac{m_1(3\mu^2+3\mu+1)+m_3\mu^2(\mu^2+3\mu+3)}{\mu^2+m_2[(\mu+1)^2(\mu^2+1)-\mu^2]} \in [0,7]\nonumber
\eea
and eccentricity $\mathfrak{e}\in [0,1)$, where $\mu$ is the unique positive solution of the Euler quintic polynomial equation, decided by the Euler configuration, the details can be found in \cite{ZL}.
In \cite{MSS1, MSS2}, Mart\'{\i}nez, Sam\`{a} and Sim\'{o} obtained the complete bifurcation diagrams numerically to describe the parameter regions of Lagrange solution and Euler solution and beautiful figures were drawn there for the full $(\beta,e)$ range. Hu and Sun firstly introduced Maslov-type index into the study of stability for Lagrange solution in \cite{HS}. In \cite{HLS}, Hu, Long and Sun further developed this method and gave a complete analytically description of the stability bifurcation diagrams of \cite{MSS1} for Lagrange solution. Later in \cite{BJP}, Barutello, Jadanza, and Portaluri studied the linear stability of Lagrange circular orbits with
$\alpha$-potentials using index theory. Further, Hu, Wang and the first author in \cite{HOW15} and \cite{HOW19} developed the trace formula for Hamiltonian system and firstly used it to study the stability of elliptic relative equilibria quantitatively. They estimated the stable region for elliptic Lagrange solution and elliptic-hyperbolic region for Euler solution with eccentricity $\mathfrak{e}$ less than some $\mathfrak{e}_0<0.34$.
Zhou and Long in \cite{ZL, ZL2} study the stability of elliptic Euler solution and Euler-Moulton solutions with the similar method. Hu and the first author in \cite{HO} develop the collision index to study the stability bifurcation diagram of Euler solution for the limit case (i.e $\mathfrak{e}\rightarrow1$). Recently, Hu, Sun and the first author in \cite{HOS} provide explicit stability estimates for the Lagrange, Euler
solutions over the full range of eccentricity.

For $n\geq 4$, it is difficult to find all central
configurations. The well know examples include the regular $n$-gon ($n$ equal masses $m_{k}$ placed at the vertices of a regular $n$-gon, see Figure (c))
and the regular $(1+n)$-gon (the $n$-gon configuration with a central body of arbitrary mass $m>0$, see Figure (d)).
Without loss of generality, we set $m_{k}=1$, for $k\in\{1,...,n\}$.
\begin{figure}[H]
    \centering
    \begin{subfigure}[b]{0.32\textwidth}
        \includegraphics[width=\textwidth]{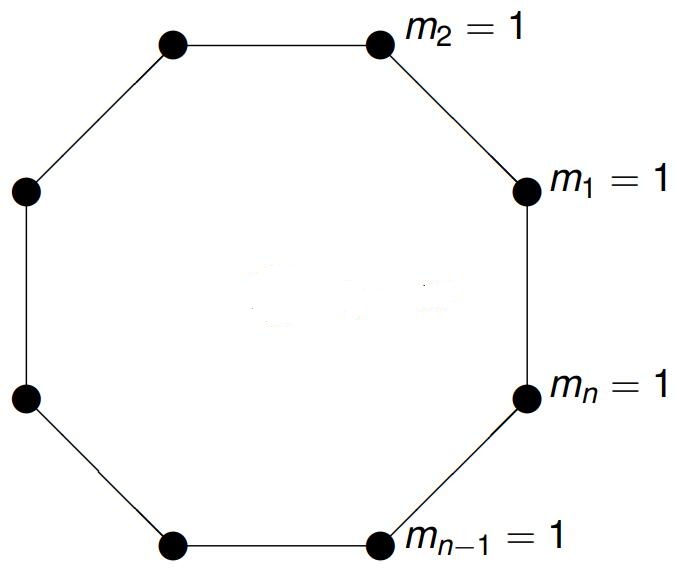}
        \caption{regular $n$-gon solution}
        \label{fig:sub3}
    \end{subfigure}
     \hspace{0.05\textwidth}
    \begin{subfigure}[b]{0.32\textwidth}
        \includegraphics[width=\textwidth]{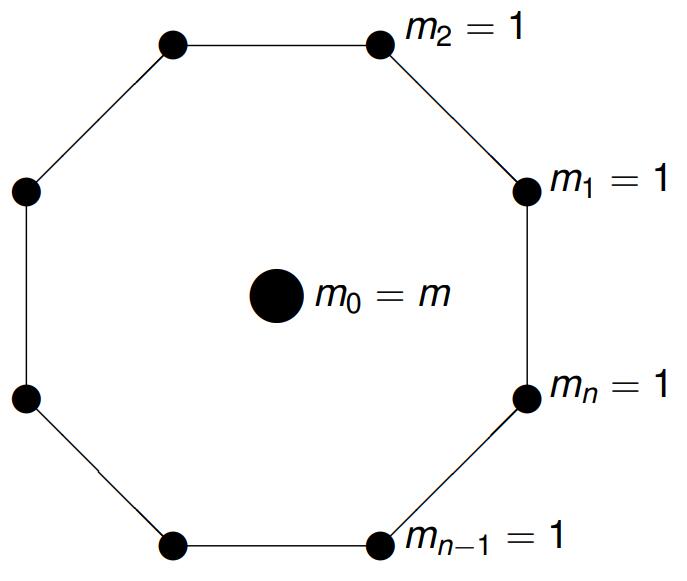}
        \caption{regular $(1+n)$-gon solution}
        \label{fig:sub4}
    \end{subfigure}
    \label{fig:total}
\end{figure}
The linear stability of the regular $(1+n)$-gon ERE depends on the mass $m$ of the central body and the eccentricity $\mathfrak{e}\in[0,1)$, while the linear stability
of the regular $n$-gon only depends on the eccentricity $\mathfrak{e}\in[0,1)$.
There have existed many works which studied the linear stability of the regular $(1+n)$-gon ERE and the regular $n$-gon ERE,
for the case with $\mathfrak{e}=0$. As far as we know, the regular $(1+n)$-gon was first started by Maxwell in his study on the stability of
Saturn's rings (cf. \cite{Max1,Max2}). Moeckel \cite{Moe2} proved that the regular $(1+n)$-gon is linearly stable for sufficiently
large $m$ only when $n\geq7$. For $n\geq 7$, Roberts found a value $h_n$ which is proportional to $n^3$, and the
regular $(1+n)$-gon is stable if and only if $m>h_n$ (cf. \cite{Rob1}). For the regular $n$-gon, Moeckel \cite{Moe1} shows that it
is spectral instability for any $n\geq 3$. Roberts \cite{Rob2} gives a simple proof of the instability of the regular $n$-gon for $n\geq7$.
For other related works, please refer to \cite{VK} and reference therein.

To the best of our knowledge, there are few results of the elliptic relative equilibria (i.e., the case with $\mathfrak{e}>0$) for the regular $(1+n)$-gon and
the regular $n$-gon central configurations. Recently, for the case $\mathfrak{e}>0$, the linear stability of the regular $(1+n)$-gon was first studied by Hu, Long and the first author in \cite{HLO}, they showed that for $n\geq 8$ and any eccentricity $\mathfrak{e}\in[0,1)$, the regular $(1+n)$-gon ERE is linearly stable when the central mass $m$ is large enough. Later in \cite{OS}, Sun and the first author also proved the regular $(1+7)$-gon is linearly stable when the central mass $m$ is large enough. This extends Moeckel's stability results for the regular $(1+n)$-gon ERE to arbitrary eccentricities $\mathfrak{e}\in[0,1)$. However, for the regular $n$-gon ERE, except the unstable results for $n=3, 4$ in \cite{HLS, HO}, we are not aware of any unstable result for $n\geq5$ and $\mathfrak{e}>0$.

In this paper, we study the spectral instability of the regular $n$-gon ERE for any $n\geq3$ and $\mathfrak{e}\in[0,1)$. Based on Meyer and Schmidt coordinate, we change the linear Hamiltonian system of ERE to a nice form by some symplectic transformation, see Theorem \ref{red-ERE}. In this nice form, we can easily see how the symmetry affects the system. Moreover, in Theorem \ref{redu-n-gon} we give the explicit expression of the regular $n$-gon system in the new coordinate, we can decompose this system to three subsystems, one is associated to the translation symmetry, the second is associated to the dilation and rotation symmetries of the system which is just the linear part of the Kepler orbits, and the third is the essential part. Based on this decomposition, we give the stability analysis of the regular $n$-gon system and prove the spectral instability of the regular $n$-gon ERE for all $n \geq 3$ and any eccentricity $\mathfrak{e} \in [0,1)$. This resolves a natural question arising from Moeckel's result \cite{Moe1}, which showed instability at $\mathfrak{e}=0$. Moreover, we introduce the $\beta$-system (\ref{beta-system}) which related the Lagrange solution, and we developed an estimation method that, by testing the hyperbolicity of the $\beta$-system at a finite number of points alone, allows us to obtain extensive hyperbolic regions. As a corollary,
for $n=3,4,5$, we demonstrate that the instability is hyperbolic (and hence stronger) for all $\mathfrak{e} \in [0,1)$. These findings extend and strengthen the understanding of linear stability for Kepler homographic solutions based on regular $n$-gon central configurations.

\textbf{Notations}. Following notations will be adopted throughout the paper.
\begin{itemize}
\item  We denote the real number set, complex number set, the non-negative integer
set ans the unit circle by $\mathbb{R}$, $\mathbb{C}$, $\mathbb{N}$ and $\mathbb{U}$ respectively.
\item Let $I_j$ be the identity matrix on $\mathbb{R}^j$ and
$J_{2j}=\left( \begin{array}{cccc}0_j& -I_j \\
                                  I_j& 0_j \end{array}\right)$, $\mathbb{J}_n=diag(J_2,...,J_2)_{2n\times2n}$. For simplicity, sometimes we omit the sub-indices of $I$ and $\mathbb{J}$,
but can be easily found out through the context.
\item Given a function $f: \mathbb{R}^{k} \to \mathbb{R}$ and matrix $C$, $\nabla f$ represents the gradient of $f$ with respect to the Euclidean inner product expressed as a column vector and
$D^2 f$ denote the Hessian of $f$, $C^{T}$ denote the transpose matrix of $C$.
\item We denote by $GL(\R^{2n})$ the invertible matrix group and $\text{Sym}(\R^{2n})$ be the set of symmetric matrix in $\mathbb{R}^{2n}$,
$$   \Sp(2n)=\{M\in GL(\R^{2n}),  M^TJM=J\}  $$
be the symplectic group.
\item As in \cite{Lon4}, for $M_1=\left( \begin{array}{cccc}A_1& A_2 \\
A_3 & A_4 \end{array}\right)$,  $M_2=\left( \begin{array}{cccc}B_1& B_2 \\
B_3 & B_4 \end{array}\right)$, the symplectic sum $\diamond$ is defined by
\bea  M_1\diamond M_2=\left( \begin{array}{cccccccc}A_1& 0 &A_2 & 0 \\
                                                     0 & B_1& 0  & B_2 \\
                                                    A_3& 0 &A_4 & 0 \\
                                                     0 & B_3& 0  & B_4 \end{array}\right).\nonumber   \eea
\item In what follows we write $A\geq B$ for two linear symmetric
operators $A$ and $B$, if $A-B\geq 0$, i.e. $A-B$ possesses no negative eigenvalues and
write $A>B$, if $A-B>0$, i.e. $A-B$ possesses only positive eigenvalues.
\end{itemize}
Then we have the following theorems.
\begin{thm}\label{red-ERE}
For the planar ERE which produced by the planar central configuration $a$, after some symplectic transformations, the fundamental solution satisfies the following equation,
\bea
\zeta'_{\mathfrak{e}}(\theta)=J_{4n}B(\theta)\zeta_{\mathfrak{e}}(\theta),\, \zeta_{\mathfrak{e}}(0)=I_{4n}.\label{EREH}
\eea
where
\bea B(\theta)=\left( \begin{array}{cccc} I_{2n} & -\mathbb{J}_{n} \\
\mathbb{J}_{n} & I_{2n}-\frac{I_{2n}+\frac{1}{\lambda}A^{T}D^2U(a)A}{1+\mathfrak{e}\cos\theta}
\end{array}\right),\quad \theta\in[0,2\pi], \label{msf} \eea
where $\mathfrak{e}$ is the eccentricity, $\lambda=U(a)/I(a)$ and
$A\in GL(\R^{2n})$ satisfies
\bea
\mathbb{J}_{n}A = A\mathbb{J}_{n},\ \ A^T\mathcal{M}A = I_{2n},\label{A}
\eea
 which is introduced by Meyer and Schmidt in \cite{MS}.
\end{thm}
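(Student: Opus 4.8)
The plan is to put (\ref{EREH})--(\ref{msf}) inside the Meyer--Schmidt pulsating--rotating reduction \cite{MS}: linearize the $n$-body Hamiltonian $H(q,p)=\tfrac12\langle\mathcal{M}^{-1}p,p\rangle-U(q)$ along the homographic solution (\ref{ERE}), transport the resulting time-dependent quadratic Hamiltonian through a chain of linear symplectic changes, and use the Keplerian relations for $r(t),\theta(t)$ together with the homogeneity and rotation-invariance of $U$ to collapse every appearance of $t$ into the factor $1/(1+\mathfrak{e}\cos\theta)$. Writing $q=x(t)+\xi$, the variational system is linear Hamiltonian with quadratic Hamiltonian $\tfrac12\langle\mathcal{M}^{-1}\eta,\eta\rangle-\tfrac12\langle D^{2}U(x(t))\xi,\xi\rangle$; that $a$ is a central configuration, $\nabla U(a)=-\lambda\mathcal{M}a$, is exactly what makes $x(t)$ a solution (so the linear terms drop), and homogeneity of degree $-1$ plus rotation-invariance give $D^{2}U(x(t))=r(t)^{-3}\mathfrak{R}(\theta(t))D^{2}U(a)\mathfrak{R}(\theta(t))^{T}$.

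First I would apply the constant (hence correction-free) symplectic map $(\xi,\eta)=(A\hat\xi,(A^{T})^{-1}\hat\eta)$: $A^{T}\mathcal{M}A=I_{2n}$ normalizes the kinetic term to $\tfrac12|\hat\eta|^{2}$, and $\mathbb{J}_{n}A=A\mathbb{J}_{n}$ gives $A\mathfrak{R}(\theta)=\mathfrak{R}(\theta)A$ (because $\mathfrak{R}(\theta)=\exp(\theta\mathbb{J}_{n})$), so the potential block becomes $r^{-3}\mathfrak{R}(\theta)\big(A^{T}D^{2}U(a)A\big)\mathfrak{R}(\theta)^{T}$. Next I pass to the co-rotating frame $\hat\xi=\mathfrak{R}(\theta(t))\bar\xi,\ \hat\eta=\mathfrak{R}(\theta(t))\bar\eta$ (symplectic, $\mathfrak{R}$ commuting with $\mathbb{J}_{n}$): the rotation disappears from the potential block, and the time-dependence of this change contributes, via the standard correction $\tfrac12 w^{T}P^{T}J\dot P\,w$ attached to a change $z=P(t)w$, a Coriolis term $c\,\dot\theta\langle\mathbb{J}_{n}\bar\xi,\bar\eta\rangle$ with a universal sign $c$. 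The quadratic Hamiltonian now reads $\tfrac12|\bar\eta|^{2}-\tfrac{r^{-3}}{2}\langle A^{T}D^{2}U(a)A\,\bar\xi,\bar\xi\rangle+c\,\dot\theta\langle\mathbb{J}_{n}\bar\xi,\bar\eta\rangle$.

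Then rescale $\bar\xi=r\,u,\ \bar\eta=r^{-1}v$ and switch the independent variable from $t$ to the true anomaly via $d\theta/dt=\Omega/r^{2}$, multiplying the Hamiltonian by $r^{2}/\Omega$; using $r^{2}\dot\theta=\Omega$, the Coriolis coefficient becomes exactly $\pm\mathbb{J}_{n}$, a symplectic scaling $u\mapsto\Omega^{-1/2}u,\ v\mapsto\Omega^{1/2}v$ removes the angular momentum, and $r=(\Omega^{2}/\lambda)/(1+\mathfrak{e}\cos\theta)$ makes the surviving $r$-powers collapse into $\tfrac1\lambda(1+\mathfrak{e}\cos\theta)^{-1}$ in front of $A^{T}D^{2}U(a)A$. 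The pulsating scaling leaves a cross term $\kappa(\theta)\langle u,v\rangle$, $\kappa=\mathfrak{e}\sin\theta/(1+\mathfrak{e}\cos\theta)$, coming from its own correction and from $r\dot r/\Omega=\mathfrak{e}\sin\theta/(1+\mathfrak{e}\cos\theta)$; I would eliminate it by the time-dependent symplectic shear $v\mapsto v+\kappa(\theta)u$, whose correction adds $\tfrac12\kappa'|u|^{2}$ and whose completion of the square adds $-\tfrac12\kappa^{2}|u|^{2}$, after which the elementary identity $\kappa'-\kappa^{2}=\mathfrak{e}\cos\theta/(1+\mathfrak{e}\cos\theta)=1-1/(1+\mathfrak{e}\cos\theta)$ forces these to combine with $-\tfrac1{2\lambda(1+\mathfrak{e}\cos\theta)}\langle A^{T}D^{2}U(a)A\,u,u\rangle$ into precisely $\tfrac12|u|^{2}-\tfrac1{2(1+\mathfrak{e}\cos\theta)}\langle(I_{2n}+\tfrac1\lambda A^{T}D^{2}U(a)A)u,u\rangle$. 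Up to a symplectic relabelling of the two $\mathbb{R}^{2n}$-factors placing the $\theta$-dependent block in the lower-right corner, the Hessian of the reduced $\theta$-Hamiltonian is then exactly $B(\theta)$ in (\ref{msf}), so its fundamental solution is (\ref{EREH}).

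The hard part is the bookkeeping across these steps, not any single idea: each time-dependent change contributes the extra quadratic term $\tfrac12 w^{T}P^{T}J\dot P\,w$, and the argument succeeds only if the Keplerian relations force the factors $r,\dot r,\dot\theta,\Omega$ produced by these corrections, by the degree-$-3$ homogeneity, and by the time reparametrization to cancel into the single factor $1/(1+\mathfrak{e}\cos\theta)$ and the constant blocks $I_{2n},\pm\mathbb{J}_{n}$ --- all the while keeping the sign conventions for $J_{4n}$, the sense of rotation, and the Coriolis term mutually consistent. Existence of $A$ with $A^{T}\mathcal{M}A=I_{2n}$ and $\mathbb{J}_{n}A=A\mathbb{J}_{n}$ (for instance $A=\mathcal{M}^{-1/2}$, diagonal and hence commuting with $\mathbb{J}_{n}$) and the symplecticity of each individual map are routine.
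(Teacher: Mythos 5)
Your proposal is correct and follows essentially the same route as the paper: linearize along the homographic solution, use homogeneity and rotation invariance to get $D^{2}U(x(t))=r^{-3}\mathfrak{R}(\theta)D^{2}U(a)\mathfrak{R}(\theta)^{T}$, and then compose the normalization by $A$, the passage to true anomaly, the pulsating rescaling by $r$, the shear killing the cross term (via $\kappa'-\kappa^{2}=1-1/(1+\mathfrak{e}\cos\theta)$), and the rotation to the co-rotating frame. The paper merely packages these steps in a different order (time change first, then a single matrix $S(\theta)$ combining the $A$-map, the $\sqrt{\Omega}/\tilde r$ scaling and the shear, with the rotation applied last), so the two arguments are the same in substance.
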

\begin{rem}
This form (\ref{msf}) is not explicitly given in \cite{MS}, it is first mentioned in \cite{HO}, but the detailed proof is lacking. For the convenience of readers in future applications, we provide a concrete proof in Section $2$. Also, in Section \ref{Se2}, based on this nice form, we can easily see how the symmetry contribute the characteristic multiplier $+1$ of the monodromy matrix $\zeta_{\mathfrak{e}}(2\pi)$ and affects the system.
\end{rem}

\begin{thm}\label{redu-n-gon}
We construct $A$ by (\ref{matrix A}) in Section \ref{Se3}, then the linear Hamiltonian system of the regular $n$-gon system in Theorem \ref{red-ERE}
is given by
\bea  \zeta'_{\mathfrak{e}}(\theta)=J_{4n}B(\theta)\zeta_{\mathfrak{e}}(\theta), \zeta_{\mathfrak{e}}(0)=I_{4n}. \nonumber \eea
with $B(\theta)=B_{1}(\theta)\diamond B_{2}(\theta)\diamond B_{3}(\theta)$, where
$$
B_{1}(\theta)=\left( \begin{array}{cccc} I_{2} & -\mathbb{J}_{1} \\
\mathbb{J}_{1} & I_{2}-\frac{I_{2}}{1+\mathfrak{e}\cos\theta}
\end{array}\right),\ \
B_{2}(\theta)=\left( \begin{array}{cccc} I_{2} & -\mathbb{J}_{1} \\
\mathbb{J}_{1} & I_{2}-\frac{I_{2}+\mathcal{U}_{0}
}{1+\mathfrak{e}\cos\theta}
\end{array}\right),\\
$$
\bea
B_{3}(\theta)=\mathcal{B}_1(\theta)\diamond\cdots\diamond \mathcal{B}_{[\frac{n}{2}]}(\theta). \nonumber\eea
$B_{1}(\theta)$ is associated to the translation symmetry, $B_{2}(\theta)$ associated to the dilation and rotation symmetries
and $B_{3}(t)$ corresponds to the core part of the linearized system. Moreover we have
\bea
\mathcal{B}_{l}(\theta)=\left( \begin{array}{cccc} I & -\mathbb{J}\\
\mathbb{J} & I-\frac{I+\mathcal{U}_{l}
}{1+\mathfrak{e}\cos\theta}  \end{array}\right), \, l=1,\cdots,[\frac{n}{2}]. \nonumber \eea
where $\mathcal{U}_{l}$ is given by Propostion \ref{lem-redu}. Here we omit the sub-indices of $I$ and $\mathbb{J}$, which are chosen to have the same dimensions as those of $\mathcal{U}_l$.
\end{thm}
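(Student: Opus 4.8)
\bigskip

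\noindent\textbf{Plan of proof.}
The starting point is Theorem \ref{red-ERE} applied to the regular $n$-gon central configuration $a$: since all masses are $1$, $\mathcal{M}=I_{2n}$, the normalization (\ref{A}) reads $A^{T}A=I_{2n}$, $\mathbb{J}_{n}A=A\mathbb{J}_{n}$, and the only $\theta$-dependent ingredient of $B(\theta)$ in (\ref{msf}) is the symmetric matrix $\frac{1}{\lambda}A^{T}D^{2}U(a)A$. Thus the theorem amounts to saying that, for a suitable $A$, this matrix is block diagonal in blocks compatible with the symplectic sum, the blocks being $0$ (translation), $\mathcal{U}_{0}$ (dilation--rotation), and $\mathcal{U}_{1},\dots,\mathcal{U}_{[\frac{n}{2}]}$ (core). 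The first step is to compute the Hessian. Differentiating $U=\sum_{i<j}\|q_i-q_j\|^{-1}$ twice, the $(i,i)$ block of $D^{2}U(a)$ is $\sum_{j\neq i}\bigl(-\|a_i-a_j\|^{-3}I+3\|a_i-a_j\|^{-5}(a_i-a_j)(a_i-a_j)^{T}\bigr)$ and the $(i,j)$ block for $i\neq j$ is minus the corresponding single summand; for the regular $n$-gon, with $a_k=\rho\,(\cos\frac{2\pi k}{n},\sin\frac{2\pi k}{n})$, these blocks depend on the pair $(i,j)$ only through $i-j\bmod n$ after a common rotation, which is precisely the statement that $D^{2}U(a)$ commutes with the order-$n$ orthogonal operator $g$ that rotates the plane by $2\pi/n$ while cyclically permuting the bodies (and $g$ commutes with $\mathbb{J}_{n}$).

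The second, and main, step is the symmetry reduction. Since $g$ is orthogonal, commutes with $\mathbb{J}_{n}$, and satisfies $g^{n}=I$, on $(\mathbb{R}^{2n},\mathbb{J}_{n})\cong\mathbb{C}^{n}$ it is unitary and $\mathbb{R}^{2n}$ splits orthogonally into its $n$ eigenspaces, each $\mathbb{J}_{n}$-invariant and (because $D^{2}U(a)g=gD^{2}U(a)$) preserved by $D^{2}U(a)$. The translation subspace $\{u_k\equiv v\}$ is one such eigenspace, on which $D^{2}U(a)=0$ by translation invariance of $U$; the plane $\operatorname{span}\{a,\mathbb{J}_{n}a\}$ is another, and there $D^{2}U(a)$ is pinned down by homogeneity, $D^{2}U(a)a=2\lambda\mathcal{M}a$, together with the central configuration equation (\ref{CC}) and rotational invariance, $D^{2}U(a)\mathbb{J}_{n}a=\mathbb{J}_{n}\nabla U(a)=-\lambda\mathcal{M}\mathbb{J}_{n}a$, which fixes the configuration-independent Kepler block $\mathcal{U}_{0}$. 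The reflection part of the dihedral symmetry of the $n$-gon identifies the remaining eigenspaces in pairs; grouping each such pair together (and keeping a self-conjugate eigenspace separate when $n$ is even) yields $[\frac{n}{2}]$ further invariant blocks, and the restriction of $\frac{1}{\lambda}A^{T}D^{2}U(a)A$ to the $l$-th of them is $\mathcal{U}_{l}$. Carrying all of this out explicitly --- writing down the matrix $A$ of (\ref{matrix A}) that is at once orthogonal, commutes with $\mathbb{J}_{n}$, sends $\mathbb{J}_{n}$ to the standard block form on every summand, and effects this block-diagonalization, and then computing the $2\times2$ blocks of the (twisted-circulant) Hessian in the adapted Fourier basis --- is the content of Proposition \ref{lem-redu}, and is where essentially all of the work lies.

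Granting that, the conclusion is immediate: substituting this $A$ into (\ref{msf}), the constant blocks $I_{2n}$ and $\mathbb{J}_{n}$ of $B(\theta)$ are unaffected by the decomposition (using $A^{T}\mathbb{J}_{n}A=\mathbb{J}_{n}$, which follows from $A^{T}A=I$ and $\mathbb{J}_{n}A=A\mathbb{J}_{n}$), while $\frac{1}{\lambda}A^{T}D^{2}U(a)A=0\oplus\mathcal{U}_{0}\oplus\mathcal{U}_{1}\oplus\cdots\oplus\mathcal{U}_{[\frac{n}{2}]}$; after the symplectic permutation that interleaves positions and momenta block by block, $B(\theta)$ becomes exactly $B_{1}(\theta)\diamond B_{2}(\theta)\diamond B_{3}(\theta)$ with $B_{3}=\mathcal{B}_{1}\diamond\cdots\diamond\mathcal{B}_{[\frac{n}{2}]}$ and each $\mathcal{B}_{l}$ of the displayed form. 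The main obstacle is therefore the middle step: meeting all the algebraic constraints on $A$ simultaneously while diagonalizing the Newtonian Hessian of the regular $n$-gon and keeping track of the correct block sizes --- in particular, identifying which modes contribute the lower-dimensional pieces among $\mathcal{U}_{1},\dots,\mathcal{U}_{[\frac{n}{2}]}$. Once $A$ and the blocks $\mathcal{U}_{l}$ are in hand, the decomposition $B=B_{1}\diamond B_{2}\diamond B_{3}$ is a routine matrix verification.
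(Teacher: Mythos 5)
Your proposal is correct and follows essentially the same route as the paper: the paper's own proof of this theorem is precisely the reduction you describe --- apply Theorem \ref{red-ERE} with $\mathcal{M}=I_{2n}$, invoke Proposition \ref{lem-redu} for the block-diagonalization $\frac{1}{\lambda}A^{T}D^2U(a)A=O_2\oplus\mathcal{U}_0\oplus\mathcal{U}_1\oplus\cdots\oplus\mathcal{U}_{[\frac{n}{2}]}$, and reassemble via the symplectic sum. Your representation-theoretic gloss (cyclic symmetry, Fourier eigenspaces, translation/dilation/rotation pinning down the first two blocks) is exactly what the paper's explicit computation with the vectors $\omega^{jl}\hat{R}(\theta_j)$ implements, so nothing genuinely different is being done.
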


\begin{thm}\label{unstable}
For $n\geq3$, the subsystem $\mathcal{B}_{1}$ of the regular $n$-gon ERE is hyperbolic for any eccentricity $\mathfrak{e}\in[0,1)$, consequently, the regular $n$-gon ERE is spectral instablity for any $\mathfrak{e}\in [0,1)$.
\end{thm}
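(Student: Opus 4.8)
The plan is to extract the subsystem $\mathcal{B}_1$ from the symplectic‑sum decomposition of Theorem \ref{redu-n-gon} and to prove that its monodromy matrix $\zeta_{\mathfrak{e},1}(2\pi)$ has no eigenvalue on the unit circle $\mathbb{U}$ for every $\mathfrak{e}\in[0,1)$. Since $\zeta_{\mathfrak{e}}(2\pi)$ is a $\diamond$‑direct sum having $\zeta_{\mathfrak{e},1}(2\pi)$ as a summand, hyperbolicity of $\mathcal{B}_1$ forces the spectrum of $\zeta_{\mathfrak{e}}(2\pi)$ to meet $\mathbb{C}\setminus\mathbb{U}$, which is exactly the claimed spectral instability; so everything reduces to the $4\times4$ block $\mathcal{B}_1(\theta)$. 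First I would write $\mathcal{U}_1$ out explicitly from Proposition \ref{lem-redu}: for the regular $n$-gon it is a $2\times2$ symmetric matrix whose entries are closed‑form trigonometric sums in $n$. Because $\mathbb{J}$ commutes with every planar rotation, conjugating $\mathcal{B}_1(\theta)$ by a constant block‑rotation $\mathrm{diag}(R,R)$ diagonalizes $\mathcal{U}_1=\mathrm{diag}(u_1,u_2)$ without disturbing the shape of the system; the associated second‑order form is then $y''-2\mathbb{J}y'-\frac{\mathrm{diag}(1+u_1,\,1+u_2)}{1+\mathfrak{e}\cos\theta}\,y=0$, so the essential data are the two numbers $1+u_1,\,1+u_2$ together with $\mathfrak{e}$.

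Second, I would settle the circular case $\mathfrak{e}=0$, where $\mathcal{B}_1$ is autonomous. The characteristic polynomial of the constant coefficient matrix $J\mathcal{B}_1$ reduces, via $w=\mu^2$, to the quadratic $w^2-(\mathrm{tr}(I+\mathcal{U}_1)-4)\,w+\det(I+\mathcal{U}_1)=0$, and $\mathcal{B}_1$ is hyperbolic precisely when this quadratic has no root in $(-\infty,0]$. Plugging in the explicit $1+u_i$ turns this into an inequality in $n$ — for $n=3$ it reads $\det(I+\mathcal{U}_1)>\tfrac14(\mathrm{tr}(I+\mathcal{U}_1)-4)^2$, i.e. the multipliers form a complex quadruple — and I would verify the relevant inequality for all $n\ge3$ by elementary estimates on the trigonometric sums (monotonicity in $n$, or splitting into small $n$ checked directly and large $n$ handled asymptotically). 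This recovers and sharpens Moeckel's $\mathfrak{e}=0$ instability through the $\mathcal{B}_1$-mode.

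Third — the substantive part — I would propagate the conclusion to all $\mathfrak{e}\in(0,1)$. At $\mathfrak{e}=0$ the four characteristic multipliers of the $4\times4$ symplectic matrix $\zeta_{\mathfrak{e},1}(2\pi)$ all lie off $\mathbb{U}$, and they cannot cross $\mathbb{U}$ as $\mathfrak{e}$ increases without first meeting on it; one excludes this by controlling the two symplectic invariants $c_1=\mathrm{tr}\,\zeta_{\mathfrak{e},1}(2\pi)$ and $c_2$, showing that $(c_1,c_2)$ never enters the elliptic region where $y^2-c_1y+(c_2-2)=0$ has a root in $[-2,2]$. For the uniform control over the whole range $\mathfrak{e}\in[0,1)$ I would either use the trace formula of \cite{HOW15,HOW19} applied to the $\mathcal{B}_1$-system, which rewrites these invariants as integrals whose signs are forced by the explicit $\mathcal{U}_1$, or relate $\mathcal{B}_1$ to the $\beta$-system (\ref{beta-system}) at an explicit value $\beta=\beta_1(n)$ (for $n=3$ this is $\beta_L=9$) and invoke the finite‑point‑testing estimate for the hyperbolic region of the $\beta$-system to place $(\beta_1(n),\mathfrak{e})$ inside it for all $\mathfrak{e}$ and all $n$.

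The step I expect to be the main obstacle is precisely this uniform passage to all eccentricities: the factor $\frac{1}{1+\mathfrak{e}\cos\theta}$ develops a near‑singularity at $\theta=\pi$ as $\mathfrak{e}\to1^-$, so any bound on the monodromy that must survive up to $\mathfrak{e}=1$ has to be genuinely quantitative rather than a soft compactness argument; making the sign of the relevant integral (equivalently, the location of $(c_1,c_2)$) robust in this limit, while keeping the estimates uniform in $n$, is where the real difficulty lies. A secondary point is to state carefully that the monodromy of the $\mathcal{B}_1$-block is a direct summand of the full monodromy — this is guaranteed by the $\diamond$-decomposition of Theorem \ref{redu-n-gon}, and it is what turns hyperbolicity of $\mathcal{B}_1$ into spectral instability of the regular $n$-gon ERE.
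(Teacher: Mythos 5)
Your overall architecture matches the paper's: isolate the $\mathcal{B}_1$-block from the $\diamond$-decomposition, prove it is hyperbolic, and conclude spectral instability of the full monodromy matrix (that last reduction is correct and is exactly what the paper does). But the proof as proposed has a genuine gap at the step you yourself flag as the main obstacle, and the gap comes from missing the one structural fact that makes the whole argument collapse. By Proposition \ref{lem-redu}, $\mathcal{U}_1$ is not merely a symmetric $2\times2$ matrix: it is the \emph{scalar} matrix $\frac{z}{\lambda}I_2$ with $z=\sum_{j=1}^{n-1}\frac{1}{2d_{nj}^3}(1-\cos 2\theta_j)>0$. Your plan treats it as $\mathrm{diag}(u_1,u_2)$ with possibly distinct entries, which leaves a genuinely coupled two-degree-of-freedom system (the $-2\mathbb{J}\frac{d}{d\theta}$ term survives a constant conjugation) and forces you into the hard $\beta$-system-type analysis. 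Because $u_1=u_2$, the paper instead conjugates by the \emph{time-dependent} rotation $\mathrm{diag}(\mathfrak{R}(\theta),\mathfrak{R}(\theta))$, which removes the $\mathbb{J}$-coupling entirely while leaving the isotropic potential unchanged; the $\mathcal{B}_1$-system becomes two uncoupled copies of the one-degree-of-freedom system (\ref{subha}) with $\delta=1+\frac{z}{\lambda}>1$, and Lemma \ref{lem-hyp} gives hyperbolicity for every $\delta>1$ and every $\mathfrak{e}\in[0,1)$ in one stroke. No $\mathfrak{e}=0$ base case, no continuation in eccentricity, and no control of the symplectic invariants near $\mathfrak{e}=1$ are needed.

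Your proposed routes for the continuation step do not close the gap as stated. The trace-formula / invariant-tracking route is only sketched, and you note yourself that it requires quantitative bounds surviving the singular limit $\mathfrak{e}\to1^{-}$ uniformly in $n$ --- that is precisely the content that is missing. The $\beta$-system route does not apply to $\mathcal{B}_1$ for general $n$: the $\beta$-system requires $I_2+\mathcal{U}$ to be of the form $\frac{3}{2}I_2+\beta\,\mathrm{diag}(-1,1)$, i.e. of trace $3$, whereas $\mathrm{tr}(I_2+\mathcal{U}_1)=2(1+z/\lambda)$ equals $3$ only for $n=3$; moreover Theorem \ref{hyper} rests on numerically checking finitely many points $(\beta_0,\mathfrak{e}_0)$ and could not yield a statement uniform in all $n\geq3$. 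The fix is short: compute $\mathcal{U}_1$ from Proposition \ref{lem-redu}, observe it is a positive scalar matrix, pass to the rotating frame, and invoke Lemma \ref{lem-hyp}.
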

In particular, for $n = 3,4,5$ and $\mathfrak{e} = 0$, Moeckel in \cite{Moe1} further established that the regular $n$-gon ERE is hyperbolic. It is therefore natural to extend these results to arbitrary eccentricity $\mathfrak{e} \in (0,1)$. To achieve this goal, in Section $5$, we further develop an analytical technique and estimate the hyperbolic region for the following
$\beta$-system,
$$
\gamma_{\beta,\mathfrak{e}}^{\prime} (\theta) =J \mathcal{B}_{\beta,\mathfrak{e}} (\theta) \gamma_{\beta,\mathfrak{e}},\ \ \gamma_{\beta,\mathfrak{e}} (0) =I_{4} .
$$
where
$$
\mathcal{B}_{\beta,\mathfrak{e}} (\theta) =\left (\begin{array}{cc}
I_{2} & -J_{2} \\
J_{2} & I_{2}-\frac{\mathcal{R}_{\beta}}{1+\mathfrak{e} \cos \theta}
\end{array}\right),\ \ \mathcal{R}_{\beta}=\frac{3}{2}I_{2}+\beta\left(
                                                                   \begin{array}{cc}
                                                                     -1 & 0 \\
                                                                     0 & 1 \\
                                                                   \end{array}
                                                                 \right),\ \ \beta\geq0.
$$
Although numerical evidence suggests that $\gamma_{\beta,\mathfrak{e}}(2\pi)$ exhibits persistent hyperbolicity over extensive regions, such methods bear a fundamental limitation: they are theoretically incapable of verifying hyperbolicity at infinitely many points. Also, the errors of the numerical methods is hard to control when the eccentricity is closed to one, because the linear equation has singularity at $\mathfrak{e}=1$. Based on the reason, we further obtain the following Theorem \ref{hyper}, which tells us that if we know
the hyperbolicity of system (\ref{beta-system}) for some fixed points $(\beta_{0}, \mathfrak{e}_{0})$, then we can obtain a large hyperbolic region. By carefully selecting appropriate parameters $(\beta_{0}, \mathfrak{e}_{0})$, we only need to check the hyperbolicity of the $\beta$-system at this finite points $(\beta_{0}, \mathfrak{e}_{0})$.
\begin{thm}\label{hyper}
Take finite set $\mathcal{K}=\{(1.36,0.0), (1.36,0.1),(1.36,0.2),\ldots,(1.36,0.9)\}$, numerically, one can check $\gamma_{\beta,\mathfrak{e}}(2\pi)$ is hyperbolic for $(\beta_{0},\mathfrak{e}_{0})\in\mathcal{K}$. Then $\gamma_{\beta,\mathfrak{e}}(2\pi)$ is hyperbolic in region $(\beta,\mathfrak{e})\in\mathfrak{U}$,
where
$$
\mathfrak{U}=\bigcup_{(\beta_{0},\mathfrak{e}_{0})\in\mathcal{K}}\big(\mathfrak{U}_{1}(\beta_{0},\mathfrak{e}_{0})\cup\mathfrak{U}_{2}(\beta_{0},\mathfrak{e}_{0})\big)
$$
with
\bea
\mathfrak{U}_{1}(\beta_{0},\mathfrak{e}_{0})=\big\{(\beta,\mathfrak{e}) | 0\leq\beta<\frac{1+\mathfrak{e}}{1+3\mathfrak{e}-2\mathfrak{e}_{0}}\beta_{0},\ \ \mathfrak{e}_{0}\leq\mathfrak{e}  \big\},\nonumber\\
\mathfrak{U}_{2}(\beta_{0},\mathfrak{e}_{0})=\big\{(\beta,\mathfrak{e}) | 0\leq\beta<\frac{1-\mathfrak{e}}{1-3\mathfrak{e}+2\mathfrak{e}_{0}}\beta_{0},\ \ \mathfrak{e}_{0}\geq\mathfrak{e}  \big\}.\nonumber
\eea
\begin{figure}[H]
    \centering
    \includegraphics[width=10cm]{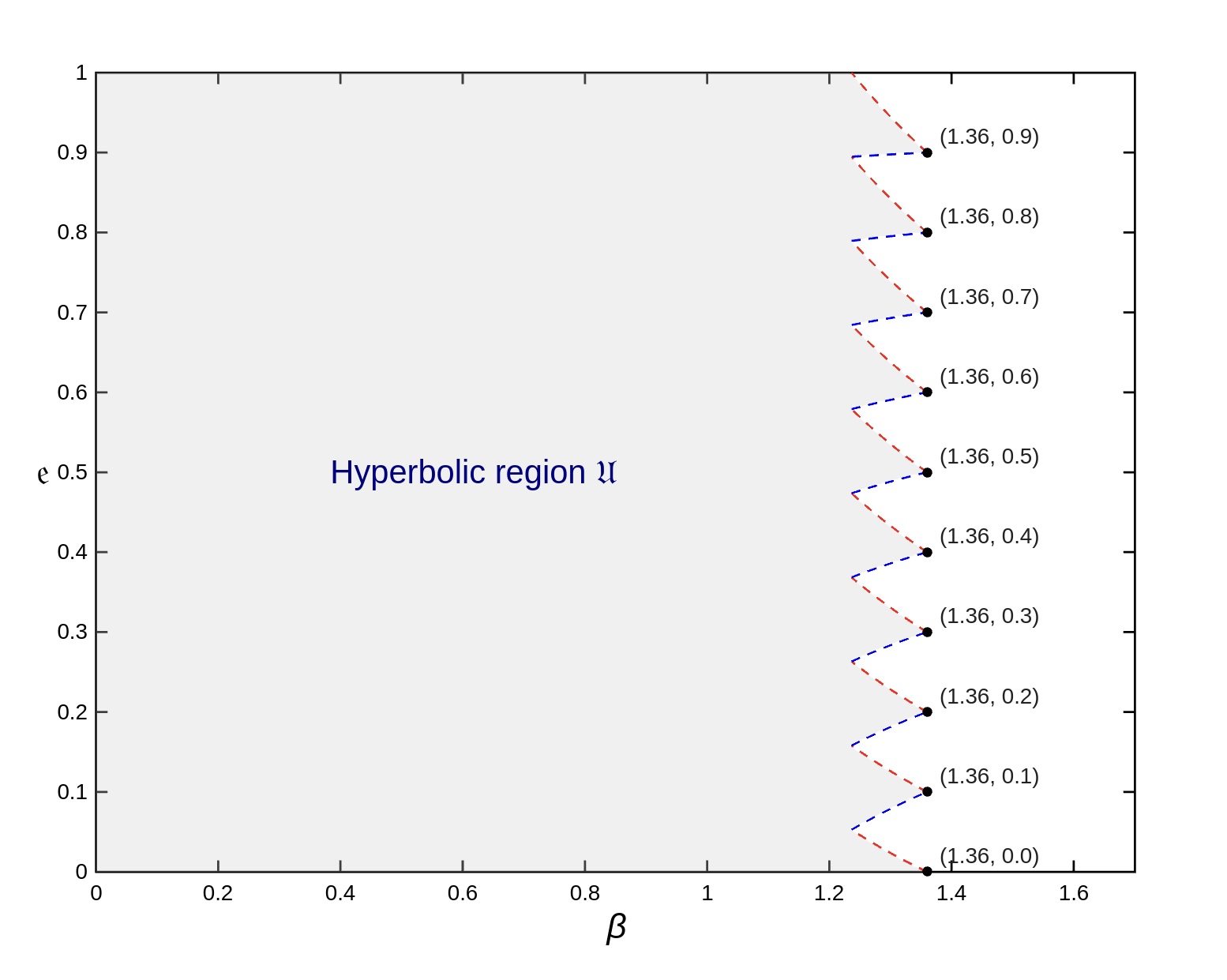}
    \caption{Hyperbolic region $\mathfrak{U}$ (gray)}
    \label{fig:tg-}
\end{figure}
\end{thm}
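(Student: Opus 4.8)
My plan is to recast hyperbolicity as a non‑degeneracy statement for a family of boundary value problems, and then to transport non‑degeneracy from each point of $\mathcal{K}$ to a neighbourhood by an explicit comparison estimate whose optimisation produces exactly the regions $\mathfrak{U}_1,\mathfrak{U}_2$. By Floquet theory, $\gamma_{\beta,\mathfrak{e}}(2\pi)$ is hyperbolic iff for every $\omega\in\mathbb{U}$ the boundary value problem $\gamma'=J\mathcal{B}_{\beta,\mathfrak{e}}(\theta)\gamma$, $\gamma(2\pi)=\omega\gamma(0)$, has only the trivial solution, equivalently the self‑adjoint operator $\mathcal{A}_{\beta,\mathfrak{e},\omega}:=-J\frac{d}{d\theta}-\mathcal{B}_{\beta,\mathfrak{e}}(\theta)$ with $\omega$‑boundary condition is invertible. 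It is enough to treat the region $\mathfrak{U}_1$ (the case $\mathfrak{e}\ge\mathfrak{e}_0$): the region $\mathfrak{U}_2$ follows by the $2\pi$‑periodic substitution $\theta\mapsto\theta+\pi$, which conjugates the monodromy and turns $1+\mathfrak{e}\cos\theta$ into $1-\mathfrak{e}\cos\theta$, thereby interchanging the extreme values $1+\mathfrak{e}$ and $1-\mathfrak{e}$ of $1+\mathfrak{e}\cos\theta$ — this is precisely what sends the bound $\frac{1+\mathfrak{e}}{1+3\mathfrak{e}-2\mathfrak{e}_0}$ to $\frac{1-\mathfrak{e}}{1-3\mathfrak{e}+2\mathfrak{e}_0}$. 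So fix $(\beta_0,\mathfrak{e}_0)\in\mathcal{K}$ and $(\beta,\mathfrak{e})\in\mathfrak{U}_1(\beta_0,\mathfrak{e}_0)$.

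The heart of the proof is a comparison lemma. Writing $\rho_{\mathfrak{e}}(\theta)=(1+\mathfrak{e}\cos\theta)^{-1}$, the essential block of $\mathcal{B}_{\beta,\mathfrak{e}}$ is $I_2-\rho_{\mathfrak{e}}(\theta)\mathcal{R}_{\beta}$ with $\mathcal{R}_{\beta}=\mathrm{diag}(\tfrac{3}{2}-\beta,\tfrac{3}{2}+\beta)$, so that $\mathrm{tr}\,\mathcal{R}_{\beta}=3$ is independent of $\beta$, $\det\mathcal{R}_{\beta}=\tfrac{9}{4}-\beta^{2}$, and $(\tfrac{3}{2}-\beta)I_2\le\mathcal{R}_{\beta}\le(\tfrac{3}{2}+\beta)I_2$. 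I would use a $\theta$‑dependent symplectic rescaling $z\mapsto\mathrm{diag}(\mu(\theta)^{-1}I_2,\mu(\theta)I_2)z$ with $\mu(\theta)^{2}\rho_{\mathfrak{e}}(\theta)$ constant, which moves the eccentricity factor $\rho_{\mathfrak{e}}$ off $\mathcal{R}_{\beta}$ and onto the identity part of the block, together with the scalar pinching of $\mathcal{R}_{\beta}$ and the pointwise bounds $\tfrac{1}{1+\mathfrak{e}}\le\rho_{\mathfrak{e}}(\theta)\le\tfrac{1}{1-\mathfrak{e}}$, to compare the $\omega$‑index forms of $\mathcal{B}_{\beta,\mathfrak{e}}$ with those of $\mathcal{B}_{\beta_0,\mathfrak{e}_0}$ on the closure of $\mathfrak{U}_1(\beta_0,\mathfrak{e}_0)$; optimising the resulting constant over $\cos\theta\in[-1,1]$ (that is, over the two extreme radii of the Kepler ellipse) is exactly what yields the inequality $\beta(1+3\mathfrak{e}-2\mathfrak{e}_0)<\beta_0(1+\mathfrak{e})$ cutting out $\mathfrak{U}_1$. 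One then joins $(\beta_0,\mathfrak{e}_0)$ to $(\beta,\mathfrak{e})$ by a path in $\overline{\mathfrak{U}_1(\beta_0,\mathfrak{e}_0)}$; since invertibility of $\mathcal{A}_{\cdot,\cdot,\omega}$ is open, the only way hyperbolicity could fail to persist along the path is through a Floquet multiplier crossing $\mathbb{U}$, which the comparison rules out on $\overline{\mathfrak{U}_1(\beta_0,\mathfrak{e}_0)}$. Hence $\gamma_{\beta,\mathfrak{e}}(2\pi)$ is hyperbolic throughout $\mathfrak{U}_1(\beta_0,\mathfrak{e}_0)$, and taking the union over $\mathcal{K}$ and over $\mathfrak{U}_1,\mathfrak{U}_2$ gives hyperbolicity on $\mathfrak{U}$.

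The main obstacle is that the $\beta$‑perturbation $\mathcal{R}_{\beta}-\mathcal{R}_{\beta_0}=(\beta-\beta_0)\,\mathrm{diag}(-1,1)$ is indefinite, so there is no direct pointwise matrix inequality between $\mathcal{B}_{\beta,\mathfrak{e}}(\theta)$ and $\mathcal{B}_{\beta_0,\mathfrak{e}_0}(\theta)$ in either order; the comparison must instead be organised around the invariants $\mathrm{tr}\,\mathcal{R}_{\beta}$ and $\det\mathcal{R}_{\beta}$ and the scalar pinching, and along the homotopy one must carefully account for the Maslov‑type index through the possible Krein collisions, proving that none of them pushes a multiplier onto $\mathbb{U}$ while $(\beta',\mathfrak{e}')$ remains in $\overline{\mathfrak{U}_1(\beta_0,\mathfrak{e}_0)}$. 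Finally, the hypothesis that $\gamma_{\beta_0,\mathfrak{e}_0}(2\pi)$ is hyperbolic at the ten points of $\mathcal{K}$ is a finite numerical computation; it is consistent with the explicit constant‑coefficient case $\mathfrak{e}=0$, where the Floquet exponents satisfy $\mu^{4}+\mu^{2}+\det\mathcal{R}_{\beta}=0$ and hyperbolicity holds iff $\det\mathcal{R}_{\beta}>\tfrac{1}{4}$, i.e. $\beta<\sqrt{2}$, so that $\beta_0=1.36$ is (just) admissible.
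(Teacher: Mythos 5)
Your overall architecture --- reduce hyperbolicity to a non-degeneracy statement for $\omega$-boundary-value problems, verify it numerically at the ten points of $\mathcal{K}$, and transport it by a comparison estimate whose optimisation produces the bounds cutting out $\mathfrak{U}_1$ and $\mathfrak{U}_2$ --- matches the paper's, and your $\mathfrak{e}=0$ sanity check ($\beta<\sqrt2$) is correct. But the central comparison lemma, which is the entire mathematical content of the theorem, is never actually supplied. You correctly identify the obstruction (the perturbation $(\beta-\beta_0)\,\mathrm{diag}(-1,1)$ is indefinite, so no pointwise matrix inequality holds between the two Hamiltonians in either order), but your proposed remedies do not resolve it. Scalar pinching $(\tfrac32-\beta)I_2\le\mathcal{R}_\beta\le(\tfrac32+\beta)I_2$ throws away far too much: bounding $\mathcal{R}_\beta$ from below by $(\tfrac32-\beta)I_2$ reduces to the Kepler operator $\mathcal{A}(\tfrac32-\beta,\mathfrak{e})$ and only yields hyperbolicity for $\beta<\tfrac12$ (the old result of \cite{O}), nowhere near $\beta_0=1.36$. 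The homotopy/Krein-collision bookkeeping is circular: ruling out a multiplier crossing $\mathbb{U}$ along a path inside $\overline{\mathfrak{U}_1(\beta_0,\mathfrak{e}_0)}$ is exactly the statement to be proved, and ``invertibility is open'' gives no quantitative neighbourhood. Finally, your reduction of $\mathfrak{U}_2$ to $\mathfrak{U}_1$ via $\theta\mapsto\theta+\pi$ is, as stated, a non-sequitur: that shift conjugates the monodromy and hence leaves its spectrum unchanged, so by itself it produces no new statement (it could be salvaged by proving the $\mathfrak{U}_1$ comparison for signed eccentricities and substituting $\mathfrak{e}\mapsto-\mathfrak{e}$, $\mathfrak{e}_0\mapsto-\mathfrak{e}_0$, but that is not what you wrote; the paper instead reruns the comparison with the factor $\frac{1-\mathfrak{e}_0}{1-\mathfrak{e}}$ in Lemma \ref{lem-posi2}).

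The paper closes the gap with two ingredients absent from your proposal. First, it works not with invertibility of the first-order operator but with \emph{positivity} of the second-order operator $\mathcal{F}(\mathfrak{e},\beta)$ of (\ref{ope-est}): Corollary \ref{Hy-Po2} (imported from \cite{HLS}) states that for $\beta\in[0,3/2]$, hyperbolicity of $\gamma_{\beta,\mathfrak{e}}(2\pi)$ is \emph{equivalent} to positivity of $\mathcal{F}(\mathfrak{e},\beta)$ on $\bar{D}_2(\omega,2\pi)$ for all $\omega\in\mathbb{U}$; this is what converts the ten numerical checks into an order-theoretic datum that can be transported by operator inequalities. Second, the indefiniteness is handled by an exact cancellation rather than pinching: one forms $\mathcal{F}(\mathfrak{e},\beta)-\frac{\beta}{\beta_0}\frac{1+\mathfrak{e}_0}{1+\mathfrak{e}}\mathcal{F}(\mathfrak{e}_0,\beta_0)$, where the coefficient $\frac{\beta}{\beta_0}$ is tuned so that the indefinite parts match; writing $\beta\,\mathrm{diag}(-1,1)=-\beta I_2+2\beta\,\mathrm{diag}(0,1)$, the positive-semidefinite $\mathrm{diag}(0,1)$ contributions compare pointwise via $\frac{1+\mathfrak{e}_0}{1+\mathfrak{e}}\le\frac{1+\mathfrak{e}_0\cos\theta}{1+\mathfrak{e}\cos\theta}$ (valid exactly when $\mathfrak{e}\ge\mathfrak{e}_0$), and what remains is a scalar Kepler operator $\mathcal{A}(\delta_1,\mathfrak{e})$. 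The defining inequality of $\mathfrak{U}_1$ is precisely the condition $\delta_1>1$, which by Lemma \ref{lem-hyp} is the positivity threshold of that scalar operator --- so the bound does not come from optimising over the extreme radii alone, as you suggest, but from combining the extreme-radius constant with the scalar positivity threshold. Without Corollary \ref{Hy-Po2} and this tuned subtraction, the proposal does not reach the stated region.
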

Based on Theorem \ref{hyper}, we will provide a unified proof of the following corollary.
\begin{cor}\label{hyperbolicity}
For $n=3, 4, 5$ and any $\mathfrak{e}\in [0,1)$, the regular $n$-gon ERE is hyperbolic.
\end{cor}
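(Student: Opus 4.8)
The plan is to peel off the factors produced by Theorem \ref{redu-n-gon} and to push the essential ones into the $\beta$-system, where Theorem \ref{hyper} applies. By Theorem \ref{redu-n-gon} the monodromy of the regular $n$-gon ERE is a symplectic $\diamond$-sum of the monodromies attached to $B_1(\theta)$, $B_2(\theta)$ and $B_3(\theta)=\mathcal{B}_1(\theta)\diamond\cdots\diamond\mathcal{B}_{[n/2]}(\theta)$; the factors coming from $B_1$ (translation) and $B_2$ (the linear part of the Kepler orbit) are unipotent for structural reasons and carry only the forced multiplier $+1$, so hyperbolicity of the ERE means precisely that every essential monodromy attached to $\mathcal{B}_l(\theta)$, $l=1,\dots,[n/2]$, has no eigenvalue on $\mathbb{U}$. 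Theorem \ref{unstable} already supplies this for $\mathcal{B}_1$, for all $n\ge3$ and all $\mathfrak{e}\in[0,1)$. Since $[n/2]=1$ when $n=3$ (indeed $\mathcal{B}_1$ is then the $\beta=0$ system, i.e. the equal-mass Lagrange core), the case $n=3$ is finished, and for $n=4,5$ only the block $\mathcal{B}_2$ remains.

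For $n=4,5$ I would first read off $\mathcal{U}_2$ from Proposition \ref{lem-redu}: for $n=4$ it is a single $2\times2$ symmetric matrix, and for $n=5$ a $4\times4$ symmetric matrix attached to a coupled pair of Fourier modes of the pentagon. The crucial structural point, which I would verify directly from the formula of Proposition \ref{lem-redu} (most conveniently by evaluating $\mathrm{tr}\bigl(\tfrac1\lambda A^{T}D^2U(a)A\bigr)$ as an elementary sum of cosecants and subtracting the translation and Kepler contributions $0$ and $1$), is that the $2\times2$ pieces appearing after diagonalization of $I+\mathcal{U}_2$ have trace exactly $3$. Granting this, a rotation $P$ diagonalizing such a $2\times2$ piece yields the constant symplectic conjugation $\mathrm{diag}(P,P)$, which fixes the $I_2$ and $\pm J_2$ entries of $\mathcal{B}_2$ (because $P$ commutes with $J_2$) and carries $I+\mathcal{U}_2$ to $\mathrm{diag}(\tfrac32-\beta,\tfrac32+\beta)=\mathcal{R}_\beta$. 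Thus $\mathcal{B}_2(\theta)$ is symplectically equivalent to $\mathcal{B}_{\beta_2,\mathfrak{e}}(\theta)$ for an explicit $\beta_2=\beta_2(n)\ge0$ when $n=4$, and to a $\diamond$-sum $\mathcal{B}_{\beta',\mathfrak{e}}(\theta)\diamond\mathcal{B}_{\beta'',\mathfrak{e}}(\theta)$ of two such systems when $n=5$; for $n=4$ the calculation yields $\beta_2(4)=\tfrac{27-12\sqrt2}{14}\approx 0.72$ (equivalently, $I+\mathcal{U}_2$ has eigenvalues $\tfrac{6\sqrt2-3}{7}$ and $\tfrac{24-6\sqrt2}{7}$), and for $n=5$ one gets two comparable explicit values.

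It then remains to check that these parameters lie, for every $\mathfrak{e}\in[0,1)$, in the hyperbolic region $\mathfrak{U}$ of Theorem \ref{hyper}. For a target $\mathfrak{e}$, choosing $\mathfrak{e}_0\in\mathcal{K}$ to be the grid value just below $\mathfrak{e}$ and using $\mathfrak{U}_1(1.36,\mathfrak{e}_0)$, or just above and using $\mathfrak{U}_2(1.36,\mathfrak{e}_0)$, the explicit inequalities defining $\mathfrak{U}_1,\mathfrak{U}_2$ together with the spacing $0.1$ of $\mathcal{K}$ show that the slice of $\mathfrak{U}$ at that $\mathfrak{e}$ is a full interval $\{\,0\le\beta<c(\mathfrak{e})\,\}$ with $c(\mathfrak{e})$ bounded below, uniformly in $\mathfrak{e}\in[0,1)$, by a constant $>1.2$ (the minimum of $c$ occurring near the midpoints of the grid intervals). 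Since $0$, $\beta_2(4)$, and the two pentagon parameters are all comfortably below this envelope, Theorem \ref{hyper} gives the hyperbolicity of the corresponding $\gamma_{\beta,\mathfrak{e}}(2\pi)$, hence of the monodromy of $\mathcal{B}_2$; combined with Theorem \ref{unstable} for $\mathcal{B}_1$, every essential block is hyperbolic, so the regular $n$-gon ERE is hyperbolic for $n=3,4,5$ and all $\mathfrak{e}\in[0,1)$.

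I expect the real difficulty to be the middle step: extracting $\mathcal{U}_2$ from Proposition \ref{lem-redu}, confirming the trace-$3$ normalization that legitimizes the passage to $\mathcal{B}_{\beta,\mathfrak{e}}$, and—for $n=5$—checking that the $4\times4$ block genuinely decouples symplectically into two $\beta$-blocks rather than remaining irreducible; once that is in place only the elementary interval estimate on $\mathfrak{U}$ is left, together with the finitely many numerical verifications already absorbed into Theorem \ref{hyper}. The restriction to $n\le5$ mirrors Moeckel's hyperbolicity result at $\mathfrak{e}=0$: for $n\ge6$ some essential $\mathcal{B}_l$ corresponds to a parameter outside the range reachable by this argument, so the method does not extend.
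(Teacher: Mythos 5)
Your treatment of $n=3$ and $n=4$ is correct and is essentially the paper's argument: $n=3$ has only the block $\mathcal{B}_1$, already handled by Theorem \ref{unstable}, and for $n=4$ the block $I_2+\mathcal{U}_2$ is diagonal with trace $3$, so it is exactly $\mathcal{R}_\beta$ with $\beta=\frac{12-3\sqrt2}{8+2\sqrt2}=\frac{27-12\sqrt2}{14}\approx 0.7164$, which lies under the envelope of $\mathfrak{U}$; your envelope estimate ($c(\mathfrak{e})>1.2$ uniformly, with the minimum near grid midpoints and as $\mathfrak{e}\to1$) is also correct.

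The gap is exactly where you feared: for $n=5$ the $4\times4$ block does \emph{not} decouple symplectically into two $\beta$-systems, and the obstruction is quantitative, not just a matter of finding the right change of basis. The coupling $S_2=\sum_{j=1}^{4}\frac{\sin\theta_{j2}\sin\theta_j}{2d_{5j}^3}\approx 0.2629$ is nonzero, and the identity that actually holds is $\frac{1}{2\lambda}(a_2+b_2-2S_2)=\frac12$, not $\frac{a_2+b_2}{2\lambda}=\frac12$. Consequently the $2\times2$ symmetric pieces of $\mathcal{U}_2$ (in the coordinate pairs coupled by $\pm S_2$) have, after diagonalization, trace $1+\frac{2S_2}{\lambda}\approx 1.38$, so $I_2+(\cdot)$ has trace $\approx 3.38$ and cannot be brought to the form $\mathcal{R}_\beta=\frac32 I_2+\beta\,\mathrm{diag}(-1,1)$; your trace-$3$ normalization fails precisely for the case that matters. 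The paper's route is different: it passes to the self-adjoint operator $\mathcal{A}_2(\mathfrak{e})=-\frac{d^2}{d\theta^2}I_4-2\mathbb{J}_2\frac{d}{d\theta}+\frac{I_4+\mathcal{U}_2}{1+\mathfrak{e}\cos\theta}$ and proves a quadratic-form lower bound: since $S_2>0$, the coupled matrix $\bigl(\begin{smallmatrix}E+G&2F\\-2F&E+G\end{smallmatrix}\bigr)$ dominates the decoupled one obtained by replacing the off-diagonal blocks by the diagonal shift $-2S_2 I_2$ (the difference is $2S_2$ times a projection-like positive semidefinite matrix). This yields $\mathcal{A}_2(\mathfrak{e})\geq\check{\mathcal{A}}_{2,0}(\mathfrak{e})\oplus\check{\mathcal{A}}_{2,0}(\mathfrak{e})$, where $\check{\mathcal{A}}_{2,0}(\mathfrak{e})=\mathcal{F}(\mathfrak{e},\beta)$ with $\beta=\frac{b_2-a_2}{2\lambda}\approx 1.1459$ --- the subtraction of $2S_2$ is exactly what restores the trace to $3$ and puts the lower bound into $\mathcal{R}_\beta$ form. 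Positivity of $\mathcal{F}(\mathfrak{e},1.1459)$ from Theorem \ref{hyper} and Corollary \ref{Hy-Po2} then gives positivity of $\mathcal{A}_2(\mathfrak{e})$ on $\bar{D}_4(\omega,2\pi)$ for all $\omega\in\mathbb{U}$, which rules out unit-circle eigenvalues of $\eta_{2,\mathfrak{e}}(2\pi)$. To repair your proof you need this comparison (or an equivalent perturbation argument handling $S_2$); the exact-decoupling step as stated would fail.
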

\begin{rem}
In \cite{HLS} and \cite{HO}, the authors has already established the hyperbolicity of the regular $n$-gon ERE for $n = 3$ and $n = 4$, respectively. However, for $n = 5$, the problem remains open and presents greater complexity than the $n = 3, 4$ cases. In fact, the hyperbolicity of the regular $3$-gon is equivalent to the hyperbolicity of $\gamma_{\beta,\mathfrak{e}}(2\pi)$ in region $(\beta,\mathfrak{e})\in\{0\}\times[0,1)$, this non-trivial analytic result was first obtained by \cite{HLS}. In \cite{O}, the first author expands the hyperbolic region for $(\beta,\mathfrak{e})\in[0,0.5)\times[0,1)$. Further, in \cite{HO}, Hu and the first author further extended the hyperbolic region for $(\beta,\mathfrak{e})\in[0,0.7237)\times[0,1)$
, this implies the hyerpbolicity of the regular $4$-gon ERE for any eccentricity, but this result is not enough to obtain the hyperbolicity of the
regular $5$-gon ERE, hence it is still an unsolved problem. In order to get the hyperbolicity of the regular $5$-gon ERE, we will later see in Section $5$, that we need to establish hyperbolicity for $(\beta,\mathfrak{e})\in[0, 1.1459)\times[0,1)\subset \mathfrak{U}$.
\end{rem}
\begin{rem}
Notably, when $n\geq6$, as shown by Moeckel \cite{Moe1} the regular $n$-gon is not hyperbolic even at eccentricity $\mathfrak{e}=0$. Some subsystems $\mathcal{B}_{l}$ within its essential subsystem decomposition are stable. Consequently, for regular regular $n$-gon with $n\geq6$, one can only conclude spectral instability but not the hyperbolic for
any $\mathfrak{e}\in[0,1)$.
\end{rem}

This paper is organized as follows. In Section $2$, we explain the reduction of ERE and prove Theorem 1.1.
In Section $3$, we prove Theorem $1.2$, that is give the expression of the regular $n$-gon ERE under the reduction coordinate of Theorem $1.1$.
In Section $4$, we study the spectral unstable of the regular $n$-gon ERE and prove
Theorem \ref{unstable}. In Section $5$, we will prove Theorem \ref{hyper}, where we introduce the $\beta$-system and developed
an estimation method that, by testing the hyperbolicity at a finite number of points alone, allows us to
determine the hyperbolicity of this system for arbitrary eccentricities. As a corollary, we provide a unified proof of the hyperbolicity of the regular $n$-gon for $n=3,4,5$.

\section{Reduction of the elliptic relative equilibria}\label{Se2}
Considering $n$ particles with masses $m_1,...,m_n$, let $Q=(q_1,...,q_n)\in (\R^2)^n$ be
the position vector, and $P=(p_1,...,p_n)\in (\R^2)^n$ be the momentum vector. Denote by $d_{ij}=||q_{i}-q_{j}||$,
the Hamiltonian function has the form
\bea
H(P,Q)=\sum_{j=1}^n\frac{||p_{j}||^2}{2m_{j}}-U(Q),\ \ U(Q)=\sum_{1\leq j<i\leq n}\frac{m_{j}m_{i}}{d_{ji}}.\nonumber
\eea
We denote by $\mathbb{J}_n=diag(J_2,...,J_2)_{2n\times2n} $ and  $\mathcal{M}=diag (m_1,m_1,m_2,m_2,...,m_n, m_n)_{2n\times2n}$.
Then the corresponding fundamental solution $\ga$ of periodic solution $(P(t), Q(t))$ is given by
\bea \dot{\ga}(t)=J_{4n}D^2H(P(t),Q(t))\ga(t),\,\ \ga(0)=I_{4n}. \label{ga}  \eea
where
\bea
D^2H(P(t),Q(t))=\left(\begin{array}{cccc} \mathcal{M}^{-1} & O \\
O & -D^2U(Q(t))
\end{array}\right),\nonumber
\eea
$O$ is the zero matrix.

The periodic solution $Q(t)$ with minimal periodic $\mathcal{T}$ is called spectrally stable if all eigenvalues of $\gamma(\mathcal{T})$ belong to the unit
circle $\mathbb{U}$ of the complex plane. $Q(t)$ is called linearly stable if $\gamma(\mathcal{T})$ is spectrally stable and semi-simple. While $Q(t)$ is called hyperbolic if no eigenvalues of $\gamma(\mathcal{T})$ are on $\mathbb{U}$. For the ERE, from Meyer and Schmidt \cite{MS}, there are two four-dimensional invariant symplectic subspaces, $E_1$ and $E_2$, and they are associated to the translation symmetry, dilation and rotation symmetry of the system. In other words, there is a symplectic coordinate system in which the linearized system of the planar n-body problem decouples into three subsystems on $E_1, E_2$ and $E_3=(E_1\cup E_2)^{\bot}$, where $\bot$ denotes the symplectic orthogonal complement.
Due to the symmetry, $\gamma(\mathcal{T})$ restricted to $E_1\cup E_2$ give the characteristic multiplier $+1$ with multiplicity of $8$, hence the ERE is called spectral stable (linearly stable, hyperbolic, resp.) if the monodromy matrix $\gamma(\mathcal{T})$ restricted to $E_3$, $\gamma(\mathcal{T})|_{E_3}$ is spectral stable
(linearly stable, hyperbolic, resp.).


In the following, we analysis the linear Hamiltonian sytem (\ref{ga}) directly, by using
the linear symplectic transformations, we give a nice form of the linear Hamiltonian system of ERE, see Theorem \ref{red-ERE},
in this nice form, we can easily see how the symmetry affects this system.

For the elliptic relative equilibrium (\ref{ERE})
$$
x(t)=r(t)\mathfrak{R}(\theta(t))a
$$
which generated by the cental configuration $a$, where $ r(t)=\frac{\Omega^2/\lambda}{1+e\cos\theta(t)}$. Consider the linear Hamiltonian system (\ref{ga}) at $(P(t), Q(t))$ with $P(t)=\mathcal{M}\dot{Q}(t), Q(t)=x(t)$.
Based on the relation $r^{2}\dot{\theta}(t)=\Omega\neq0$, we first change variable $t$ to the true anomaly $\theta$.
Without loss of generality, we assume $\Omega>0$ and the initial condition $\theta(0)=0$, then $\theta(\mathcal{T})=2\pi$, where $\mathcal{T}$ is the minimal period of $x(t)$.

Now define $\tilde{\gamma}(\theta)=\gamma(t(\theta))$, $\tilde{r}(\theta)=r(t(\theta))=\frac{\Omega^2/\lambda}{1+\mathfrak{e}\cos\theta}$,
the derivative corresponding to $\theta$ is denoted by $'$, then direct computation shows that
\bea
\tilde{\gamma}'(\theta)=J_{4n}\tilde{B}(\theta)\tilde{\gamma}(\theta),\ \ \tilde{\gamma}(0)=I_{4n},\label{timechange}
\eea
where
$$
\tilde{B}(\theta)=\left(\begin{array}{cccc} \frac{\tilde{r}^2}{\Omega}\mathcal{M}^{-1} & O \\
O & -\frac{\tilde{r}^2}{\Omega}D^2U(Q(t(\theta)))
\end{array}\right).
$$
We can further simplify $\tilde{B}(\theta)$, since for any $z\in\mathbb{R}^{2n}$, $U(\tilde{r}\mathfrak{R}(\theta)z)=U(z)/\tilde{r}$, we have
\bea
\frac{\partial^{2}U(\tilde{r}\mathfrak{R}(\theta)z)}{\partial z^2}\Big{|}_{z=a}=\frac{D^2U(a)}{r},\label{drU1}
\eea
on the other hand, direct computation shows that
\bea
\frac{\partial^{2}U(\tilde{r}\mathfrak{R}(\theta)z)}{\partial z^2}\Big{|}_{z=a}=\tilde{r}^2\mathfrak{R}^{T}(\theta)D^{2}U(\tilde{r}\mathfrak{R}(\theta)a)\mathfrak{R}(\theta),\label{drU2}
\eea
compare equalities (\ref{drU1}) and (\ref{drU2}), for $Q(t(\theta))=r\mathfrak{R}(\theta)a$, we have
$$
D^{2}U(Q(t(\theta))=\frac{1}{\tilde{r}^3}\mathfrak{R}^{-T}(\theta)D^2U(a)\mathfrak{R}^{-1}(\theta),
$$
hence
\bea \label{B1}
\tilde{B}(\theta)=\left(\begin{array}{cccc} \frac{\tilde{r}^2}{\Omega}\mathcal{M}^{-1} & O \\
O & -\frac{\mathfrak{R}^{-T}(\theta)D^2U(a)\mathfrak{R}^{-1}(\theta)}{\tilde{r}\Omega}
\end{array}\right).\nonumber
\eea
In order to get Theorem \ref{red-ERE}, we introduce the following lemma to make further transformation,
\begin{lem}\label{symptran}
Let $S(\theta)$ be a path of symplectic matrices depending on $\theta$, if $\hat{\gamma}(\theta)=S^{-1}(\theta)\tilde{\gamma}(\theta)S(0)$, where
$\tilde{\gamma}(\theta)$ is given by (\ref{timechange}) then
$$
\hat{\gamma}'(\theta)=J\Psi_{S}(\tilde{B})\hat{\gamma}(\theta),\ \ \hat{\gamma}(0)=I_{4n}
$$
where $\Psi_{S}(\tilde{B})=S^{T}(\theta)\tilde{B}(\theta)S(\theta)+JS^{-1}(\theta)S'(\theta)$.
\end{lem}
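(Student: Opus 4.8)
This is a standard but genuinely useful computation describing how a linear Hamiltonian system transforms under a time‑dependent symplectic change of frame; the plan is to differentiate the defining relation $\hat{\gamma}(\theta)=S^{-1}(\theta)\tilde{\gamma}(\theta)S(0)$ directly, substitute the equation for $\tilde{\gamma}$, and then use the symplectic condition on $S$ to collect the result into the asserted form $J\Psi_{S}(\tilde{B})$.

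First I would record the two elementary ingredients. Differentiating $S^{-1}(\theta)S(\theta)=I_{4n}$ gives $(S^{-1})'(\theta)=-S^{-1}(\theta)S'(\theta)S^{-1}(\theta)$. Since $S(\theta)\in\Sp(4n)$ satisfies $S^{T}(\theta)JS(\theta)=J$ and $J^{2}=-I_{4n}$, one has $S^{-1}(\theta)=-JS^{T}(\theta)J$, equivalently
\[
S^{-1}(\theta)J=JS^{T}(\theta).
\]

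Next I would compute. Using the product rule together with $\tilde{\gamma}'(\theta)=J\tilde{B}(\theta)\tilde{\gamma}(\theta)$,
\[
\hat{\gamma}'(\theta)=(S^{-1})'\tilde{\gamma}S(0)+S^{-1}\tilde{\gamma}'S(0)
=-S^{-1}S'S^{-1}\tilde{\gamma}S(0)+S^{-1}J\tilde{B}\tilde{\gamma}S(0).
\]
Because $\hat{\gamma}=S^{-1}\tilde{\gamma}S(0)$ yields $\tilde{\gamma}S(0)=S\hat{\gamma}$, this becomes
\[
\hat{\gamma}'(\theta)=-S^{-1}S'\hat{\gamma}+S^{-1}J\tilde{B}S\hat{\gamma}
=\bigl(S^{-1}J\tilde{B}S-S^{-1}S'\bigr)\hat{\gamma}.
\]
Now substituting $S^{-1}J=JS^{T}$ in the first term and writing $-S^{-1}S'=J\bigl(JS^{-1}S'\bigr)$ (again using $J^{2}=-I_{4n}$) in the second, so that $J$ factors on the left,
\[
\hat{\gamma}'(\theta)=\bigl(JS^{T}\tilde{B}S+J(JS^{-1}S')\bigr)\hat{\gamma}
=J\bigl(S^{T}\tilde{B}S+JS^{-1}S'\bigr)\hat{\gamma}=J\Psi_{S}(\tilde{B})\hat{\gamma}.
\]
The initial condition is immediate: $\hat{\gamma}(0)=S^{-1}(0)\tilde{\gamma}(0)S(0)=S^{-1}(0)S(0)=I_{4n}$.

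There is no real obstacle here; the only point demanding care is the bookkeeping of the symplectic identities. It is worth noting in passing—as a consistency check, though not needed for the statement—that $\Psi_{S}(\tilde{B})$ is symmetric whenever $\tilde{B}$ is: the term $S^{T}\tilde{B}S$ is manifestly symmetric, and differentiating $S^{T}JS=J$ gives $(S')^{T}JS+S^{T}JS'=0$, from which $JS^{-1}S'=S^{T}JS'$ is symmetric as well. Hence $J\Psi_{S}(\tilde{B})$ is an infinitesimally symplectic coefficient matrix, as it must be for $\hat{\gamma}(\theta)$ to remain a path in $\Sp(4n)$.
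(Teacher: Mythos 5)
Your computation is correct and is precisely the ``direct calculation'' that the paper leaves to the reader: differentiating $\hat{\gamma}=S^{-1}\tilde{\gamma}S(0)$, substituting $\tilde{\gamma}'=J\tilde{B}\tilde{\gamma}$, and using $S^{-1}J=JS^{T}$ together with $J^{2}=-I$ to factor out $J$ on the left. The closing remark that $JS^{-1}S'=S^{T}JS'$ is symmetric is a nice sanity check confirming that the transformed system is again linear Hamiltonian.
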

\begin{proof}
The proof is obtained by direct calculation, and we leave it to the readers.
\end{proof}
Now we take
$$
S(\theta)=\left(\begin{array}{cccc} \frac{\sqrt{\Omega}}{\tilde{r}}A^{-T} & O \\
O & -\frac{\tilde{r}}{\sqrt{\Omega}}A
\end{array}\right)\left(\begin{array}{cccc} I & \frac{\tilde{r}'}{\tilde{r}}I \\
O & I
\end{array}\right),
$$
where $A\in GL(\R^{2n})$ satisfies
$$
\mathbb{J}_{n}A = A\mathbb{J}_{n},\ \ A^T\mathcal{M}A = I_{2n}.
$$
Let $\hat{\gamma}(\theta)=S^{-1}(\theta)\tilde{\gamma}(\theta)S(0)$, from Lemma \ref{symptran}, we have
\bea
\hat{\gamma}'(\theta)=J\Psi_{S}(\tilde{B})\hat{\gamma}(\theta),\ \ \hat{\gamma}(0)=I_{4n}.\nonumber
\eea
where
\bea
\Psi_{S}(\tilde{B})=\left( \begin{array}{cccc} I_{2n} & O \\
O & [(\frac{\tilde{r}'(\theta)}{r(\theta)})'-(\frac{r'(\theta)}{r(\theta)})^2]I_{2n}-\frac{\tilde{r}(\theta)}{\Omega^2}A^T\mathfrak{R}^{-T}(\theta)D^2U(a)\mathfrak{R}^{-1}(\theta)A\nonumber
\end{array}\right).
\eea
Since $\mathbb{J}_{n}A = A\mathbb{J}_{n}$, we have $A\mathfrak{R}(\theta)=\mathfrak{R}(\theta)A$, then combine with following relations
$$
\tilde{r}(\theta)=\frac{\Omega^2/\lambda}{1+\mathfrak{e}\cos\theta},\ \ \tilde{r}'(\theta)=\frac{\mathfrak{e}\sin\theta}{1+\mathfrak{e}\cos\theta}\tilde{r}(\theta),
$$
direct computation shows that
\bea
\Psi_{S}(\tilde{B})=\left( \begin{array}{cccc} I_{2n} & O \\
O & I_{2n}-\frac{I_{2n}+\frac{1}{\lambda}\mathfrak{R}^{-T}(\theta)A^{-1}\mathcal{M}^{-1}D^2U(a)A\mathfrak{R}^{-1}(\theta)}{1+\mathfrak{e}\cos\theta}\nonumber
\end{array}\right).
\eea
Define $\zeta_{\mathfrak{e}}(\theta)=\left(
                            \begin{array}{cc}
                              \mathfrak{R}^{-1}(\theta) & O \\
                              O & \mathfrak{R}^{-1}(\theta)\\
                            \end{array}
                          \right)\hat{\gamma}(\theta)
$ and combine with $A^T\mathcal{M}A = I_{2n}$, then we have
\bea
\zeta'_{\mathfrak{e}}(\theta)=J_{4n}\left( \begin{array}{cccc} I_{2n} & -\mathbb{J}_{n} \\
\mathbb{J}_{n} & I_{2n}-\frac{I_{2n}+\frac{1}{\lambda}A^{T}D^2U(a)A}{1+\mathfrak{e}\cos\theta}
\end{array}\right)\zeta_{\mathfrak{e}}(\theta),\ \ \zeta_{\mathfrak{e}}(0)=I_{2n}.\nonumber
\eea
This completes the proof of Theorem 1.1.
\begin{rem} From the periodicity of $S(\theta)$, that is $S(2\pi)=S(0)$, we have $\tilde{\gamma}(2\pi)=S(0)\zeta_{\mathfrak{e}}(2\pi)S^{-1}(0)$, hence $\tilde{\gamma}(2\pi)$
 and $\zeta_{\mathfrak{e}}(2\pi)$ have the same spectrum, the stability of ERE is also determined by $\zeta_{\mathfrak{e}}(2\pi)$.
\end{rem}
Based on the nice form of linear Hamiltonian system in Theorem 1.1, we
can easily see how the symmetry contribute the characteristic multiplier $+1$ of the monodromy matrix $\zeta_{\mathfrak{e}}(2\pi)$ and affects the system.

\textbf{Translation symmetry:} Let $e_{1}=(1,0,1,0,\ldots,1,0)^{T}$, for the planar central configuration $a$, we have
$$
U(a+se_{1})=U(a), \nabla U(a+se_{1})=\nabla U(a),\ \ \forall s\in \mathbb{R}.
$$
then we have $\frac{\partial}{\partial s}\nabla U(a+se_{1})|_{s=0}=0$, which implies
\bea
D^{2}U(a)e_{1}=0. \label{eig1}
\eea
Similar, for $\mathbb{J}_{n}e_{1}=(0,1,0,1,\cdots,0,1)$, we have
\bea
D^{2}U(a)\mathbb{J}_{n}e_{1}=0. \label{eig2}
\eea
hence $e_{1}, \mathbb{J}_{n}e_{1}$ are two eigenvectors of $\frac{1}{\lambda}\mathcal{M}^{-1}D^2U(a)$ corresponding to eigenvalue $0$.

\textbf{Rotation symmetry:} Let $\mathfrak{R}(s)=diag(R(s),\ldots, R(s)), s\in \mathbb{R}$, if $a$ is a planar central configuration, then
$\mathfrak{R}(s)a$ is also a planar central configuration, from the of central configuration equation (\ref{CC}), we have
$$
\nabla U(\mathfrak{R}(s)a)=-\lambda\mathcal{M}\mathfrak{R}(s)a,
$$
Taking the derivative of $s$ at $0$ on both sides and from the fact $\frac{\partial\mathfrak{R}(s)}{\partial s}\big{|}_{s=0}=\mathbb{J}_{n}$, we get
\bea
\frac{1}{\lambda}\mathcal{M}^{-1}D^2U(a)\mathbb{J}_{n}a=-\mathbb{J}_{n}a. \label{eig3}
\eea
hence $\mathbb{J}_{n}a$ is an eigenvector of $\frac{1}{\lambda}\mathcal{M}^{-1}D^2U(a)$ corresponding to eigenvalue $-1$.

\textbf{Dilation symmetry:} For planar central configuration $a$ and any $s\in \mathbb{R}$, we have $\nabla U(sa)=\frac{1}{s^2}\nabla U(a)$.
Taking the derivative of $s$ at $1$ on both sides, we obtain
$$
D^2U(a)a=-2\nabla U(a),
$$
from the central configuration equation (\ref{CC}), we have
$$
D^2U(a)a=2\lambda\mathcal{M}a,
$$
hence
\bea
\frac{1}{\lambda}\mathcal{M}^{-1}D^2U(a)a=2a, \label{eig4}
\eea
$a$ is an eigenvector of $\frac{1}{\lambda}\mathcal{M}^{-1}D^2U(a)$ corresponding to eigenvalue $2$.

In order to reduce the symmetry, the matrix $A$ which satisfies (\ref{A}) is denoted by
\bea
A=\left( \begin{array}{cccc}
A_{11} & A_{12} & \cdots & A_{1n} \\
A_{21} & A_{22} & \cdots & A_{2n}\\
\cdots & \cdots & \cdots & \cdots\\
A_{n1} & A_{n2} & \cdots & A_{nn}\end{array}\right), \nonumber
\eea
the first four columns of matrix $A$ is constructed by
\bea
\left( \begin{array}{c}A_{11} \\ A_{21} \\\vdots \\A_{n1}\end{array}\right)
=\frac{1}{\sqrt{m}}(e_{1}, \mathbb{J}_{n}e_{1}), \ \
\left( \begin{array}{c}A_{12} \\ A_{22} \\\vdots \\A_{n2}\end{array}\right)
=\frac{1}{\sqrt{\cal{I}(a)}}(a,\mathbb{J}_{n}a),\nonumber
\eea
then from (\ref{eig1}), (\ref{eig2}) of the translation symmetry, (\ref{eig3}) of rotation symmetry and (\ref{eig4}) of dilation symmetry, we have
\bea
\frac{1}{\lambda}A^{-1}\mathcal{M}^{-1}D^2U(a)A=diag(O_{2}, \mathcal{U}_{0}, \mathcal{U}),\label{redu}
\eea
where
$$
O_{2}=\left(
        \begin{array}{cc}
          0 & 0 \\
          0 & 0 \\
        \end{array}
      \right),\ \
\mathcal{U}_{0}=\left(
                         \begin{array}{cc}
                           2 & 0 \\
                           0 & -1 \\
                         \end{array}
                       \right)
$$
corresponding the translation symmetry, dilation and rotation symmetry respectively. $\mathcal{U}$ is the essential part which reflects the stability of ERE, hence
the linear Hamiltonian system (\ref{EREH})
$$
\zeta'_{\mathfrak{e}}(\theta)=J_{4n}B(\theta)\zeta_{\mathfrak{e}}(\theta), \zeta_{\mathfrak{e}}(0)=I_{4n},
$$
can be decomposed to three subsystems with $B(\theta)=B_{1}(\theta)\diamond B_{2}(\theta)\diamond B_{3}(\theta)$, where
$$
B_{1}(\theta)=\left( \begin{array}{cccc} I_{2} & -\mathbb{J}_{1} \\
\mathbb{J}_{1} & I_{2}-\frac{I_{2}}{1+\mathfrak{e}\cos\theta}
\end{array}\right),\ \
B_{2}(\theta)=\left( \begin{array}{cccc} I_{2} & -\mathbb{J}_{1} \\
\mathbb{J}_{1} & I_{2}-\frac{I_{2}+\mathcal{U}_{0}
}{1+\mathfrak{e}\cos\theta}
\end{array}\right),\\
B_{3}(\theta)=\left( \begin{array}{cccc} I_{2n-4} & -\mathbb{J}_{n-2} \\
\mathbb{J}_{n-2} & I_{2n-4}-\frac{I_{2n-4}+\mathcal{U}
}{1+\mathfrak{e}\cos\theta}
\end{array}\right),
$$
%
%
%
Let $\eta_\mathfrak{e}(\theta)$ be the fundamental solution of $B_3$, that is
\bea \eta'_\mathfrak{e}(\theta)=JB_3(\theta)\eta_\mathfrak{e}(\theta),\ \ \eta_\mathfrak{e}(0)=I_{2n}. \label{ga3}\nonumber
\eea
The ERE is called spectral stable (linearly stable, hyperbolic, resp.), if the $\eta_\mathfrak{e}(2\pi)$ is spectral stable
(linearly stable, hyperbolic, resp.). In order to study the regular $n$-gon ERE, we should construct the remaining
column vectors in matrix $A$, we will give the construction in following section.

\section{Reduction of the regular $n$-gon ERE}\label{Se3}

Let $a=(x^T_{1},...x^T_{n})^T$ be the position vector of the $n$-gon central
configuration with $x_{k}=(\cos\theta_{k},\sin\theta_{k})^T$, where
$\theta_{k}=\frac{2\pi k}{n}, k\in\{1,2,...n\}$ and let $\mathcal{M}=I_{2n}$, i.e all the particle masses are assumed to be one.
Define
\bea
v(1)&=&(\cos2\theta_{1},\sin2\theta_{1},\cdots,\cos2\theta_{n},\sin2\theta_{n})^T,\nonumber \\
v(l)&=&(v_{1l}, \cdots , v_{nl})^T,w(l)=(w_{1l},\cdots,w_{nl})^T, \nonumber \\
v_{kl}&=&\cos \theta_{kl}\cdot(\cos \theta_{k},\sin \theta_{k}),\ \
w_{kl}=\sin \theta_{kl}\cdot(\cos \theta_{k},\sin \theta_{k}).\nonumber
\eea
%
Direct computations show that
\bea
v(1)^T\mathcal{M}v(1)=n,\ \
v(l)^T\mathcal{M}v(l)=\frac{n}{2},\ \ w(l)^T\mathcal{M}w(l)=\frac{n}{2}. \nonumber
\eea
Then we normalize this vectors as follows,
\bea
\frac{1}{\sqrt{n}}v(1),\ \
\sqrt{\frac{2}{n}}v(l),\ \ \sqrt{\frac{2}{n}}w(l).\nonumber
\eea
Now we construct the matrix $A$ as follow,
\bea\label{matrix A}
A=\left( \begin{array}{cccc}
A_{11} & A_{12} & \cdots & A_{1n} \\
A_{21} & A_{22} & \cdots & A_{2n}\\
\cdots & \cdots & \cdots & \cdots\\
A_{n1} & A_{n2} & \cdots & A_{nn}\end{array}\right),
\eea
where each $A_{ij}$ is defined by
\bea
A_{cen}=\left( \begin{array}{c}A_{11} \\ A_{21} \\\vdots \\A_{n1}\end{array}\right)
=\frac{1}{\sqrt{n}}(e_{1}, \mathbb{J}_{n}e_{1}), \ \
A(0)=\left( \begin{array}{c}A_{12} \\ A_{22} \\\vdots \\A_{n2}\end{array}\right)
=\frac{1}{\sqrt{n}}(a,\mathbb{J}_{n}a),\ \
A(1)=\left( \begin{array}{c}A_{13} \\ A_{23} \\\vdots \\A_{n3}\end{array}\right)
=\frac{1}{\sqrt{n}}(v(1),\mathbb{J}_{n}v(1)),\nonumber
\eea
\bea
A(l)=\left( \begin{array}{cc}A_{1(2l)} & A_{1(2l+1)} \\ A_{2(2l)} & A_{2(2l+1)} \\ \vdots & \vdots \\A_{n(2l)} &
A_{n(2l+1)}\end{array}\right)
&=&\sqrt{\frac{2}{n}}(v(l),\mathbb{J}_{n}v(l),w(l),\mathbb{J}_{n}w(l)),\ \
2\leq l\leq [\frac{n-1}{2}]\ \nonumber \\
A(\frac{n}{2})=\left( \begin{array}{c}A_{1n} \\ A_{2n} \\ \vdots  \\A_{nn} \end{array}\right)
&=&\sqrt{\frac{2}{n}}(v(\frac{n}{2}),\mathbb{J}_{n}v(\frac{n}{2})),\ \ \mathrm{if}\ \ n\in 2\mathbb{N}, \label{3.24}\nonumber
\eea
where $[x]=\max\{k\,|\, k\leq x, k\in\mathbb{N}\}$. Then the matrix $A$ satisfies $A^T\mathcal{M}A=I_{2n}$ and $A\mathbb{J}_{n}=\mathbb{J}_{n}A$ as required in (\ref{A}). Based on
this matrix $A$, we have following lemma
\begin{prop}\label{lem-redu}
By constructing matrix $A$ as above, we obtain
\bea
A^{-1}\frac{1}{\lambda}\mathcal{M}^{-1}D^2U(a)A=diag(O_{2},\mathcal{U}_{0},\mathcal{U}_{1},...,\mathcal{U}_{l},...,\mathcal{U}_{[\frac{n}{2}]}),\nonumber
\eea
where
\bea
\mathcal{U}_{0}=\left(\begin{array}{cc}2 & 0\\
0 & -1
\end{array}\right),\ \
\mathcal{U}_{1}=\frac{1}{\lambda}\left(\begin{array}{cc}z & 0\\
0 & z
\end{array}\right),\nonumber
\eea
\bea
\mathcal{U}_{l}=\frac{1}{\lambda}\left(\begin{array}{cccc}a_{l} & 0 & 0 & S_{l}\\
0 & b_{l} & -S_{l} & 0\\ 0 & -S_{l} & a_{l} & 0\\
S_{l} & 0 & 0 & b_{l}\end{array}\right),\ \ \begin{array}{c} 2\leq l\leq [\frac{n-1}{2}]
\end{array},\nonumber
\eea
\bea
\mathcal{U}_{[\frac{n}{2}]}=\frac{1}{\lambda}\left(\begin{array}{cc}
P_{\frac{n}{2}}-3Q_{\frac{n}{2}} & 0 \\ 0 & P_{\frac{n}{2}}+3Q_{\frac{n}{2}}\end{array}\right),\nonumber
 \ \      \mathrm{if}\ \ n\in 2\mathbb{N},
\eea
with
\bea
z=\sum_{j=1}^{n-1}\frac{1}{2d^{3}_{nj}}(1-\cos2\theta_{j}),\ \ a_{l}=P_{l}-3Q_{l},\ \ b_{l}=P_{l}+3Q_{l},\ \ \lambda=\frac{1}{4}\sum_{j=1}^{n-1}\csc \frac{\pi j}{n}.\nonumber
\eea
\bea
P_{l}=\sum_{j=1}^{n-1}\frac{1-\cos \theta_{jl}\cos \theta_{j}}{2d_{nj}^3},\ \
S_{l}=\sum_{j=1}^{n-1}\frac{\sin \theta_{jl}\sin \theta_{j}}{2d_{nj}^3},\ \
Q_{l}=\sum_{j=1}^{n-1}\frac{\cos \theta_{j}-\cos \theta_{jl}}{2d_{nj}^3}.\nonumber
\eea
\end{prop}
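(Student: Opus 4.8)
The plan is to write $D^2U(a)$ block by block, observe that it is equivariant for the symmetry group of the regular $n$-gon, and deduce that conjugation by the orthogonal matrix $A$ of (\ref{matrix A}) produces the claimed block-diagonal form; the block sizes are dictated by the symmetry, and the entries are then read off from explicit trigonometric sums. Here $\mathcal M=I_{2n}$, so (\ref{A}) says $A^{-1}=A^T$ and $A\mathbb J_n=\mathbb J_n A$, and it suffices to understand $\frac1\lambda A^TD^2U(a)A$. First I would record the Hessian of the Newtonian pair potential: for $q_i,q_j\in\mathbb R^2$ with $r=q_i-q_j$, $d=\|r\|$,
\[
\frac{\partial^2}{\partial q_i\partial q_j}\frac1d=\frac1{d^3}\Big(I_2-\frac{3rr^T}{d^2}\Big)=-\frac{\partial^2}{\partial q_i^2}\frac1d\qquad(i\ne j).
\]
Writing $a=(x_1,\dots,x_n)$ with $x_k=(\cos\theta_k,\sin\theta_k)$ and $x_i-x_j=d_{ij}\widehat r_{ij}$, and setting $T(\phi_{ij})=2\widehat r_{ij}\widehat r_{ij}^T-I_2$ (the reflection through the line $\mathbb R\widehat r_{ij}$, a symmetric trace-zero involution), the $(i,j)$-block of $D^2U(a)$ for $i\ne j$ equals $-\frac1{2d_{ij}^3}\big(I_2+3T(\phi_{ij})\big)$, and each diagonal block is minus the sum of the other blocks in its block-row. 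Because $x_{k+1}=R(\tfrac{2\pi}n)x_k$ one has $d_{i+1,j+1}=d_{ij}$ and $T(\phi_{i+1,j+1})=R(\tfrac{2\pi}n)T(\phi_{ij})R(\tfrac{2\pi}n)^T$, which gives the equivariance $D^2U(a)\,\tau=\tau\,D^2U(a)$ for the twisted cyclic shift $\tau:(z_1,\dots,z_n)\mapsto(Rz_n,Rz_1,\dots,Rz_{n-1})$, together with the analogous equivariances under the reflection generators of the $n$-gon.

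Second, I would observe that the columns of $A$ form an orthonormal system adapted to this symmetry. Identifying $\mathbb R^2\cong\mathbb C$ (so $\mathbb J_n$ is multiplication by $i$ and $R^k$ by $\omega^k$, $\omega=e^{2\pi i/n}$), the $k$-th block of $v(l)$ is $\cos(l\theta_k)\,\omega^k=\tfrac12(\omega^{k(l+1)}+\omega^{k(1-l)})$, and likewise for $w(l)$; consequently each of the column groups $(e_1,\mathbb J_n e_1)$, $(a,\mathbb J_n a)$, $(v(1),\mathbb J_n v(1))$, $(v(l),\mathbb J_n v(l),w(l),\mathbb J_n w(l))$ for $2\le l\le[\tfrac{n-1}2]$, and $(v(\tfrac n2),\mathbb J_n v(\tfrac n2))$ when $n$ is even, spans a $\tau$-invariant subspace; since $D^2U(a)$ commutes with $\tau$ and is symmetric, it leaves each of these subspaces invariant, so $\frac1\lambda A^TD^2U(a)A$ is block diagonal with exactly the stated block sizes, and the vanishing of the off-diagonal blocks reduces to the orthogonality of the trigonometric systems $\{\cos(l\theta_k)\}$, $\{\sin(l\theta_k)\}$ over the $n$ vertices. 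Two degenerate endpoints require separate handling: the ``natural'' mode-$1$ vector, with $k$-th block $\cos\theta_k\,x_k=\tfrac12(e_1)_k+\tfrac12(v(1))_k$, already contains the translation direction $e_1$, so the genuinely new direction retained in $A$ is $v(1)$ itself; and when $n$ is even $w(\tfrac n2)=0$ because $\sin(\tfrac n2\theta_k)=\sin(\pi k)=0$, which is why $\mathcal U_{n/2}$ is only $2\times2$.

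Third, I would compute the surviving diagonal blocks. The blocks $O_2$ and $\mathcal U_0=\mathrm{diag}(2,-1)$ are exactly (\ref{eig1})--(\ref{eig4}) from Section \ref{Se2} (translation $\to$ eigenvalue $0$; dilation $\to 2$; rotation $\to -1$). In the complex picture $T(\phi_{ij})$ is the conjugate-linear map $z\mapsto -\omega^{\,i+j}\bar z$ (since $\phi_{ij}=\tfrac{\theta_i+\theta_j}2+\tfrac\pi2$), and a direct evaluation of $D^2U(a)v(1)$ collapses, after reindexing $j=m-p$ and using $\sum_{p}d_{np}^{-3}(\omega^p-\omega^{-p})=0$, to $z\,v(1)$ with $z=\sum_{j=1}^{n-1}\frac{1-\cos2\theta_j}{2d_{nj}^3}$; the same computation on $\mathbb J_n v(1)$ gives $z\,\mathbb J_n v(1)$, hence $\mathcal U_1=\tfrac1\lambda z\,I_2$. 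For $2\le l\le[\tfrac{n-1}2]$ the same scheme applied to $v(l),\mathbb J_n v(l),w(l),\mathbb J_n w(l)$, after sorting the resulting sums over $j=1,\dots,n-1$ by the parity of their $\cos$ and $\sin$ factors, yields
\[
P_l=\sum_{j=1}^{n-1}\frac{1-\cos\theta_{jl}\cos\theta_j}{2d_{nj}^3},\quad Q_l=\sum_{j=1}^{n-1}\frac{\cos\theta_j-\cos\theta_{jl}}{2d_{nj}^3},\quad S_l=\sum_{j=1}^{n-1}\frac{\sin\theta_{jl}\sin\theta_j}{2d_{nj}^3},
\]
with $a_l=P_l-3Q_l$, $b_l=P_l+3Q_l$; the skew entries $\pm S_l$ arise from $T(\phi_{nj})$ coupling $v(l)$ with $\mathbb J_n w(l)$ and $w(l)$ with $\mathbb J_n v(l)$, and when $n$ is even $S_{n/2}=0$, giving $\mathcal U_{n/2}=\tfrac1\lambda\,\mathrm{diag}(P_{n/2}-3Q_{n/2},P_{n/2}+3Q_{n/2})$. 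Finally $\lambda=U(a)/\mathcal I(a)$, and since $\mathcal I(a)=\sum_k\|x_k\|^2=n$ and $U(a)=\tfrac n2\sum_{j=1}^{n-1}d_{nj}^{-1}$ with $d_{nj}=2\sin(\pi j/n)$, we get $\lambda=\tfrac14\sum_{j=1}^{n-1}\csc(\pi j/n)$.

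I expect the main obstacle to be this last step, that is, keeping the trigonometric bookkeeping honest: (a) verifying that \emph{every} off-block-diagonal entry of $A^TD^2U(a)A$ vanishes, where one must check that both the circulant ``linear'' part and the twisted ``conjugate-linear'' part $T(\phi_{ij})$ of $D^2U(a)$ respect the mode decomposition so that mode $l$ never leaks into mode $l'\ne l$; and (b) pinning down the signs inside the generic block $\mathcal U_l$, since it is $T(\phi_{nj})$ that entangles its four basis vectors in the displayed skew pattern. The endpoint cases $l=1$ and $l=n/2$ have to be treated by hand rather than absorbed into the generic formula.
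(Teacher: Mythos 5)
Your proposal is correct in substance and its computational core coincides with the paper's: the same decomposition of the off-diagonal blocks $U_{ij}$ into a scalar multiple of $I_2$ plus a (reflection-type) conjugate-linear part, the same passage to complex modes $\omega^{kl}$, the same reindexing of the sums to the vertex $n$ using $d_{ij}=d_{n(j+n-i)}$, and the same vanishing of the odd trigonometric sums to produce $P_l,Q_l,S_l$ — this is exactly what the paper does in Lemmas \ref{lem-l=1}, \ref{lem-1-n} and \ref{lem-1-n/2}. The one genuine organizational difference is how block-diagonality is obtained: the paper never needs a separate invariance argument, because it directly verifies the eigenvector-type relation $\mathcal{M}^{-1}D^2U(a)A(l)=A(l)M_l$ for an explicit $4\times4$ (or $2\times2$) matrix $M_l$, after which orthonormality $A^T\mathcal{M}A=I$ kills all off-diagonal blocks at once; you instead first invoke equivariance of $D^2U(a)$ under the twisted shift $\tau$. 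Be careful there: commuting with $\tau$ only forces invariance of the $\tau$-isotypic components, and because the conjugate-linear part of $D^2U(a)$ sends complex mode $m$ to mode $2-m$, the translation modes $(e_1,\mathbb{J}_ne_1)$ (mode $0$) and $(v(1),\mathbb{J}_nv(1))$ (mode $2$) sit in the \emph{same} isotypic component, so symmetry alone does not separate them; you must additionally use $D^2U(a)e_1=D^2U(a)\mathbb{J}_ne_1=0$ from (\ref{eig1})--(\ref{eig2}) together with the symmetry of $D^2U(a)$ to conclude that $\mathrm{span}(v(1),\mathbb{J}_nv(1))$ is invariant. Since you already cite those identities for the $O_2$ block, this closes the argument, but as written the sentence ``since $D^2U(a)$ commutes with $\tau$ and is symmetric, it leaves each of these subspaces invariant'' overstates what equivariance gives you. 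The remaining details (the role of $w(\frac n2)=0$ and $S_{n/2}=0$ for even $n$, and the value of $\lambda$) match the paper.
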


Now, let $D^2U(a)$ denoted by the following form,
\bea
D^2U(a)=\left( \begin{array}{cccc} U_{11} & U_{12} & ... & U_{1n} \\
 U_{21} & U_{22} & ... & U_{2n}\\
 ... & ... & ... & ...\\
 U_{n1} & U_{n2} & ... & U_{nn}\end{array}\right)_{2n\times2n},\nonumber
\eea
Then $U_{ij}=U_{ji},U_{jj}=-\sum_{i\neq j}U_{ij}$ and for $1\leq i\neq j\leq N$, we have $U_{ij}=\frac{m_{i}m_{j}}{d_{ij}^3}(I_{2}-3x_{ij}x_{ij}^T)$,
where $x_{ij}=\frac{x_{j}-x_{i}}{d_{ij}}.$ Recall that $R(\theta)=\left(
                                                                    \begin{array}{cc}
                                                                      \cos\theta & -\sin\theta \\
                                                                      \sin\theta & \cos\theta \\
                                                                    \end{array}
                                                                  \right)
$ and we introduce a new matrix
$
\hat{R}(\theta)=\left(
\begin{array}{cc}
\cos\theta & \sin\theta \\
\sin\theta & -\cos\theta \\
\end{array}
\right),
$
then direct computation shows that
\bea
U_{ij}
&=&\frac{1}{d_{ij}^3}\left(-\frac{1}{2}I_{2}+\frac{3}{2}R(\theta_{j-i})\hat{R}(2\theta_{i})\right),i\neq j,\label{3.29}\\
U_{jj}&=&-\sum_{i\neq j}U_{ij}, \label{3.30}
\eea
To obtain Proposition \ref{lem-redu}, we need the following lemmas.
\begin{lem}\label{lem-l=1}
For $l=1$, we have
\bea
A(1)^{-1}\frac{1}{\lambda}\mathcal{M}^{-1}D^2U(a)A(1)=\frac{1}{\lambda}\left(
       \begin{array}{cc}
         z & 0 \\
         0 & z \\
       \end{array}
     \right).\label{l=1}\nonumber
\eea
where
$
z=\sum_{j=1}^{n-1}\frac{1}{2d^{3}_{nj}}(1-\cos2\theta_{j}).
$
\end{lem}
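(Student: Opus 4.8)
The plan is to compute the $2\times 2$ block $A(1)^{-1}\frac{1}{\lambda}\mathcal{M}^{-1}D^2U(a)A(1)$ directly, exploiting the cyclic symmetry of the regular $n$-gon. Recall that $\mathcal{M}=I_{2n}$, that $A(1)=\frac{1}{\sqrt n}(v(1),\mathbb{J}_n v(1))$ with $v(1)=(\cos 2\theta_1,\sin 2\theta_1,\dots,\cos 2\theta_n,\sin 2\theta_n)^T$, and that $v(1)^T v(1)=n$, so that $A(1)^T A(1)=I_2$; hence $A(1)^{-1}=A(1)^T$ as a map onto the two-dimensional column span. Thus it suffices to evaluate the four inner products $v(1)^T D^2U(a)\,v(1)$, $v(1)^T D^2U(a)\,\mathbb{J}_n v(1)$, $(\mathbb{J}_n v(1))^T D^2U(a)\,v(1)$, and $(\mathbb{J}_n v(1))^T D^2U(a)\,\mathbb{J}_n v(1)$, and then divide by $n\lambda$.

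First I would use the block structure $D^2U(a)=(U_{ij})$ together with formulas (\ref{3.29})–(\ref{3.30}): $U_{ij}=\frac{1}{d_{ij}^3}(-\frac12 I_2+\frac32 R(\theta_{j-i})\hat R(2\theta_i))$ for $i\neq j$ and $U_{jj}=-\sum_{i\neq j}U_{ij}$. Writing $v(1)=(y_1,\dots,y_n)^T$ with $y_k=(\cos 2\theta_k,\sin 2\theta_k)^T=\hat R(2\theta_k)e$ for a suitable fixed unit vector (e.g. $e=(1,0)^T$), the quadratic form becomes
\[
v(1)^T D^2U(a)\,v(1)=\sum_{i\neq j} y_i^T U_{ij}\, y_j - \sum_j y_j^T\Big(\sum_{i\neq j}U_{ij}\Big)y_j
=\sum_{i\neq j}(y_i-y_j)^T U_{ij}\,y_j,
\]
after the standard symmetrization using $U_{ij}=U_{ji}$. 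By the rotational symmetry of the $n$-gon (translation by $2\pi/n$ in the index is an isometry), each sum over $i\neq j$ reduces to $n$ times the sum over a single index, say fixing $j=n$ and letting $i$ run over $1,\dots,n-1$; here one uses $\theta_n=2\pi$, $d_{nj}=d_{n,n-j}$ appropriately, and the explicit trigonometric identities $R(\theta)\hat R(\phi)=\hat R(\phi-\theta)$-type relations to collapse $y_i^T R(\theta_{j-i})\hat R(2\theta_i)y_j$ into a function of $\theta_{j-i}$ alone. Matching against the claimed value $z=\sum_{j=1}^{n-1}\frac{1}{2d_{nj}^3}(1-\cos 2\theta_j)$ then amounts to verifying that the surviving cross term is exactly $\frac32$ times $\sum (1-\cos 2\theta_j)/(2d_{nj}^3)$ minus the already-accounted $-\frac12$ contributions, i.e.\ a finite trigonometric bookkeeping check. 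The off-diagonal entries $v(1)^T D^2U(a)\,\mathbb{J}_n v(1)$ should vanish: here I would invoke $\mathbb{J}_n A=A\mathbb{J}_n$ (from (\ref{A})), which forces $A(1)^{-1}\mathcal{M}^{-1}D^2U(a)A(1)$ to commute with $J_2$; a $2\times2$ matrix commuting with $J_2$ and symmetric (since $\mathcal{M}^{-1}D^2U(a)$ is symmetric and $A^T\mathcal{M}A=I$) must be a scalar multiple of $I_2$. That symmetry argument in fact gives the whole shape $zI_2/\lambda$ for free, leaving only the scalar $z=\frac{1}{n}v(1)^T D^2U(a)v(1)$ to be identified.

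The main obstacle I anticipate is the trigonometric collapse: carefully tracking the product $R(\theta_{j-i})\hat R(2\theta_i)$ acting on $y_j=\hat R(2\theta_j)e$ and showing the dependence on $i$ drops out (so that each term depends only on the difference $j-i$), and then recognizing the resulting finite sum as precisely $\sum_{j=1}^{n-1}\frac{1}{2d_{nj}^3}(1-\cos 2\theta_j)$. This is purely computational but requires care with the definitions of $R$, $\hat R$, and the angle conventions $\theta_k=2\pi k/n$; the commutation relation $\mathbb{J}_nA=A\mathbb{J}_n$ is what makes the rest routine, since it reduces a $2\times2$ matrix identity to a single scalar identity.
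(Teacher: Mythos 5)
Your overall plan (compute the block directly from the formulas for $U_{ij}$, symmetrize, and use the cyclic symmetry $d_{ij}=d_{n(j+n-i)}$ to collapse to a single sum over $j=1,\dots,n-1$) is essentially the paper's computation, which works with $\sum_j U_{ij}R(2\theta_j)$ rather than with quadratic forms. However, there is a genuine gap in the step you use to get the shape $zI_2$ ``for free.'' You claim that $\mathbb{J}_nA=A\mathbb{J}_n$ forces $A(1)^{-1}\mathcal{M}^{-1}D^2U(a)A(1)$ to commute with $J_2$. It does not: writing out both sides, $A(1)^TD^2U(a)A(1)J_2-J_2A(1)^TD^2U(a)A(1)=A(1)^T\big(D^2U(a)\mathbb{J}_n-\mathbb{J}_nD^2U(a)\big)A(1)$, so what you actually need is that $D^2U(a)$ commutes with $\mathbb{J}_n$ (at least after compression by $A(1)$), and that is a nontrivial fact about the $n$-gon, not a consequence of the structure of $A$. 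A decisive counterexample inside this very paper: the block $A(0)=\frac{1}{\sqrt{\mathcal{I}(a)}}(a,\mathbb{J}_na)$ satisfies exactly the same relations $\mathbb{J}_nA(0)=A(0)J_2$ and $A(0)^T\mathcal{M}A(0)=I_2$, yet $\frac{1}{\lambda}A(0)^TD^2U(a)A(0)=\mathcal{U}_0=\mathrm{diag}(2,-1)$, which is symmetric but does not commute with $J_2$; the same is true of the blocks $\mathcal{U}_l$ for $l\geq 2$. So your symmetry argument would prove a false statement in general, and the vanishing of the off-diagonal entries for $l=1$ must be computed. That computation is exactly where the paper spends its effort: after the cyclic re-indexing one is left with $\sum_{j=1}^{n-1}\frac{1}{2d_{nj}^3}\big(I_2-R(2\theta_j)\big)R(2\theta_i)$, and the off-diagonal part dies only because of the identities $\sum_{j\neq i}\frac{\sin\theta_{i-j}}{d_{ij}^3}=0$ and $\sum_{j=1}^{n-1}\frac{\sin 2\theta_j}{2d_{nj}^3}=0$, which hold by pairing $j\leftrightarrow n-j$. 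Your diagonal bookkeeping sketch is fine, but without these cancellations the lemma is not proved.

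A secondary, smaller point: computing only the compression $A(1)^TD^2U(a)A(1)$ establishes the $2\times 2$ block in the lemma, but the paper's proof actually establishes the stronger eigenvector statement $\mathcal{M}^{-1}D^2U(a)A(1)=A(1)\,zI_2$, i.e.\ invariance of the span of $A(1)$. That invariance is what Proposition \ref{lem-redu} needs to conclude that $A^{-1}\frac{1}{\lambda}\mathcal{M}^{-1}D^2U(a)A$ is block diagonal; the compression alone does not rule out coupling between the $A(1)$ block and the other $A(l)$ blocks. If you carry out your computation at the level of $D^2U(a)A(1)$ (as the paper does) rather than of inner products, you get both facts at once.
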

\begin{proof}
For $l = 1$, we have
$$
\frac{1}{\lambda} \mathcal{M}^{-1} D^{2} U (a) A (1) =\frac{1}{\sqrt{n} \lambda}\left (\begin{array}{c}
\sum_{j=1}^{n} U_{1 j} R\left (2 \theta_{j}\right) \\
\vdots \\
\sum_{j=1}^{n} U_{n j} R\left (2 \theta_{j}\right)
\end{array}\right).
$$
from (\ref{3.29}), (\ref{3.30}), direct computation shows that,
\bea
\begin{aligned}
        \sum_{j=1}^{n}U_{ij}R (2\theta_j)
        &=\sum_{j=1, j\ne i}^{n}\frac{1}{d_{ij}^3} \left(-\frac{1}{2}I_2+\frac{3}{2}R (\theta_{j-i}) \hat{R} (2\theta_i) \right) \left(R (2\theta_j) -R (2\theta_i) \right) \\
        &=\sum_{j=1, j\ne i}^{n}\frac{1}{2d_{ij}^3} \left(R (2\theta_i) -R (2\theta_j) \right) +\frac{3}{2}\sum_{j=1, j\ne i}^{n}\frac{1}{d_{ij}^3} \left(\hat{R} (\theta_{i-j}) -\hat{R} (\theta_{j-i}) \right) \\
        &=\begin{aligned}[t]
            \sum_{j=1, j\ne i}^{n}\frac{1}{2d_{ij}^3} \left(I_2-R (2 \theta_{j-i} ) \right) R (2\theta_i)
            +\frac{3}{2}\sum_{j=1, j\ne i}^{n}\frac{1}{d_{ij}^3}\begin{pmatrix}
  0& 2\sin  \theta_{i-j} \\
  2\sin  \theta_{i-j} &0
\end{pmatrix},
        \end{aligned}
\end{aligned}\nonumber
\eea
the second equality from the facts
$$
R (\theta_{j-i}) \hat{R} (2\theta_i)R (2\theta_j)=\hat{R} (\theta_{i-j}),\ \ R (\theta_{j-i}) \hat{R} (2\theta_i)R (2\theta_i)=\hat{R} (\theta_{j-i}).
$$
Moreover, one can check that $ \sum_{j=1, j\ne i}^{n}\frac{1}{d_{ij}^3}\sin \theta_{i-j}=0$, hence
\bea
\sum_{j=1}^{n}U_{ij}R (2\theta_j)= \sum_{j=1, j\ne i}^{n}\frac{1}{2d_{ij}^3} \left(I_2-R (2 \theta_{j-i} ) \right) R (2\theta_i).\nonumber
\eea
Since $d_{ij}=2|\sin\frac{\theta_{i}-\theta_{j}}{2}|=d_{n(j+n-i)}$, we further obtain
\bea
 \begin{aligned}
        \sum_{j=1}^{n}U_{ij}R (2\theta_j)&=\sum_{j=1, j\ne i}^{n}\frac{1}{2d_{n (j+n-i) }^3} \left(I_2-R (2 \theta_{j-i} ) \right) R (2\theta_i)
        =\sum_{\tilde{j}=1+n-i, \tilde{j}\ne n}^{n+n-i}\frac{1}{2d_{n\tilde{j}}^3} \left(I_2-R (2 \theta_{\tilde{j}} ) \right) R (2\theta_i) \\
        &=\sum_{\tilde{j}=1+n-i}^{n-1}\frac{1}{2d_{n\tilde{j}}^3} \left(I_2-R (2 \theta_{\tilde{j}} ) \right) R (2\theta_i)
            +\sum_{\tilde{j}=n+1}^{n+n-i}\frac{1}{2d_{n\tilde{j}}^3} \left(I_2-R (2 \theta_{\tilde{j}} ) \right) R (2\theta_i)\\
            &=\sum_{\tilde{j}=1+n-i}^{n-1}\frac{1}{2d_{n\tilde{j}}^3} \left(I_2-R (2 \theta_{\tilde{j}} ) \right) R (2\theta_i)
            +\sum_{\hat{j}=1}^{n-i}\frac{1}{2d_{n (\hat{j}+n) }^3} \left(I_2-R (2 \theta_{ (\hat{j}+n) } ) \right) R (2\theta_i)\\
            &=\sum_{\tilde{j}=1+n-i}^{n-1}\frac{1}{2d_{n\tilde{j}}^3} \left(I_2-R (2 \theta_{\tilde{j}} ) \right) R (2\theta_i)
            +\sum_{\hat{j}=1}^{n-i}\frac{1}{2d_{n \hat{j} }^3} \left(I_2-R (2 \theta_{ \hat{j} } ) \right) R (2\theta_i)\\
            &=\sum_{j=1}^{n-1}\frac{1}{2d_{nj}^3}\begin{pmatrix}
            1-\cos2\theta_j&\sin2\theta_j\\
            -\sin2\theta_j&1-\cos2\theta_j
             \end{pmatrix}R (2\theta_i).
    \end{aligned}\label{simcal}
\eea
Since $\sum_{j=1}^{n-1}\frac{1}{2d_{nj}^3}\sin2\theta_j=0$, we further obtain
$$
\sum_{j=1}^{n}U_{ij}R (2\theta_j)
        =\sum_{j=1}^{n-1}\frac{1}{2d_{nj}^3}\begin{pmatrix}
            1-\cos2\theta_j&0\\
            0&1-\cos2\theta_j
        \end{pmatrix}R (2\theta_i)
        =\begin{pmatrix}
            z&0\\0&z
        \end{pmatrix}R (2\theta_i),
$$
where
$
z=\sum_{j=1}^{n-1}\frac{1}{2d^{3}_{nj}}(1-\cos2\theta_{j}).
$
Therefore, we have
$$
\frac{1}{\lambda} \mathcal{M}^{-1} D^{2} U (a) A (1) =\frac{1}{\sqrt{n} \lambda}\left (\begin{array}{c}
\sum_{j=1}^{n} U_{1 j} R\left (2 \theta_{j}\right) \\
\vdots \\
\sum_{j=1}^{n} U_{n j} R\left (2 \theta_{j}\right)
\end{array}\right) =\frac{A (1) }{\lambda}\left (\begin{array}{cc}
z & 0 \\
0 & z
\end{array}\right),
$$
this implies
$$
A(1)^{-1}\frac{1}{\lambda}\mathcal{M}^{-1}D^2U(a)A(1)=\frac{1}{\lambda}\left(
       \begin{array}{cc}
         z & 0 \\
         0 & z \\
       \end{array}
     \right).\label{l=1}\nonumber
$$
\end{proof}
\begin{lem}\label{lem-1-n}
For $2\leq l\leq[\frac{n-1}{2}]$, we have
$$
A(l)^{-1}\frac{1}{\lambda}\mathcal{M}^{-1}D^2U(a)A(l)=
\frac{1}{\lambda}\left(\begin{array}{cccc}a_{l} & 0 & 0 & S_{l}\\
0 & b_{l} & -S_{l} & 0\\ 0 & -S_{l} & a_{l} & 0\\
S_{l} & 0 & 0 & b_{l}\end{array}\right)\label{l=2}
$$
where
\bea
a_{l}=P_{l}-3Q_{l},\ \ b_{l}=P_{l}+3Q_{l},\nonumber
\eea
\bea
P_{l}=\sum_{j=1}^{n-1}\frac{1-\cos \theta_{jl}\cos \theta_{j}}{2d_{nj}^3},\ \
S_{l}=\sum_{j=1}^{n-1}\frac{\sin \theta_{jl}\sin \theta_{j}}{2d_{nj}^3},\ \
Q_{l}=\sum_{j=1}^{n-1}\frac{\cos \theta_{j}-\cos \theta_{jl}}{2d_{nj}^3}.\label{3.5}\nonumber
\eea
\end{lem}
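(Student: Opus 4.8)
The plan is to prove the equivalent identity $\tfrac1\lambda\mathcal{M}^{-1}D^2U(a)A(l)=A(l)\,\mathcal{U}_l$ (recall here $\mathcal{M}=I_{2n}$), i.e. that the column span of $A(l)$ is invariant under $\tfrac1\lambda D^2U(a)$ and that the matrix of the restriction, in the ordered basis given by the four columns of $A(l)$, is the claimed $\mathcal{U}_l$. This follows the template of the proof of Lemma \ref{lem-l=1}. The first step is to record the columns of $A(l)=\sqrt{2/n}\,(v(l),\mathbb{J}_n v(l),w(l),\mathbb{J}_n w(l))$ uniformly: since $x_k=(\cos\theta_k,\sin\theta_k)^T=R(\theta_k)(1,0)^T$ and $J_2x_k=R(\theta_k)(0,1)^T$, the $k$-th $2$-block of a general column of $A(l)$ has the form $\sqrt{2/n}\,g(\theta_{kl})R(\theta_k)w$, where $\theta_{kl}=l\theta_k$, $g\in\{\cos,\sin\}$, and $w\in\mathbb{R}^2$ is $(1,0)^T$ or $(0,1)^T$ according to whether the column comes from $v(l),w(l)$ or from $\mathbb{J}_n v(l),\mathbb{J}_n w(l)$; the four pairs $(g,w)$ correspond to columns $1,2,3,4$ respectively.

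Next I would apply $\tfrac1\lambda D^2U(a)$. Using $D^2U(a)_{jj}=-\sum_{i\ne j}D^2U(a)_{ij}$ (i.e. (\ref{3.30})), the $i$-th block of $\tfrac1\lambda D^2U(a)(\text{column})$ is $\tfrac{\sqrt{2/n}}{\lambda}\sum_{j\ne i}U_{ij}\big(g(\theta_{jl})R(\theta_j)-g(\theta_{il})R(\theta_i)\big)w$. The key algebraic relations are $R(\theta_{j-i})\hat R(2\theta_i)R(\theta_j)=\hat R(\theta_i)$ and $R(\theta_{j-i})\hat R(2\theta_i)R(\theta_i)=\hat R(\theta_j)$, which come from $\hat R(\alpha)R(\beta)=\hat R(\alpha-\beta)$ and $R(\alpha)\hat R(\beta)=\hat R(\alpha+\beta)$ — the same mechanism used in Lemma \ref{lem-l=1}. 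Together with (\ref{3.29}) these give $U_{ij}R(\theta_j)=\tfrac1{d_{ij}^3}\big(-\tfrac12 R(\theta_j)+\tfrac32\hat R(\theta_i)\big)$ and $U_{ij}R(\theta_i)=\tfrac1{d_{ij}^3}\big(-\tfrac12 R(\theta_i)+\tfrac32\hat R(\theta_j)\big)$, so that the $i$-th block becomes $\tfrac{\sqrt{2/n}}{\lambda}\sum_{j\ne i}\tfrac1{d_{ij}^3}\big[\tfrac12 g(\theta_{il})R(\theta_i)-\tfrac12 g(\theta_{jl})R(\theta_j)+\tfrac32 g(\theta_{jl})\hat R(\theta_i)-\tfrac32 g(\theta_{il})\hat R(\theta_j)\big]w$.

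Then, exactly as in Lemma \ref{lem-l=1}, I would use $d_{ij}=d_{n(j+n-i)}$ to reindex the sum by $\tilde{j}\equiv j-i\pmod n$ running over $1,\dots,n-1$ (splitting off the wraparound part $\tilde{j}>n$), so that $d_{ij}\to d_{n\tilde{j}}$, $\theta_{j-i}\to\theta_{\tilde{j}}$, $R(\theta_j)\to R(\theta_i)R(\theta_{\tilde{j}})$, $\hat R(\theta_j)\to R(\theta_i)\hat R(\theta_{\tilde{j}})$ and $\hat R(\theta_i)=R(\theta_i)\hat R(0)$; pulling the constant factor $R(\theta_i)$ out front and expanding $g(\theta_{jl})=g(l\theta_i+l\theta_{\tilde{j}})$ by the angle-addition formula, I collect the coefficients of $\cos(l\theta_i)$ and $\sin(l\theta_i)$. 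After evaluating on $w$ and using the odd-symmetry vanishing sums $\sum_{\tilde{j}}\tfrac{\sin\theta_{\tilde{j}l}}{d_{n\tilde{j}}^3}=\sum_{\tilde{j}}\tfrac{\sin\theta_{\tilde{j}l}\cos\theta_{\tilde{j}}}{d_{n\tilde{j}}^3}=\sum_{\tilde{j}}\tfrac{\sin\theta_{\tilde{j}}}{d_{n\tilde{j}}^3}=\sum_{\tilde{j}}\tfrac{\cos\theta_{\tilde{j}l}\sin\theta_{\tilde{j}}}{d_{n\tilde{j}}^3}=0$ (each holds because $d_{n\tilde{j}}$ and $\cos\theta_{\tilde{j}l},\cos\theta_{\tilde{j}}$ are invariant while the sines are odd under $\tilde{j}\mapsto n-\tilde{j}$), the surviving scalar sums are precisely $P_l$, $S_l$, $Q_l$ as defined in (\ref{3.5}). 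Matching the two components column by column then yields $\tfrac1\lambda D^2U(a)(\mathrm{col}\,1)=\tfrac1\lambda\big(a_l\,\mathrm{col}\,1+S_l\,\mathrm{col}\,4\big)$, $\tfrac1\lambda D^2U(a)(\mathrm{col}\,2)=\tfrac1\lambda\big(b_l\,\mathrm{col}\,2-S_l\,\mathrm{col}\,3\big)$, $\tfrac1\lambda D^2U(a)(\mathrm{col}\,3)=\tfrac1\lambda\big(a_l\,\mathrm{col}\,3-S_l\,\mathrm{col}\,2\big)$ and $\tfrac1\lambda D^2U(a)(\mathrm{col}\,4)=\tfrac1\lambda\big(b_l\,\mathrm{col}\,4+S_l\,\mathrm{col}\,1\big)$ with $a_l=P_l-3Q_l$, $b_l=P_l+3Q_l$, which is exactly the matrix $\mathcal{U}_l$ in the statement.

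The main obstacle is organizational bookkeeping rather than any new idea: there are four columns with two scalar components each, hence eight scalar identities to verify, and one must track the reflection-matrix products $\hat R(\cdot)$ together with the index shift (in particular the wraparound contribution with $\tilde{j}>n$, handled exactly as in Lemma \ref{lem-l=1}) carefully enough to avoid sign errors. All of the analytic content — the symmetry of $d_{ij}$, the reflection–rotation identities, and the vanishing of the relevant odd trigonometric sums — is already present in the proof of Lemma \ref{lem-l=1}; only the extra factors $\cos\theta_{jl},\sin\theta_{jl}$ and the $4$-dimensional block make this computation longer.
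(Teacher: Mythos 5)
Your proposal is correct and follows essentially the same route as the paper's proof: both apply $D^2U(a)$ directly to the columns of $A(l)$, use $U_{jj}=-\sum_{i\neq j}U_{ij}$ together with the reflection form (\ref{3.29}) of $U_{ij}$ and the rotation--reflection identities, reindex the sum via $d_{ij}=d_{n(j+n-i)}$, and finish with the vanishing of the odd trigonometric sums, which leaves exactly $P_l$, $S_l$, $Q_l$. The only difference is organizational: the paper packages the four real columns into a single complex $2\times2$ object $\omega^{jl}\hat R(\theta_j)$ with $\omega=e^{2\pi\sqrt{-1}/n}$ and separates real and imaginary parts at the end, whereas you expand $g(\theta_{il}+\theta_{\tilde j l})$ by real angle-addition formulas and verify the eight scalar identities column by column.
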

\begin{proof}
For $2 \leq l \leq\left[\frac{n - 1}{2}\right]$, consider
$$\mathcal{M}^{-1}D^2U (a) \begin{pmatrix}
    \omega^l\hat{R} (\theta_1) \\ \vdots \\\omega^{nl}\hat{R} (\theta_n)
\end{pmatrix}=\begin{pmatrix}
    \sum_{j=1}^{n}U_{1j}\omega^{jl}\hat{R} (\theta_j) \\ \vdots \\ \sum_{j=1}^{n}U_{nj}\omega^{jl}\hat{R} (\theta_j)
\end{pmatrix}, $$
where $\omega=e^{\frac{2\pi}{n}\sqrt{-1}}$, $\sqrt{-1}$ represents the imaginary unit. From (\ref{3.29}), (\ref{3.30}), direct computation shows that,
\bea
\begin{aligned}
    \sum_{j=1}^{n}U_{ij}\omega^{jl}\hat{R} (\theta_j)
    &=\sum_{j=1, j\ne i}^{n} \left(U_{ij}\omega^{jl}\hat{R} (\theta_j) -U_{ij}\omega^{il}\hat{R} (\theta_i) \right)
    =\sum_{j=1, j\ne i}^{n}U_{ij} \left(\omega^{ (j-i) l}R (\theta_{j-i}) -I\right) \omega^{il}\hat{R} (\theta_i) \\
    &=\sum_{j=1, j\ne i}^{n}\frac{1}{d_{ij}^3} \left(-\frac{1}{2}I+\frac{3}{2}R (\theta_{j-i}) \hat{R} (2\theta_i) \right) \cdot  \left(\omega^{ (j-i) l}R (\theta_{j-i}) -I\right) \omega^{il}\hat{R} (\theta_i) \\
    &=\begin{aligned}[t]
        &-\sum_{j=1, j\ne i}^{n}\frac{1}{2d_{ij}^3}  \left(\omega^{ (j-i) l}R (\theta_{j-i}) -I\right) \omega^{il}\hat{R} (\theta_i) \\
        & +\frac{3}{2}\sum_{j=1, j\ne i}^{n}\frac{R (\theta_{j-i}) \hat{R} (2\theta_i) }{d_{ij}^3}  \left(\omega^{ (j-i) l}R (\theta_{j-i}) -I\right) \omega^{il}\hat{R} (\theta_i)
    \end{aligned}\\
    &=\begin{aligned}[t]
        &-\sum_{j=1, j\ne i}^{n}\frac{1}{2d_{ij}^3}  \left(\omega^{ (j-i) l}R (\theta_{j-i}) -I\right) \omega^{il}\hat{R} (\theta_i) \\
        & +\frac{3}{2}\sum_{j=1, j\ne i}^{n}\frac{1}{d_{ij}^3}  \left(\omega^{ (j-i) l}I-R (\theta_{j-i}) \right) \omega^{il}\hat{R} (2\theta_i) \hat{R} (\theta_i),
    \end{aligned}\label{simcal2}
\end{aligned}
\eea
the last equation is based on the fact
$$
R (\theta_{j-i}) \hat{R} (2\theta_i) R (\theta_{j-i})=\hat{R} (2\theta_i).
$$
Similar the calculations of (\ref{simcal}) in the proof of Lemma \ref{lem-l=1}, we have
\bea
\begin{aligned}
    &\ \ \ -\sum_{j=1, j\ne i}^{n}\frac{1}{2d_{ij}^3}  \left(\omega^{ (j-i) l}R (\theta_{j-i}) -I\right) \omega^{il}\hat{R} (\theta_i)\\
    &=-\sum_{j=1}^{n-1}\frac{1}{2d_{nj}^3}  (\omega^{ (j-n) l}R (\theta_{j-n}) -I) \omega^{il}\hat{R} (\theta_i)
    =-\sum_{j=1}^{n-1}\frac{1}{2d_{nj}^3}  (\omega^{jl}R (\theta_j) -I) \omega^{il}\hat{R} (\theta_i) \\
    &=
    \begin{aligned}[t]
        &-\sum_{j=1}^{n-1}\frac{1}{2d_{nj}^3}\Bigg(\begin{pmatrix}
        \cos\theta_{jl}\cos\theta_j&-\cos\theta_{jl}\sin\theta_j\\
        \cos\theta_{jl}\sin\theta_{j}&\cos\theta_{jl}\cos\theta_j
    \end{pmatrix}
    +\sqrt{-1}\begin{pmatrix}
        \sin\theta_{jl}\cos\theta_j&-\sin\theta_{jl}\sin\theta_j\\
        \sin\theta_{jl}\sin\theta_{j}&\sin\theta_{jl}\cos\theta_j
    \end{pmatrix}-I \Bigg)\omega^{il}\hat{R} (\theta_i).
    \end{aligned}\\
&=
    -\sum_{j=1}^{n-1}\frac{1}{2d_{nj}^{3}}\begin{pmatrix}
        \cos\theta_{jl}\cos\theta_j-1 & -\sqrt{-1}\sin\theta_{jl}\sin\theta_j\\
        \sqrt{-1}\sin\theta_{jl}\sin\theta_j & \cos\theta_{jl}\cos\theta_j-1
    \end{pmatrix}\omega^{il}\hat{R} (\theta_i)
    =\begin{pmatrix}
        P_l&\sqrt{-1}S_l\\-\sqrt{-1}S_l&P_l
    \end{pmatrix}\omega_{il}\hat{R} (\theta_i), \label{simcal3}
\end{aligned}
\eea
where
$$P_l=\sum_{j=1}^{n-1}\frac{1-\cos\theta_{jl}\cos\theta_j}{2d_{nj}^3}, \quad S_l=\sum_{j=1}^{n-1}\frac{\sin\theta_{jl}\sin\theta_j}{2d_{nj}^3}.$$
The third equation is based on the fact
$$\sum_{j=1}^{n-1}\frac{1}{2d_{nj}^3}\cos\theta_{jl}\sin\theta_j=0,\ \ \sum_{j=1}^{n-1}\frac{1}{2d_{nj}^3}\sin\theta_{jl}\cos\theta_j=0.$$
Similar, we have
\bea
\begin{aligned}
    &\ \ \ \frac{3}{2}\sum_{j=1, j\ne i}^{n}\frac{1}{d_{ij}^3}\left (\omega^{ (j-i) l}I-R (\theta_{j-i}) \right) \omega^{il}\hat{R} (2\theta_i) \hat{R} (\theta_i)\\
    &=\frac{3}{2}\sum_{j=1}^{n-1}\frac{1}{d_{nj}^3}\left (\omega^{ (j-n) l}I-R (\theta_{j-n}) \right) \omega^{il}\hat{R} (2\theta_i) \hat{R} (\theta_i)
    =\frac{3}{2}\sum_{j=1}^{n-1}\frac{1}{d_{nj}^3}\left (\omega^{jl}I-R (\theta_{j}) \right) \omega^{il}\hat{R} (2\theta_i) \hat{R} (\theta_i)\\
    &=\frac{3}{2}\sum_{j=1}^{n-1}\frac{1}{d_{nj}^3}\begin{pmatrix}
        \cos\theta_{jl}-\cos\theta_j & 0\\ 0 & \cos\theta_{jl}-\cos\theta_j
    \end{pmatrix}\omega^{il}R (\theta_i)
    =\begin{pmatrix}
        -3Q_l&0\\0&-3Q_l
    \end{pmatrix}\omega^{il}R (\theta_i),\label{simcal4}
\end{aligned}
\eea
where
$$Q_l=\sum_{j=1}^{n-1}\frac{\cos\theta_j-\cos\theta_{jl}}{2d_{nj}^3}.$$
The third equation is based on the fact
$$\sum_{j=1}^{n-1}\frac{\sin\theta_{jl}}{d_{nj}^3}=0,\ \ \sum_{j=1}^{n-1}\frac{\sin\theta_{j}}{d_{nj}^3}=0.$$
Substituting equations (\ref{simcal3}) and (\ref{simcal4}) into equation (\ref{simcal2}) yields
\bea
\begin{aligned}
\sum_{j=1}^{n}U_{ij}\omega^{jl}\hat{R} (\theta_j)&=\begin{pmatrix}
    P_l&\sqrt{-1}S_l\\-\sqrt{-1}S_l&P_l
\end{pmatrix}\omega^{il}\hat{R} (\theta_i) +\begin{pmatrix}
    -3Q_l&0\\0&-3Q_l
\end{pmatrix}\omega^{il}R (\theta_i)\\
&=\left (P_lI_2-\sqrt{-1}J_2S_l\right) \omega^{il}\hat{R} (\theta_i)
-3Q_l \omega^{il}R (\theta_i).
\end{aligned}
\label{simcal5}
\eea
One can check that
$$
\begin{aligned}
    \omega^{il}\hat{R} (\theta_i)
    &=\cos\theta_{il}\begin{pmatrix}
        \cos\theta_i&\sin\theta_i\\
        \sin\theta_i&-\cos\theta_i
    \end{pmatrix}+\sqrt{-1}\sin\theta_{il}\begin{pmatrix}
        \cos\theta_i&\sin\theta_i\\
        \sin\theta_i&-\cos\theta_i
    \end{pmatrix}\\
    &=\left(v_{il}+\sqrt{-1}w_{il}, -J_2 (v_{il}+\sqrt{-1}w_{il}) \right),
\end{aligned}
$$
$$
\begin{aligned}
    \omega^{il}R (\theta_{i})
    &=\cos\theta_{il}\begin{pmatrix}
        \cos\theta_i&-\sin\theta_i\\
        \sin\theta_i&\cos\theta_i
    \end{pmatrix}+\sqrt{-1}\sin\theta_{il}\begin{pmatrix}
        \cos\theta_i&-\sin\theta_i\\
        \sin\theta_i&\cos\theta_i
    \end{pmatrix}\\
    &=\left(v_{il}+\sqrt{-1}w_{il}, J_2 (v_{il}+\sqrt{-1}w_{il}) \right),
\end{aligned}
$$
hence (\ref{simcal5}) is equivalent to
$$
\begin{aligned}
&\ \ \ \sum_{j=1}^{n}U_{ij}\left(v_{il}+\sqrt{-1}w_{il}, -J_2 (v_{il}+\sqrt{-1}w_{il})\right)\\
&=\left (P_lI_2-\sqrt{-1}J_2S_l\right) \cdot\left (v_{il}+\sqrt{-1}w_{il}, -J_2 (v_{il}+\sqrt{-1}w_{il}) \right)
-3Q_l\cdot\left (v_{il}+\sqrt{-1}w_{il}, J_2 (v_{il}+\sqrt{-1}w_{il}) \right)\\
&=\left (v_{il}+\sqrt{-1}w_{il}, J_2 (v_{il}+\sqrt{-1}w_{il}) \right)
    \cdot\begin{pmatrix}
        P_l-3Q_l&-\sqrt{-1}S_l\\-\sqrt{-1}S_l&-P_l-3Q_l
    \end{pmatrix}.
\end{aligned}
$$
This implies
$$
\begin{aligned}
    &\sum_{j=1}^{n} U_{i j}\left (v_{j l},  J_{2} v_{j l},  w_{j l},  J_{2} w_{j l}\right)
    =\left (v_{i l},  J_{2} v_{i l},  w_{i l},  J_{2} w_{i l}\right)\begin{pmatrix}
        P_l-3Q_l&0&0&S_l\\
        0&P_l+3Q_l&-S_l&0\\
        0&-S_l&P_l-3Q_l&0\\
        S_l&0&0&P_l+3Q_l
    \end{pmatrix},
\end{aligned}
$$
Therefore we have
\bea
\begin{aligned}
    &\mathcal{M}^{-1}D^2U (a) \left (v (l),  \mathbb{J}_nv (l),  w (l),  \mathbb{J}_nw (l) \right) \\
    =&\left (v (l),  \mathbb{J}_nv (l),  w (l),  \mathbb{J}_nw (l) \right) \begin{pmatrix}
        P_l-3Q_l&0&0&S_l\\
        0&P_l+3Q_l&-S_l&0\\
        0&-S_l&P_l-3Q_l&0\\
        S_l&0&0&P_l+3Q_l
    \end{pmatrix}.
\end{aligned}\label{for-n}
\eea
and thus we have
$$
A(l)^{-1}\frac{1}{\lambda}\mathcal{M}^{-1}D^2U(a)A(l)=
\frac{1}{\lambda}\left(\begin{array}{cccc}a_{l} & 0 & 0 & S_{l}\\
0 & b_{l} & -S_{l} & 0\\ 0 & -S_{l} & a_{l} & 0\\
S_{l} & 0 & 0 & b_{l}\end{array}\right)
$$
where
\bea
a_{l}=P_{l}-3Q_{l},\ \ b_{l}=P_{l}+3Q_{l},\nonumber
\eea
$$
P_{l}=\sum_{j=1}^{n-1}\frac{1-\cos \theta_{jl}\cos \theta_{j}}{2d_{nj}^3},\ \
S_{l}=\sum_{j=1}^{n-1}\frac{\sin \theta_{jl}\sin \theta_{j}}{2d_{nj}^3},\ \
Q_{l}=\sum_{j=1}^{n-1}\frac{\cos \theta_{j}-\cos \theta_{jl}}{2d_{nj}^3}.
$$
This completes the proof.
\end{proof}
\begin{lem}\label{lem-1-n/2}
For $n \in 2 \mathbb{N}$, we have
\begin{equation*}
A (\frac{n}{2}) ^{-1} \frac{1}{\lambda} \mathcal{M}^{-1} D^{2} U (a) A (\frac{n}{2}) =\frac{1}{\lambda}\left (\begin{array}{cc}
a_{\frac{n}{2}} & 0 \\
0 & b_{\frac{n}{2}}
\end{array}\right),
\end{equation*}
where
\bea
a_{\frac{n}{2}}=P_{\frac{n}{2}}-3Q_{\frac{n}{2}},\ \ b_{\frac{n}{2}}=P_{\frac{n}{2}}+3Q_{\frac{n}{2}},\nonumber
\eea
$$
P_{\frac{n}{2}}=\sum_{j=1}^{n-1}\frac{1-\cos \theta_{j\frac{n}{2}}\cos \theta_{j}}{2d_{nj}^3},\ \
Q_{\frac{n}{2}}=\sum_{j=1}^{n-1}\frac{\cos \theta_{j}-\cos \theta_{j\frac{n}{2}}}{2d_{nj}^3}.
$$
\end{lem}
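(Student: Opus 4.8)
The plan is to treat the statement as the degenerate case $l=\frac{n}{2}$ of Lemma \ref{lem-1-n}, available only when $n$ is even, and to re-run the relevant computation directly, because the four-column identity used there cannot be quoted verbatim here.

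First I would record what is special about this index. Since $\theta_{j,\frac{n}{2}}=\frac{2\pi j}{n}\cdot\frac{n}{2}=\pi j$, one has $\sin\theta_{j,\frac{n}{2}}=0$ and $\cos\theta_{j,\frac{n}{2}}=(-1)^{j}$ for every integer $j$; hence $\omega^{\frac{n}{2}}=e^{\pi\sqrt{-1}}=-1$ is real, the vectors $w(\frac{n}{2})$ and $\mathbb{J}_{n}w(\frac{n}{2})$ vanish identically, the scalar $S_{\frac{n}{2}}=\sum_{j=1}^{n-1}\frac{\sin\theta_{j,\frac{n}{2}}\sin\theta_{j}}{2d_{nj}^{3}}=0$, and $v_{j,\frac{n}{2}}=(-1)^{j}x_{j}$ with $x_{j}=(\cos\theta_{j},\sin\theta_{j})^{T}$. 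In particular the four-column block of Lemma \ref{lem-1-n} collapses onto the two columns forming $A(\frac{n}{2})=\sqrt{\frac{2}{n}}(v(\frac{n}{2}),\mathbb{J}_{n}v(\frac{n}{2}))$ in (\ref{matrix A}).

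Next I would repeat the block computation of $\mathcal{M}^{-1}D^{2}U(a)$ through the manipulations (\ref{simcal2})--(\ref{simcal5}) with $l=\frac{n}{2}$. Inserting $\omega^{jl}=(-1)^{j}$, $S_{\frac{n}{2}}=0$, and the identities $(-1)^{i}\hat{R}(\theta_{i})=(v_{i,\frac{n}{2}},-J_{2}v_{i,\frac{n}{2}})$, $(-1)^{i}R(\theta_{i})=(v_{i,\frac{n}{2}},J_{2}v_{i,\frac{n}{2}})$ into (\ref{simcal5}) turns the complex relation there into the pair of real relations $\sum_{j}U_{ij}v_{j,\frac{n}{2}}=(P_{\frac{n}{2}}-3Q_{\frac{n}{2}})v_{i,\frac{n}{2}}$ and $\sum_{j}U_{ij}J_{2}v_{j,\frac{n}{2}}=(P_{\frac{n}{2}}+3Q_{\frac{n}{2}})J_{2}v_{i,\frac{n}{2}}$ for each $i$. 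Stacking over $i$ yields $\mathcal{M}^{-1}D^{2}U(a)(v(\frac{n}{2}),\mathbb{J}_{n}v(\frac{n}{2}))=(v(\frac{n}{2}),\mathbb{J}_{n}v(\frac{n}{2}))\,\mathrm{diag}(a_{\frac{n}{2}},b_{\frac{n}{2}})$, which is exactly the $S_{\frac{n}{2}}=0$ specialization of (\ref{for-n}) after deleting the vanishing $w$-columns. Dividing by $\lambda$ and using $A(\frac{n}{2})^{T}\mathcal{M}A(\frac{n}{2})=I_{2}$ (so that, since $\mathcal{M}=I_{2n}$, the relevant block of $A^{-1}$ is $A(\frac{n}{2})^{T}$) gives the asserted diagonal form; the displayed formulas for $P_{\frac{n}{2}}$ and $Q_{\frac{n}{2}}$ are those of Lemma \ref{lem-1-n} with $l=\frac{n}{2}$.

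The routine part is the auxiliary trigonometric sums reused from Lemma \ref{lem-1-n}: those containing a factor $\sin\theta_{j,\frac{n}{2}}$ vanish term by term, while identities such as $\sum_{j=1}^{n-1}\frac{\sin\theta_{j}}{d_{nj}^{3}}=0$ follow from the reflection symmetry $j\leftrightarrow n-j$ together with $d_{nj}=d_{n(n-j)}$. The one point requiring genuine care --- and the reason this case is separated from Lemma \ref{lem-1-n} rather than absorbed into it --- is that (\ref{for-n}) cannot be cited directly: for $l=\frac{n}{2}$ the four vectors $v(l),\mathbb{J}_{n}v(l),w(l),\mathbb{J}_{n}w(l)$ are linearly dependent because $w(\frac{n}{2})=0$, so one must re-derive the truncated $2\times2$ relation from (\ref{simcal2})--(\ref{simcal5}), checking along the way that no step used a column that has vanished.
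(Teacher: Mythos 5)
Your proposal is correct and follows essentially the same route as the paper: both rerun the computation of Lemma \ref{lem-1-n} at $l=\frac{n}{2}$ and then use $w(\frac{n}{2})=0$ (equivalently $\sin\theta_{j,\frac{n}{2}}=0$, so also $S_{\frac{n}{2}}=0$) to collapse the four-column identity (\ref{for-n}) onto the two surviving columns $v(\frac{n}{2}),\mathbb{J}_{n}v(\frac{n}{2})$. Your extra care about not quoting (\ref{for-n}) verbatim because of the column degeneracy is a minor sharpening of the paper's ``following the proof of Lemma \ref{lem-1-n}'' step, not a different argument.
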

\begin{proof}
Following the proof of Lemma \ref{lem-1-n}, we obtain an similar formula analogous to (\ref{for-n}) as follows,
$$
\begin{aligned}
    &\mathcal{M}^{-1}D^2U (a) \left (v (\frac{n}{2}),  \mathbb{J}_nv (\frac{n}{2}),  w (\frac{n}{2}),  \mathbb{J}_nw (\frac{n}{2}) \right) \\
    =&\left (v (\frac{n}{2}),  \mathbb{J}_nv (\frac{n}{2}),  w (\frac{n}{2}),  \mathbb{J}_nw (\frac{n}{2}) \right) \begin{pmatrix}
        P_{\frac{n}{2}}-3Q_{\frac{n}{2}}&0&0&S_{\frac{n}{2}}\\
        0&P_{\frac{n}{2}}+3Q_{\frac{n}{2}}&-S_{\frac{n}{2}}&0\\
        0&-S_{\frac{n}{2}}&P_{\frac{n}{2}}-3Q_{\frac{n}{2}}&0\\
        S_{\frac{n}{2}}&0&0&P_{\frac{n}{2}}+3Q_{\frac{n}{2}}
    \end{pmatrix}.
\end{aligned}
$$
From the definition of $w(l)$, one can see that $w(\frac{n}{2})=0$, hence we obtain
$$
\begin{aligned}
    \mathcal{M}^{-1}D^2U (a) \left (v (\frac{n}{2}),  \mathbb{J}_nv (\frac{n}{2}) \right)
    =\left (v (\frac{n}{2}),  \mathbb{J}_nv (\frac{n}{2}) \right) \begin{pmatrix}
        P_{\frac{n}{2}}-3Q_{\frac{n}{2}}&0\\
        0&P_{\frac{n}{2}}+3Q_{\frac{n}{2}}
    \end{pmatrix}.
\end{aligned}
$$
and thus we have
\begin{equation*}
A ({\frac{n}{2}}) ^{-1} \frac{1}{\lambda} \mathcal{M}^{-1} D^{2} U (a) A ({\frac{n}{2}}) =\frac{1}{\lambda}\left (\begin{array}{cc}
a_{{\frac{n}{2}}} & 0 \\
0 & b_{{\frac{n}{2}}}
\end{array}\right).
\end{equation*}
\end{proof}
Based on the above Lemmas, we can readily establish the proof of Proposition \ref{lem-redu} as follows,
\begin{proof}
As we see in (\ref{redu}), we have
$$
A_{cen}^{-1}\frac{1}{\lambda}\mathcal{M}^{-1}D^2U(a)A_{cen}=O_{2},\ \ A(0)^{-1}\frac{1}{\lambda}\mathcal{M}^{-1}D^2U(a)A(0)=\mathcal{U}_{0}=\left(
                         \begin{array}{cc}
                           2 & 0 \\
                           0 & -1 \\
                         \end{array}
                       \right).
$$
For $1\leq l\leq [\frac{n}{2}]$, based on lemma \ref{lem-l=1}, Lemma \ref{lem-1-n} and Lemma \ref{lem-1-n/2}, we can compute $A(l)^{-1}\frac{1}{\lambda}\mathcal{M}^{-1}D^2U(a)A(l)$,
hence we obtain
$$
A^{-1}\frac{1}{\lambda}\mathcal{M}^{-1}D^2U(a)A=diag(O_{2},\mathcal{U}_{0},\mathcal{U}_{1},...,\mathcal{U}_{l},...,\mathcal{U}_{[\frac{n}{2}]}),\nonumber
$$
this completes the proof of Proposition \ref{lem-redu}.
\end{proof}
Now, we give the proof of Theorem \ref{redu-n-gon}.
\begin{proof}
From Theorem \ref{red-ERE}, the linear Hamiltonian system of $ERE$ have following form
\bea
\zeta'_{\mathfrak{e}}(\theta)=J_{4n}B(\theta)\zeta_{\mathfrak{e}}(\theta), \zeta_{\mathfrak{e}}(0)=I_{4n}.\nonumber
\eea
where
\bea B(\theta)=\left( \begin{array}{cccc} I_{2n} & -\mathbb{J}_{n} \\
\mathbb{J}_{n} & I_{2n}-\frac{I_{2n}+\frac{1}{\lambda}A^{T}D^2U(a)A}{1+\mathfrak{e}\cos\theta}
\end{array}\right),\quad \theta\in[0,2\pi],\nonumber
\eea
For the regular $n$-gon ERE, $A$ is constructed by (\ref{matrix A}), then from Proposition \ref{lem-redu}, we have
$$B(\theta)=B_{1}(\theta)\diamond B_{2}(\theta)\diamond B_{3}(\theta),$$ where
$$
B_{1}(\theta)=\left( \begin{array}{cccc} I_{2} & -\mathbb{J}_{1} \\
\mathbb{J}_{1} & I_{2}-\frac{I_{2}}{1+\mathfrak{e}\cos\theta}
\end{array}\right),\ \
B_{2}(\theta)=\left( \begin{array}{cccc} I_{2} & -\mathbb{J}_{1} \\
\mathbb{J}_{1} & I_{2}-\frac{I_{2}+\mathcal{U}_{0}
}{1+\mathfrak{e}\cos\theta}
\end{array}\right),\\
$$
\bea
B_{3}(\theta)=\mathcal{B}_1(\theta)\diamond\cdots\diamond \mathcal{B}_{[\frac{n}{2}]}(\theta), \nonumber\eea
with
\bea
\mathcal{B}_{l}(\theta)=\left( \begin{array}{cccc} I & -\mathbb{J}\\
\mathbb{J} & I-\frac{I+\mathcal{U}_{l}
}{1+\mathfrak{e}\cos\theta}  \end{array}\right), \, l=1,\cdots,[\frac{n}{2}].\nonumber  \eea
where $\mathcal{U}_{l}$ is given by Proposition \ref{lem-redu}. Here we omit the sub-indices of $I$ and $\mathbb{J}$, which are chosen to have the same dimensions as those of $\mathcal{U}_l$. This completes the proof of Theorem \ref{redu-n-gon}.
\end{proof}

\section{Spectral instability of the regular $n$-gon ERE}
Let consider the following operator
\bea \label{1-dim-oper}
\mathcal{A}(\delta,\mathfrak{e})=-\frac{d^{2}}{d\theta^{2}}-1+\frac{\delta}{1+\mathfrak{e}\cos \theta},\ \
\delta>1, \ \ \mathfrak{e}\in[0,1),\eea
which is a self-adjoint operator with domain
$$\bar{D}_{1}(\omega, 2\pi)=\{y\in W^{2,2}([0,2\pi],\mathbb{C})|
y(2\pi)=\omega y(0),\dot{y}(2\pi)=\omega\dot{y}(0)\},$$
where $\omega\in\mathbb{U}$ and $W^{2,2}([0,2\pi])$ is the usual Sobolev space. Also, we consider the fundamental matrix $\gamma_{\delta,\mathfrak{e}}(\theta)$ of the first order linear Hamiltonian system corresponding to $\mathcal{A}(\delta,\mathfrak{e})$,
it satisfies
\bea \label{subha}
\gamma'_{\delta,\mathfrak{e}}(\theta)=J_{2}
\left(
\begin{array}{cc}
    1 & 0 \\
    0 & 1-\frac{\delta}{1+\mathfrak{e}\cos \theta}\\
  \end{array}\right)\gamma_{\delta,\mathfrak{e}}(\theta),\ \
\gamma_{\delta,\mathfrak{e}}(0)=I_{2}.
\eea
In order to proof Theorem \ref{unstable}, we need the following useful lemma. This lemma is first proved by the first author in \cite{O} in the study the hyperbolicity of the
Lagrange solution by the Maslov-type index theory.
\begin{lem}(\cite{O})\label{lem-hyp}
For any $\delta>1, \mathfrak{e}\in[0, 1)$ and $\omega\in \mathbb{C}$, the operator
$\mathcal{A}(\delta,\mathfrak{e})$ is a strictly positive operator on its domain $\bar{D}_{1}(\omega,2\pi)$. The monodromy matrix $\gamma_{\delta, \mathfrak{e}}(2\pi)$ of
the linear Hamiltonian system (\ref{subha}) is hyperbolic, more precisely, the eigenvalues of $\gamma_{\delta, \mathfrak{e}}(2\pi)$ are $\lambda, \lambda^{-1}$, where $\lambda>0$.
\end{lem}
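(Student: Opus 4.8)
The plan is to reduce the whole statement to one quantitative fact — that $\mathcal{A}(1,\mathfrak{e})$ is nonnegative on every $\bar{D}_{1}(\omega,2\pi)$, $\omega\in\mathbb{U}$ — and to prove the latter by an explicit factorization of the operator. For $y\in\bar{D}_{1}(\omega,2\pi)$ with $|\omega|=1$, integrating by parts once and noting that the boundary term $[\,y'\bar y\,]_{0}^{2\pi}=(|\omega|^{2}-1)y'(0)\overline{y(0)}$ vanishes, one obtains
\[
\langle\mathcal{A}(\delta,\mathfrak{e})y,y\rangle=\int_{0}^{2\pi}|y'|^{2}\,d\theta+\int_{0}^{2\pi}\Big(\frac{\delta}{1+\mathfrak{e}\cos\theta}-1\Big)|y|^{2}\,d\theta .
\]
Splitting off the borderline exponent, $\mathcal{A}(\delta,\mathfrak{e})=\mathcal{A}(1,\mathfrak{e})+(\delta-1)(1+\mathfrak{e}\cos\theta)^{-1}$, so that once $\mathcal{A}(1,\mathfrak{e})\ge 0$ is known, then for $y\neq 0$
\[
\langle\mathcal{A}(\delta,\mathfrak{e})y,y\rangle\ \ge\ (\delta-1)\int_{0}^{2\pi}\frac{|y|^{2}}{1+\mathfrak{e}\cos\theta}\,d\theta\ \ge\ \frac{\delta-1}{1+\mathfrak{e}}\,\|y\|_{L^{2}}^{2}\ >\ 0 ,
\]
which is the asserted strict positivity.

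The key step is the factorization of $\mathcal{A}(1,\mathfrak{e})$. Here I would use the observation that $\phi(\theta):=1+\mathfrak{e}\cos\theta$ is a strictly positive ($\phi\ge 1-\mathfrak{e}>0$), $2\pi$-periodic solution of $\mathcal{A}(1,\mathfrak{e})\phi=0$: indeed $-\phi''-\phi+\phi/(1+\mathfrak{e}\cos\theta)=\mathfrak{e}\cos\theta-(1+\mathfrak{e}\cos\theta)+1=0$. (This is not an accident: $\delta=1$ is precisely the channel coming from the translational symmetry of the $n$-body problem, and the identity reflects the Binet relation $u''+u=1$ for $u=1+\mathfrak{e}\cos\theta$.) Putting $g:=\phi'/\phi=-\mathfrak{e}\sin\theta/(1+\mathfrak{e}\cos\theta)$, which is smooth and $2\pi$-periodic, the Riccati identity $g'+g^{2}=\phi''/\phi=-1+(1+\mathfrak{e}\cos\theta)^{-1}$ yields the operator factorization $\mathcal{A}(1,\mathfrak{e})=-(\tfrac{d}{d\theta}+g)(\tfrac{d}{d\theta}-g)$. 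Integrating by parts once more and using the $2\pi$-periodicity of $g$ to kill the boundary term for $y\in\bar{D}_{1}(\omega,2\pi)$, $\omega\in\mathbb{U}$, gives $\langle\mathcal{A}(1,\mathfrak{e})y,y\rangle=\int_{0}^{2\pi}|y'-gy|^{2}\,d\theta\ge 0$, as needed.

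It remains to transfer this to $\gamma_{\delta,\mathfrak{e}}(2\pi)$. Rewriting $\mathcal{A}(\delta,\mathfrak{e})y=0$ as the first-order system (\ref{subha}), a nonzero element of $\ker\mathcal{A}(\delta,\mathfrak{e})|_{\bar{D}_{1}(\omega,2\pi)}$ is exactly an eigenvector of $M:=\gamma_{\delta,\mathfrak{e}}(2\pi)$ with eigenvalue $\omega$; hence the strict positivity just proved forces $\sigma(M)\cap\mathbb{U}=\emptyset$, and since $M\in\Sp(2)=\mathrm{SL}(2,\mathbb{R})$ this means $M$ is hyperbolic with real eigenvalues $\lambda^{\pm 1}$, $\lambda\neq\pm 1$. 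To pin down $\lambda>0$, equivalently $\mathrm{tr}\,M>2$: the map $\mathfrak{e}\mapsto\mathrm{tr}\,\gamma_{\delta,\mathfrak{e}}(2\pi)$ is continuous on the connected set $[0,1)$ and, by the hyperbolicity just established, never equals $\pm 2$; at $\mathfrak{e}=0$ the system is autonomous with eigenvalues $e^{\pm 2\pi\sqrt{\delta-1}}$, so $\mathrm{tr}\,\gamma_{\delta,0}(2\pi)=2\cosh(2\pi\sqrt{\delta-1})>2$, whence $\mathrm{tr}\,\gamma_{\delta,\mathfrak{e}}(2\pi)>2$ throughout $[0,1)$, i.e.\ the eigenvalues are $\lambda,\lambda^{-1}$ with $\lambda>0$.

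I expect the only non-routine ingredient to be spotting the explicit positive periodic solution $\phi=1+\mathfrak{e}\cos\theta$ of $\mathcal{A}(1,\mathfrak{e})\phi=0$; once it is available, the factorization makes the positivity transparent and everything else is standard Sturm--Liouville and $\mathrm{SL}(2,\mathbb{R})$ bookkeeping. (Equivalently, positivity of $\mathcal{A}(\delta,\mathfrak{e})$ on all $\bar{D}_{1}(\omega,2\pi)$ can be packaged as the vanishing of a Maslov-type index, which is the route of \cite{O}; the factorization above is simply a self-contained way to obtain it.)
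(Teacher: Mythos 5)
Your argument is correct, and it is worth noting that the paper itself offers no proof of this lemma: it is quoted verbatim from \cite{O}, where the positivity of $\mathcal{A}(\delta,\mathfrak{e})$ on $\bar{D}_1(\omega,2\pi)$ is obtained through Maslov-type index theory (computing the $\omega$-index of the associated symplectic path and invoking the index--positivity correspondence). Your route is genuinely different and more elementary: the observation that $\phi=1+\mathfrak{e}\cos\theta$ is a positive $2\pi$-periodic element of $\ker\mathcal{A}(1,\mathfrak{e})$ gives, via the Riccati substitution $g=\phi'/\phi$, the factorization $\mathcal{A}(1,\mathfrak{e})=L^{*}L$ with $L=\tfrac{d}{d\theta}-g$, whence $\mathcal{A}(1,\mathfrak{e})\ge 0$ on every $\bar{D}_1(\omega,2\pi)$, $\omega\in\mathbb{U}$ (the boundary terms cancel exactly because $g$ is periodic and $|\omega|=1$), and adding the strictly positive multiplication operator $(\delta-1)(1+\mathfrak{e}\cos\theta)^{-1}$ yields the uniform lower bound $\tfrac{\delta-1}{1+\mathfrak{e}}\|y\|_{L^2}^2$. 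The passage to the monodromy matrix (kernel on $\bar D_1(\omega,2\pi)$ $\Leftrightarrow$ eigenvalue $\omega$ of $\gamma_{\delta,\mathfrak{e}}(2\pi)$, then the $\mathrm{SL}(2,\mathbb{R})$ trace/continuity argument pinning down $\lambda>0$ from the autonomous case $\mathfrak{e}=0$) is standard and carried out correctly; all the computations I checked ($\mathcal{A}(1,\mathfrak{e})\phi=0$, the Riccati identity, the eigenvalues $e^{\pm2\pi\sqrt{\delta-1}}$ at $\mathfrak{e}=0$) are right. What your approach buys is a self-contained, index-free proof that also makes transparent \emph{why} $\delta=1$ is the borderline value (it is the translational channel, where $\phi$ itself lies in the kernel at $\omega=1$, so strict positivity genuinely fails); what the index-theoretic route of \cite{O} buys is a framework that generalizes to the higher-dimensional operators $\mathcal{F}(\mathfrak{e},\beta)$ used later in the paper, where no explicit kernel element is available. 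The only cosmetic point is that the lemma as stated says $\omega\in\mathbb{C}$ while the domain $\bar D_1(\omega,2\pi)$ and your boundary-term cancellations require $\omega\in\mathbb{U}$; this is how the lemma is used throughout the paper, so the restriction is harmless.
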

Now, we give the proof of Theorem \ref{unstable}.
\begin{proof}
From Theorem \ref{redu-n-gon}, we know the fundamental solution of the essential part of the regular $n$-gon system in Theorem \ref{red-ERE}
satisfies
\bea \eta'_e(\theta)=JB_3(\theta)\eta_e(\theta),\ \ \eta_e(0)=I_{2n}. \nonumber
\eea
and we have the decomposition,
$$
\eta_{e}(\theta)=\eta_{1,\mathfrak{e}}(\theta)\diamond\eta_{2,\mathfrak{e}}(\theta)\diamond\cdots\eta_{[\frac{n}{2}],\mathfrak{e}}(\theta)
$$
where
$$
\eta'_{l,\mathfrak{e}}(\theta)=J\mathcal{B}_l(\theta)\eta_{1,e}(\theta),\ \ \eta_{l,\mathfrak{e}}(0)=I_{2}.
$$
\bea
\mathcal{B}_{l}(\theta)=\left( \begin{array}{cccc} I & -\mathbb{J}\\
\mathbb{J} & I-\frac{I+\mathcal{U}_{l}
}{1+\mathfrak{e}\cos\theta}  \end{array}\right), \, l=1,\cdots,[\frac{n}{2}].\nonumber  \eea
where $\mathcal{U}_{l}$ is given by Proposition \ref{lem-redu}. Here we omit the sub-indices of $I$ and $\mathbb{J}$, which are chosen to have the same dimensions as those of $\mathcal{U}_l$.

Consider $l=1$,
$$
\eta'_{1,\mathfrak{e}}(\theta)=J\mathcal{B}_1(\theta)\eta_{1,e}(\theta),\ \ \eta_{1,\mathfrak{e}}(0)=I_{2}.
$$
with
\bea
\mathcal{B}_{1}(\theta)=\left( \begin{array}{cccc} I_{2} & -\mathbb{J}_{1}\\
\mathbb{J}_{1} & I_{2}-\frac{I_{2}+\mathcal{U}_{1}
}{1+e\cos\theta}  \end{array}\right),\ \
\mathcal{U}_{1}=\frac{1}{\lambda}\left(\begin{array}{cc}z & 0\\
0 & z
\end{array}\right),\nonumber
\eea
and $z=\sum_{j=1}^{n-1}\frac{1}{2d^{3}_{nj}}(1-\cos2\theta_{j}), \lambda=\frac{1}{4}\sum_{j=1}^{n-1}\csc \frac{\pi j}{n}$. Let $\hat{\eta}_{1,\mathfrak{e}}=\left(
                            \begin{array}{cc}
                              \mathfrak{R}(\theta) & O \\
                              O & \mathfrak{R}(\theta)\\
                            \end{array}
                          \right)\eta_{1,\mathfrak{e}}(\theta)$, then
\bea
\hat{\eta}'_{\mathfrak{e}}(\theta)=J_{4}\left( \begin{array}{cccc} I_{2} & O_{2} \\
O_{2} & I_{2}-\frac{I_{2}+\mathcal{U}_{1}}{1+\mathfrak{e}\cos\theta}
\end{array}\right)\hat{\eta}_{1,\mathfrak{e}}(\theta),\ \ \hat{\eta}_{1,\mathfrak{e}}(0)=I_{4}.\nonumber
\eea
Then
\bea
\hat{\eta}_{\mathfrak{e}}(\theta)=\gamma_{\delta,\mathfrak{e}}(\theta)\diamond\gamma_{\delta,\mathfrak{e}}(\theta)\nonumber
\eea
where $\gamma_{\delta,\mathfrak{e}}(\theta)$ satisfies (\ref{subha}) with $\delta=1+\frac{z}{\lambda}$, since $\frac{z}{\lambda}>0$, from Lemma \ref{lem-hyp}, we obtain
$\hat{\eta}_{1,\mathfrak{e}}(2\pi)$ is hyperbolic and so is $\eta_{1,\mathfrak{e}}(2\pi)$, hence $\eta_{\mathfrak{e}}(2\pi)$ is spectral unstable and
for $n\geq3$ and any $\mathfrak{e}\in[0,1)$, the regular $n$-gon system is spectral unstable. This complete proof of Theorem \ref{unstable}.
\end{proof}
\section{Hyperbolicity Estimation}
As mentioned in the previous section, Moeckel \cite{Moe1} shows that the regular $n$-gon ERE is spectral unstable for $n\geq3$ and $\mathfrak{e}=0$, hence our Theorem \ref{unstable} generalize
this unstable result for any $\mathfrak{e}\in[0,1)$. Moreover, \cite{Moe1} also obtain the hyperbolicity of the regular $3, 4$ and $5$-gon ERE for $\mathfrak{e}=0$.
It's natural to guess the hyperbolicity also holds for any $\mathfrak{e}\in[0,1)$.  Although, in [4] and [6], the authors has already established the hyperbolicity of the regular
$3$-gon and $4$-gon for any $e\in[0,1)$, respectively, the hyperbolicity of the regular $5$-gon is still remains open, it presents greater complexity than the $n = 3,4$ cases. In this section, we will give a unified proof of the hyperbolicity of the regular $3, 4$ and $5$-gon ERE for any eccentricity. To achieve this goal, we introduce the following $\beta$-system (\ref{beta-system}) and we will see that it is related to the system of the Lagrange solution.
\subsection{Hyperbolicity of the $\beta$-system}\label{subsec beta}

\bea
\gamma_{\beta,\mathfrak{e}}^{\prime} (\theta) =J \mathcal{B}_{\beta,\mathfrak{e}} (\theta) \gamma_{\beta,\mathfrak{e}},\ \ \gamma_{\beta,\mathfrak{e}} (0) =I_{4} .\label{beta-system}
\eea
where
$$
\mathcal{B}_{\beta,\mathfrak{e}} (\theta) =\left (\begin{array}{cc}
I_{2} & -J_{2} \\
J_{2} & I_{2}-\frac{\mathcal{R}_{\beta}}{1+\mathfrak{e} \cos \theta}
\end{array}\right),\ \ \mathcal{R}_{\beta}=\frac{3}{2}I_{2}+\beta\left(
                                                                   \begin{array}{cc}
                                                                     -1 & 0 \\
                                                                     0 & 1 \\
                                                                   \end{array}
                                                                 \right),\ \ \beta\geq0.
$$
Its hyperbolicity relates to the following Sturm-Liouville operators,
\bea\label{ope-est}
\mathcal{F}(\mathfrak{e},\beta)=-\frac{d^2}{d\theta^2}I_{2}-2J_{2}\frac{d}{d\theta}+\frac{\frac{3}{2}I_{2}}{1+\mathfrak{e}\cos\theta}+\frac{\beta\left(\begin{array}{cccc}-1 & 0\\ 0 & 1\end{array}\right)}{1+\mathfrak{e}\cos \theta}, \ \ \beta>0,
\eea
with domain
$$\bar{D}_{2}(\omega, 2\pi)=\{y\in W^{2,2}([0,2\pi],\mathbb{C}^{2})\,|\,
y(2\pi)=\omega y(0),\dot{y}(2\pi)=\omega\dot{y}(0)\}.$$
\begin{rem}\label{rem-Lag}
This operator is very important, it is related to the Lagrange solution which generated from the Lagrange equilateral triangle central
configuration (see Figure (a) and \cite{HLS}). More precisely, the parameter $\beta$ in
Lagrange solution is taken by $\beta=\frac{\sqrt{9-\beta_{L}}}{2}$, where
$$
\beta_{L}=\frac{27(m_{1}m_{2}+m_{1}m_{3}+m_{2}m_{3})}{(m_{1}+m_{2}+m_{3})^2}\in[0,9]
$$
is the mass parameter depends on the masses $m_{1}, m_{2}, m_{3}$ of the three body in Lagrange solution. It corresponds the following linear Hamiltonian system of Lagrange
solution.
\bea\label{Lag-system}
\gamma'_{\text{Lag}}(\theta)=J_{4}\mathcal{B}_{\text{Lag}}(\theta)\gamma_{\text{Lag}}(\theta),\ \ \gamma_{\text{Lag}}(0)=I_{4}.
\eea
where
\bea
\mathcal{B}_{\text{Lag}}(\theta)=\left(
                                          \begin{array}{cccc}
                                            1 & 0 & 0 & 1 \\
                                            0 & 1 & -1 & 0 \\
                                            0 & -1 & 1-\frac{3-\sqrt{9-\beta_{L}}}{2(1+\mathfrak{e}\cos\theta)} & 0 \\
                                            1 & 0 & 0 & 1-\frac{3+\sqrt{9-\beta_{L}}}{2(1+\mathfrak{e}\cos\theta)} \\
                                          \end{array}
                                        \right).\nonumber
\eea
For the Lagrange solution, we have following useful lemma,
\begin{lem}(\cite{HLS})\label{Hy-Po}
The monodromy matrix $\gamma_{\text{Lag}}(2\pi)$ is hyperbolic for some $(\beta_{L}, \mathfrak{e})$ if and only if the operator
$\mathcal{F}(\mathfrak{e},\frac{\sqrt{9-\beta_{L}}}{2})$ is positive in $\bar{D}_{2}(\omega, 2\pi)$ for any $\omega\in\mathbb{U}$.
\end{lem}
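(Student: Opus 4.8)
\emph{Proof plan.} First I would observe that, with $\beta=\frac{\sqrt{9-\beta_{L}}}{2}\in[0,\frac32]$, the two lower‑diagonal entries of $\mathcal B_{\text{Lag}}(\theta)$ are $\frac{3\mp\sqrt{9-\beta_{L}}}{2}=\frac32\mp\beta$, so $\mathcal B_{\text{Lag}}(\theta)=\mathcal B_{\beta,\mathfrak e}(\theta)$ and $\gamma_{\text{Lag}}=\gamma_{\beta,\mathfrak e}$; this linear Hamiltonian system is exactly the Legendre dual of the Sturm operator $\mathcal F(\mathfrak e,\beta)=-\tfrac{d^{2}}{d\theta^{2}}I_2-2J_2\tfrac{d}{d\theta}+\tfrac{\mathcal R_\beta}{1+\mathfrak e\cos\theta}$ (whose leading coefficient is $I_{2}>0$). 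Concretely, a solution $\zeta=(x,p)$ of $\zeta'=J_4\mathcal B_{\text{Lag}}\zeta$ satisfies $x=\dot p+J_{2}p$ and $\mathcal F(\mathfrak e,\beta)p=0$, and conversely any $p$ solving $\mathcal F(\mathfrak e,\beta)p=0$ lifts to such a $\zeta$; hence $\zeta(0)\mapsto p$ gives a linear isomorphism $\ker\bigl(\gamma_{\text{Lag}}(2\pi)-\omega I_{4}\bigr)\cong\ker\bigl(\mathcal F(\mathfrak e,\beta)|_{\bar D_{2}(\omega,2\pi)}\bigr)$ for every $\omega\in\mathbb U$. The direction ``$\Leftarrow$'' is then immediate: if $\mathcal F(\mathfrak e,\beta)$ is positive on every $\bar D_{2}(\omega,2\pi)$, all these kernels are trivial, so $\gamma_{\text{Lag}}(2\pi)$ has no multiplier on $\mathbb U$ and is hyperbolic.

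For ``$\Rightarrow$'', assuming $\gamma_{\text{Lag}}(2\pi)$ hyperbolic, the isomorphism above gives $\ker\bigl(\mathcal F(\mathfrak e,\beta)|_{\bar D_{2}(\omega,2\pi)}\bigr)=0$ for all $\omega\in\mathbb U$. Writing $\omega=e^{i\alpha}$ and conjugating by $y\mapsto e^{-i\alpha\theta/2\pi}y$, these operators become a continuous (indeed analytic) family of self‑adjoint, bounded‑below, compact‑resolvent operators on the fixed space $\bar D_{2}(1,2\pi)$, none of which has $0$ in its spectrum; hence their Morse index $m^{-}\bigl(\mathcal F(\mathfrak e,\beta)|_{\bar D_{2}(\omega,2\pi)}\bigr)$ is locally constant in $\alpha$ and therefore constant on the connected set $\mathbb U$. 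So it suffices to prove $m^{-}\bigl(\mathcal F(\mathfrak e,\beta)|_{\bar D_{2}(1,2\pi)}\bigr)=0$, which, together with the vanishing of the kernel, gives positivity for every $\omega$.

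To compute this periodic Morse index I would pass to a non‑rotating frame. With the unitary $U\colon p(\theta)\mapsto R(\theta)p(\theta)$, which preserves $\bar D_{2}(\omega,2\pi)$, and using $R'(\theta)=R(\theta)J_{2}$ and $R(\theta)\,\mathrm{diag}(-1,1)\,R(-\theta)=-\hat R(2\theta)$, the first‑order term disappears:
\[
U\,\mathcal F(\mathfrak e,\beta)\,U^{-1}=\widehat{\mathcal F}:=-\tfrac{d^{2}}{d\theta^{2}}I_{2}-I_{2}+\frac{\tfrac32 I_{2}-\beta\hat R(2\theta)}{1+\mathfrak e\cos\theta}.
\]
Identifying $\mathbb R^{2}\cong\mathbb C$ (so $R(\theta)$ acts as $z\mapsto e^{i\theta}z$ and $\hat R(2\theta)$ as $z\mapsto e^{2i\theta}\bar z$), the periodic quadratic form of $\widehat{\mathcal F}$ is
\[
\langle\widehat{\mathcal F}w,w\rangle=\int_{0}^{2\pi}\Bigl(|\dot w|^{2}-|w|^{2}+\tfrac{(3/2)|w|^{2}}{1+\mathfrak e\cos\theta}\Bigr)d\theta-\beta\int_{0}^{2\pi}\frac{\operatorname{Re}\bigl(e^{-2i\theta}w^{2}\bigr)}{1+\mathfrak e\cos\theta}\,d\theta .
\]
The first integral equals $\langle\mathcal A(\tfrac32,\mathfrak e)w,w\rangle$, which is strictly positive by Lemma \ref{lem-hyp} (since $\tfrac32>1$); so everything reduces to dominating the oscillatory term $-\beta\int\frac{\operatorname{Re}(e^{-2i\theta}w^{2})}{1+\mathfrak e\cos\theta}\,d\theta$ by that form, uniformly in $\beta\in[0,\frac32]$ and $\mathfrak e\in[0,1)$.

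\textbf{This last estimate is the main obstacle.} The crude bound $|\operatorname{Re}(e^{-2i\theta}w^{2})|\le|w|^{2}$ only yields $\langle\widehat{\mathcal F}w,w\rangle\ge\langle\mathcal A(\tfrac32-\beta,\mathfrak e)w,w\rangle$, which is nonnegative merely when $\tfrac32-\beta\ge 1+\mathfrak e$; to reach all $\beta\le\frac32$ one must exploit the cancellation carried by $e^{-2i\theta}$. Expanding $w=\sum_{k}c_{k}e^{ik\theta}$ and the kernel $\tfrac1{1+\mathfrak e\cos\theta}=\tfrac1{\sqrt{1-\mathfrak e^{2}}}\sum_{n}(-r)^{|n|}e^{in\theta}$, $r=\tfrac{1-\sqrt{1-\mathfrak e^{2}}}{\mathfrak e}\in[0,1)$, turns both integrals into explicit quadratic forms in $(c_{k})$, and the required inequality is that the diagonally dominant positive form dominates the off‑diagonal one precisely when $\beta\le\frac32$ (equivalently $\beta_{L}\ge0$) — the analytic core of \cite{HLS}. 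Granting it, $m^{-}\bigl(\mathcal F(\mathfrak e,\beta)|_{\bar D_{2}(1,2\pi)}\bigr)=0$; combined with the $\omega$‑independence above and the ``$\Leftarrow$'' direction, the lemma follows.
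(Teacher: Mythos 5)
First, note that the paper does not prove this lemma at all: it is quoted verbatim from \cite{HLS}. Measured against that, your write-up is an attempt at a self-contained proof, and its architecture is sound and correctly executed as far as it goes: the identification $\mathcal B_{\text{Lag}}=\mathcal B_{\beta,\mathfrak e}$ with $\beta=\frac{\sqrt{9-\beta_L}}{2}$, the kernel isomorphism $\ker\bigl(\gamma_{\text{Lag}}(2\pi)-\omega I_4\bigr)\cong\ker\bigl(\mathcal F(\mathfrak e,\beta)|_{\bar D_2(\omega,2\pi)}\bigr)$ obtained by eliminating the first block via $\xi=\eta'+J_2\eta$, the resulting ``$\Leftarrow$'' direction, the constancy of the Morse index in $\omega$ under the hyperbolicity hypothesis, and the rotating-frame identity $U\,\mathcal F(\mathfrak e,\beta)\,U^{-1}=-\frac{d^2}{d\theta^2}I_2-I_2+\frac{\frac32I_2-\beta\hat R(2\theta)}{1+\mathfrak e\cos\theta}$ all check out.

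The genuine gap is the step you explicitly defer: the claim that $m^{-}\bigl(\mathcal F(\mathfrak e,\beta)|_{\bar D_{2}(1,2\pi)}\bigr)=0$ for all $(\beta,\mathfrak e)\in[0,\tfrac32]\times[0,1)$. That statement is not a technical loose end; it \emph{is} the analytic content of the lemma, and your sketch of it does not go through as described. For $\mathfrak e>0$ the expansion $\frac{1}{1+\mathfrak e\cos\theta}=\frac{1}{\sqrt{1-\mathfrak e^{2}}}\sum_{n}(-r)^{|n|}e^{in\theta}$ couples \emph{every} pair of Fourier coefficients $c_k,c_{2-k+n}$ in the oscillatory term, the putative ``diagonal dominance'' is nowhere verified, and the constants degenerate as $\mathfrak e\to1$ (where $r\to1$), which is exactly the regime the lemma must cover. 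Moreover the inequality you need is sharp: already at $\mathfrak e=0$, $\beta=\tfrac32$ the form $\int\bigl(|w'|^{2}+\tfrac12|w|^{2}-\tfrac32\operatorname{Re}(e^{-2i\theta}w^{2})\bigr)d\theta$ has null directions (e.g. $w=e^{i\theta}$ and $w=3+e^{2i\theta}$), so any ``domination'' is non-strict and there is no slack to absorb errors for $\mathfrak e>0$ by a crude bound. In \cite{HLS} the vanishing of the $\omega=1$ index is proved by a substantially more involved argument, which is precisely why the present paper cites the lemma instead of reproving it. As written, your proposal is a correct reduction of the lemma to this estimate, plus an acknowledged IOU for the estimate itself; it is therefore not a complete proof.
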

As seen from the correspondence between system (\ref{beta-system}) and system (\ref{Lag-system}), we obtain
\begin{cor}\label{Hy-Po2}
For $\beta\in[0,3/2]$, the monodromy matrix $\gamma_{\beta,\mathfrak{e}}(2\pi)$ is hyperbolic for some $(\beta, \mathfrak{e})$ if and only if the operator
$\mathcal{F}(\mathfrak{e},\beta)$ is positive in $\bar{D}_{2}(\omega, 2\pi)$ for any $\omega\in\mathbb{U}$.
\end{cor}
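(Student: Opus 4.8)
The plan is to observe that, after the reparametrization $\beta_{L}=9-4\beta^{2}$, the $\beta$-system (\ref{beta-system}) coincides \emph{verbatim} with the linearized Lagrange system (\ref{Lag-system}), so that Corollary \ref{Hy-Po2} is nothing but a translation of Lemma \ref{Hy-Po}. First I would record that the map $\beta_{L}\mapsto\frac{\sqrt{9-\beta_{L}}}{2}$ carries $[0,9]$ bijectively onto $[0,3/2]$, with inverse $\beta\mapsto\beta_{L}=9-4\beta^{2}$; this is exactly why the statement is restricted to $\beta\in[0,3/2]$. So fix $\beta\in[0,3/2]$ and set $\beta_{L}=9-4\beta^{2}\in[0,9]$, so that $\beta=\frac{\sqrt{9-\beta_{L}}}{2}$.

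Next I would compare the coefficient matrices $\mathcal{B}_{\beta,\mathfrak{e}}(\theta)$ and $\mathcal{B}_{\text{Lag}}(\theta)$ block by block: the upper-left blocks are both $I_{2}$, the two off-diagonal blocks are $-J_{2}$ and $J_{2}$ in each case, and the lower-right blocks agree because $\mathcal{R}_{\beta}=\frac{3}{2}I_{2}+\beta\,\mathrm{diag}(-1,1)=\mathrm{diag}\!\left(\tfrac{3-\sqrt{9-\beta_{L}}}{2},\,\tfrac{3+\sqrt{9-\beta_{L}}}{2}\right)$. Hence $\mathcal{B}_{\beta,\mathfrak{e}}(\theta)=\mathcal{B}_{\text{Lag}}(\theta)$ for all $\theta\in[0,2\pi]$, the two fundamental solutions are identical, $\gamma_{\beta,\mathfrak{e}}(\theta)\equiv\gamma_{\text{Lag}}(\theta)$, and in particular $\gamma_{\beta,\mathfrak{e}}(2\pi)$ is hyperbolic if and only if $\gamma_{\text{Lag}}(2\pi)$ is. Invoking Lemma \ref{Hy-Po} with this $\beta_{L}$ then identifies the latter with the positivity of $\mathcal{F}(\mathfrak{e},\frac{\sqrt{9-\beta_{L}}}{2})=\mathcal{F}(\mathfrak{e},\beta)$ on $\bar{D}_{2}(\omega,2\pi)$ for every $\omega\in\mathbb{U}$, which is exactly the asserted equivalence.

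I do not expect any genuine analytic difficulty here: all the real work is packaged inside Lemma \ref{Hy-Po} (imported from \cite{HLS}), which is the nontrivial correspondence between hyperbolicity of the monodromy matrix and positivity of the associated Sturm--Liouville operator. The only points deserving care are bookkeeping — matching the sign conventions for $J_{2}$ between the two presentations, confirming that the operator $\mathcal{F}(\mathfrak{e},\beta)$ of (\ref{ope-est}) is indeed the second-order reduction attached to (\ref{beta-system}) in the same way the Lagrange operator is attached to (\ref{Lag-system}), and stressing that this equivalence truly fails once $\beta>3/2$, since then no admissible mass parameter $\beta_{L}\in[0,9]$ corresponds to it and the link with the Lagrange solution is lost.
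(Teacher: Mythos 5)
Your proposal is correct and follows exactly the route the paper takes: the paper's own justification of Corollary \ref{Hy-Po2} is precisely the observation that under $\beta=\frac{\sqrt{9-\beta_{L}}}{2}$ (equivalently $\beta_{L}=9-4\beta^{2}$, carrying $[0,9]$ onto $[0,3/2]$) the coefficient matrix $\mathcal{B}_{\beta,\mathfrak{e}}(\theta)$ coincides with $\mathcal{B}_{\text{Lag}}(\theta)$, so the equivalence is inherited verbatim from Lemma \ref{Hy-Po}. Your block-by-block verification simply makes explicit the correspondence the paper invokes in one line.
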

\end{rem}
In general, it's very difficult to estimate the hyperbolic region of the system (\ref{beta-system}) for any $\mathfrak{e}\in[0,1)$.
The first non-trivial analytic result was obtained by \cite{HLS}, they
prove it's hyperbolicity for $(\beta,\mathfrak{e})\in\{0\}\times[0,1)$, this implies the hyperbolicity of the regular $3$-gon ERE for any eccentricity. In \cite{O}, the first author prove it's hyperbolicity for $(\beta,\mathfrak{e})\in[0,0.5)\times[0,1)$. Further, in \cite{HO}, Hu and the first author further extended the hyperbolic region for $(\beta,\mathfrak{e})\in[0,0.7237)\times[0,1)$
, this implies the hyerpbolicity of the regular $4$-gon ERE for any eccentricity, but this result is not enough to obtain the hyperbolicity of the
regular $5$-gon ERE, hence it is still an unsolved problem. In order to get the hyperbolicity of the regular $5$-gon ERE, we will later see that we need to establish hyperbolicity for $(\beta,\mathfrak{e})\in[0, 1.1459)\times[0,1)$.

While the numerical stability diagram visually suggests extensive hyperbolic regions, numerical methods are fundamentally limited¡ªthey cannot theoretically verify
hyperbolicity over infinite points. Also, the errors of the numerical methods is hard to control when the eccentricity is closed to one, because the linear equation (\ref{beta-system}) has singularity at $\mathfrak{e}=1$. Based on the reason, we further develop the following important lemmas and obtain our Theorem \ref{hyper}, which tells us that if we know
the hyperbolicity of system (\ref{beta-system}) for some fixed points $(\beta_{0}, \mathfrak{e}_{0})$, then we can obtain a large hyperbolic region. By  carefully selecting appropriate parameters $(\beta_{0}, \mathfrak{e}_{0})$, we only need to check the hyperbolicity of the system at finite points $(\beta_{0}, \mathfrak{e}_{0})$.
\begin{lem}\label{lemposi1} If $\mathcal{F}(\mathfrak{e},\beta)$ is positive in $\bar{D}_{2}(\omega,2\pi)$ for some $\beta_{0}>0$ and $\mathfrak{e}_{0}\geq 0$ then
$\mathcal{F}(\mathfrak{e},\beta)$ is positive in $\bar{D}_{2}(\omega,2\pi)$ for any parameter $(\beta, \mathfrak{e})$ which satisfy the following inequality
\bea
0\leq\beta<\frac{1+\mathfrak{e}}{1+3\mathfrak{e}-2\mathfrak{e}_{0}}\beta_{0},\ \ \mathfrak{e}_{0}\leq\mathfrak{e}.\label{esit1}
\eea
\end{lem}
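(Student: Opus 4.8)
The plan is to recast the statement as a comparison of the two Rayleigh quotients and then as a handful of elementary one–variable inequalities. Since $\bar D_2(\omega,2\pi)$ is fixed, positivity of the self–adjoint operator $\mathcal F(\mathfrak e,\beta)$ on it is equivalent to positivity of the quadratic form
\[
q_{\mathfrak e,\beta}(y)=\int_0^{2\pi}\!\Big(|y'+J_2y|^2-|y|^2+\frac{\langle \mathcal R_\beta y,y\rangle}{1+\mathfrak e\cos\theta}\Big)d\theta,\qquad \mathcal R_\beta=\tfrac32I_2+\beta\begin{pmatrix}-1&0\\0&1\end{pmatrix},
\]
which I would obtain from $-\frac{d^2}{d\theta^2}I_2-2J_2\frac{d}{d\theta}=\big(\frac{d}{d\theta}+J_2\big)^{*}\big(\frac{d}{d\theta}+J_2\big)-I_2$. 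In the region under consideration $\beta<\beta_0$ (because $\frac{1+\mathfrak e}{1+3\mathfrak e-2\mathfrak e_0}\le1$ when $\mathfrak e\ge\mathfrak e_0$), and we may take $\beta\le\tfrac32$, so $\mathcal R_\beta\succeq0$ and both eigendirections of the potential are nonnegative. It is also convenient, though not essential, to conjugate first by the $2\pi$–periodic rotation $e^{-J_2\theta}$, which removes the first–order term and shows the potential to be $\frac{(\frac32-\beta)I_2+2\beta\,v(\theta)v(\theta)^{T}}{1+\mathfrak e\cos\theta}$ with $v(\theta)=(\sin\theta,-\cos\theta)^{T}$, in which $2\beta\,vv^{T}\succeq0$ is rank one and $v$ is independent of $(\mathfrak e,\beta)$.

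The key input is the elementary sandwich, valid for $\mathfrak e\ge\mathfrak e_0$ and all $\theta$,
\[
\frac{1+\mathfrak e_0}{1+\mathfrak e}\cdot\frac1{1+\mathfrak e_0\cos\theta}\ \le\ \frac1{1+\mathfrak e\cos\theta}\ \le\ \frac{1-\mathfrak e_0}{1-\mathfrak e}\cdot\frac1{1+\mathfrak e_0\cos\theta},
\]
where each inequality is proved by cross–multiplying and observing that the numerator of the difference factors as $(\mathfrak e-\mathfrak e_0)(1-\cos\theta)$, respectively $(\mathfrak e-\mathfrak e_0)(1+\cos\theta)$, times a positive quantity. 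I would then (i) insert the left inequality into the potential term of $q_{\mathfrak e,\beta}$, using $\mathcal R_\beta\succeq0$, to replace the denominator $1+\mathfrak e\cos\theta$ by $1+\mathfrak e_0\cos\theta$ at the cost of a factor $\frac{1+\mathfrak e_0}{1+\mathfrak e}$, and (ii) perform the bijective gauge change $y=\sqrt{\rho(\theta)}\,z$ with $\rho(\theta)=\frac{1+\mathfrak e\cos\theta}{1+\mathfrak e_0\cos\theta}$, which preserves $\bar D_2(\omega,2\pi)$, normalises every singular denominator to $1+\mathfrak e_0\cos\theta$, and produces only the curvature term $-\sqrt\rho\,(\sqrt\rho)''|z|^2$ and the weight $\rho$ in front of $|z'+J_2z|^2$. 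After estimating $\rho,\rho',\rho''$ uniformly in $\theta$ (the sandwich gives $\frac{1-\mathfrak e}{1-\mathfrak e_0}\le\rho\le\frac{1+\mathfrak e}{1+\mathfrak e_0}$), the resulting lower bound is the quadratic form of an operator $-\frac{d^2}{d\theta^2}I_2-2J_2\frac{d}{d\theta}+\frac{\mathcal R'(\theta)}{1+\mathfrak e_0\cos\theta}$ for an explicit symmetric $2\times2$ matrix $\mathcal R'(\theta)$ built from $\beta,\beta_0,\mathfrak e,\mathfrak e_0,\cos\theta$, and the problem reduces to the pointwise inequalities $\mathcal R'(\theta)\succeq\mathcal R_{\beta_0}$ for $\theta\in[0,2\pi]$. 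These are two inequalities in the single variable $c=\cos\theta\in[-1,1]$, and the precise statement $0\le\beta<\frac{1+\mathfrak e}{1+3\mathfrak e-2\mathfrak e_0}\beta_0$ is exactly the condition under which they hold; once they do, $\mathcal F(\mathfrak e,\beta)$ dominates $\mathcal F(\mathfrak e_0,\beta_0)>0$, and Lemma \ref{lemposi1} follows.

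The hard part will be the second (``stiff'') eigendirection, the $\frac32+\beta$ entry of $\mathcal R_\beta$: there the sandwich alone only yields the coefficient $\frac{1+\mathfrak e_0}{1+\mathfrak e}\big(\frac32+\beta\big)$, which is strictly less than $\frac32+\beta_0$ whenever $\beta<\beta_0$ and $\mathfrak e>\mathfrak e_0$, so a naive term–by–term comparison fails; the deficit has to be recovered by not discarding the $\rho$–weight in $|z'+J_2z|^2$ and the curvature term $-\sqrt\rho\,(\sqrt\rho)''$, and by exploiting the surplus the same estimates leave in the ``soft'' direction. Carrying out this bookkeeping sharply is what produces the constant $\frac{1+\mathfrak e}{1+3\mathfrak e-2\mathfrak e_0}$ rather than a smaller one, and since the estimate degenerates as $\mathfrak e\uparrow1$ — consistent with the singularity of \eqref{beta-system} at $\mathfrak e=1$ — the argument is carried out throughout for $\mathfrak e\in[0,1)$.
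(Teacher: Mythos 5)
Your proposal contains a genuine gap, and you have in fact located it yourself: you observe that in the stiff eigendirection the sandwich inequality only yields the coefficient $\frac{1+\mathfrak{e}_0}{1+\mathfrak{e}}\big(\frac32+\beta\big)<\frac32+\beta_0$, so that a term-by-term comparison with $\mathcal{F}(\mathfrak{e}_0,\beta_0)$ fails, and you then assert that the deficit ``has to be recovered'' from the $\rho$-weight on the kinetic term and the curvature term $-\sqrt{\rho}\,(\sqrt{\rho})''$ after the gauge change $y=\sqrt{\rho}\,z$, and that ``carrying out this bookkeeping sharply is what produces the constant $\frac{1+\mathfrak{e}}{1+3\mathfrak{e}-2\mathfrak{e}_0}$.'' That bookkeeping is the entire content of the lemma and it is nowhere carried out; the claim that your pointwise matrix inequalities $\mathcal{R}'(\theta)\succeq\mathcal{R}_{\beta_0}$ hold exactly on the stated region is an unproved assertion, and it is far from clear that the $\rho$-weight and curvature terms (which are of indefinite sign and degenerate as $\mathfrak{e}\uparrow 1$) can be marshalled to close the estimate. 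Steps (i) and (ii) also partly duplicate each other: once you have replaced $\frac{1}{1+\mathfrak{e}\cos\theta}$ by $\frac{1+\mathfrak{e}_0}{1+\mathfrak{e}}\frac{1}{1+\mathfrak{e}_0\cos\theta}$ in the potential, the stated purpose of the gauge change (normalising the denominators) has already been achieved. Finally, your use of $\mathcal{R}_\beta\succeq 0$ quietly restricts to $\beta\le\frac32$, which is not a hypothesis of the lemma.

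The idea you are missing is that one should not compare $\mathcal{F}(\mathfrak{e},\beta)$ with $\mathcal{F}(\mathfrak{e}_0,\beta_0)$ itself but with the scalar multiple $\frac{\beta}{\beta_0}\frac{1+\mathfrak{e}_0}{1+\mathfrak{e}}\,\mathcal{F}(\mathfrak{e}_0,\beta_0)$. Writing $\beta\,\mathrm{diag}(-1,1)=-\beta I_2+2\beta\,\mathrm{diag}(0,1)$, this prefactor is chosen precisely so that the anisotropic rank-one parts compare favorably by your left sandwich inequality (the factor $\frac{\beta}{\beta_0}$ matches the $2\beta_0\,\mathrm{diag}(0,1)$ term to $2\beta\,\mathrm{diag}(0,1)$, and the factor $\frac{1+\mathfrak{e}_0}{1+\mathfrak{e}}$ absorbs the change of denominator), so they can simply be discarded. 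What remains is
$(1-\frac{\beta}{\beta_0}\frac{1+\mathfrak{e}_0}{1+\mathfrak{e}})$ times $-\frac{d^2}{d\theta^2}I_2-2J_2\frac{d}{d\theta}+\frac{\delta_1 I_2}{1+\mathfrak{e}\cos\theta}$ with $\delta_1=\frac{\frac32(1-\beta/\beta_0)}{1-\frac{\beta}{\beta_0}\frac{1+\mathfrak{e}_0}{1+\mathfrak{e}}}$; conjugating by the rotation $R(\theta)$ splits this into $\mathcal{A}(\delta_1,\mathfrak{e})\oplus\mathcal{A}(\delta_1,\mathfrak{e})$, whose positivity for $\delta_1>1$ is exactly the already-established Lemma \ref{lem-hyp}, and the condition $\delta_1>1$ is exactly the inequality (\ref{esit1}). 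No gauge change, curvature term, or pointwise matrix inequality is needed. Your sandwich inequality and your rotation are the right ingredients, but without the multiplicative prefactor the argument does not close.
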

\begin{proof}
Direct computation shows that
\bea
&&\mathcal{F}(\mathfrak{e},\beta)-\frac{\beta}{\beta_{0}}\frac{1+\mathfrak{e}_{0}}{1+\mathfrak{e}}\mathcal{F}(\mathfrak{e}_{0},\beta_{0})\nonumber\\
&=&(1-\frac{\beta}{\beta_{0}}\frac{1+\mathfrak{e}_{0}}{1+\mathfrak{e}})(-\frac{d^2}{d\theta^2}I_{2}-2J_{2}\frac{d}{d\theta})+\frac{(\frac{3}{2}-\beta)I_{2}}{1+\mathfrak{e}\cos\theta}
-\frac{\beta}{\beta_{0}}\frac{1+\mathfrak{e}_{0}}{1+\mathfrak{e}}\frac{(\frac{3}{2}-\beta_{0})I_{2}}{1+\mathfrak{e}_{0}\cos\theta}\nonumber\\
&&+\frac{2\beta\left(\begin{array}{cccc}0 & 0\\ 0 & 1\end{array}\right)}{1+\mathfrak{e}\cos \theta}-
\frac{1+\mathfrak{e}_{0}}{1+\mathfrak{e}}\frac{2\beta\left(\begin{array}{cccc}0 & 0\\ 0 & 1\end{array}\right)}{1+\mathfrak{e}_{0}\cos \theta}.\nonumber
\eea
Since $(\beta, \mathfrak{e})$ satisfy (\ref{esit1}), we have $\frac{\beta}{\beta_{0}}\frac{1+\mathfrak{e}_{0}}{1+\mathfrak{e}}<1$ and $\frac{1+\mathfrak{e}_{0}}{1+\mathfrak{e}}\leq\frac{1+\mathfrak{e}_{0}\cos\theta}{1+\mathfrak{e}\cos\theta}$ hence
\bea\label{inequ1}
&&\mathcal{F}(\mathfrak{e},\beta)-\frac{\beta}{\beta_{0}}\frac{1+\mathfrak{e}_{0}}{1+\mathfrak{e}}\mathcal{F}(\mathfrak{e}_{0},\beta_{0})\nonumber\\
&\geq&(1-\frac{\beta}{\beta_{0}}\frac{1+\mathfrak{e}_{0}}{1+\mathfrak{e}})(-\frac{d^2}{d\theta^2}I_{2}-2J_{2}\frac{d}{d\theta}+
\frac{\frac{3}{2}-\beta-\frac{\beta}{\beta_{0}}(\frac{3}{2}-\beta_{0})}{1-\frac{\beta}{\beta_{0}}\frac{1+\mathfrak{e}_{0}}{1+\mathfrak{e}}}\frac{I_{2}}{1+\mathfrak{e}\cos\theta})\nonumber\\
&=&(1-\frac{\beta}{\beta_{0}}\frac{1+\mathfrak{e}_{0}}{1+\mathfrak{e}})(-\frac{d^2}{d\theta^2}I_{2}-2J_{2}\frac{d}{d\theta}+
\frac{\frac{3}{2}(1-\frac{\beta}{\beta_{0}})}{1-\frac{\beta}{\beta_{0}}\frac{1+\mathfrak{e}_{0}}{1+\mathfrak{e}}}\frac{I_{2}}{1+\mathfrak{e}\cos\theta})
\eea
Direct computation shows that
\bea\label{decom}
R(\theta)(-\frac{d^2}{d\theta^2}I_{2}-2J_{2}\frac{d}{d\theta}+
\frac{\frac{3}{2}(1-\frac{\beta}{\beta_{0}})}{1-\frac{\beta}{\beta_{0}}\frac{1+\mathfrak{e}_{0}}{1+\mathfrak{e}}}\frac{I_{2}}{1+\mathfrak{e}\cos\theta})R^{T}(\theta)
=\mathcal{A}(\delta_{1},\mathfrak{e})\oplus\mathcal{A}(\delta_{1},\mathfrak{e}),
\eea
where $\mathcal{A}(\delta_{1},\mathfrak{e})$ is given by (\ref{1-dim-oper}) with $$\delta_{1}=\frac{\frac{3}{2}(1-\frac{\beta}{\beta_{0}})}{1-\frac{\beta}{\beta_{0}}\frac{1+\mathfrak{e}_{0}}{1+\mathfrak{e}}}.$$
If $(\beta, \mathfrak{e})$ satisfy
$$
0\leq\beta<\frac{1+\mathfrak{e}}{1+3\mathfrak{e}-2\mathfrak{e}_{0}}\beta_{0},\ \ \mathfrak{e}_{0}\leq\mathfrak{e},
$$
then $\delta_{1}>1$, from Lemma \ref{lem-hyp}, we have $\mathcal{A}(\delta_{1},e)>0$ in $\bar{D}_{1}(\omega, 2\pi)$, combine with (\ref{inequ1}) and (\ref{decom}), we obtain
$$
\mathcal{F}(\mathfrak{e},\beta)>\frac{\beta}{\beta_{0}}\frac{1+\mathfrak{e}_{0}}{1+\mathfrak{e}}\mathcal{F}(\mathfrak{e}_{0},\beta_{0}),\ \ \text{in}\ \ \bar{D}_{2}(\omega, 2\pi),
$$
hence the positivity of $\mathcal{F}(\mathfrak{e}_{0},\beta_{0})$ implies the positivity of $\mathcal{F}(\mathfrak{e},\beta)$. This completes the proof of the lemma.
\end{proof}
\begin{lem}\label{lem-posi2} If $\mathcal{F}(\mathfrak{e},\beta)$ is positive in $\bar{D}_{2}(\omega,2\pi)$ for some $\beta_{0}>0$ and $\mathfrak{e}_{0}\geq 0$ then
$\mathcal{F}(\mathfrak{e},\beta)$ is positive in $\bar{D}_{2}(\omega,2\pi)$ for any parameter $(\beta, \mathfrak{e})$ which satisfy the following inequality
\bea
0\leq\beta<\frac{1-\mathfrak{e}}{1-3\mathfrak{e}+2\mathfrak{e}_{0}}\beta_{0},\ \ \mathfrak{e}_{0}\geq\mathfrak{e}.\label{esti2}
\eea
\end{lem}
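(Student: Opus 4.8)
The plan is to run the argument of Lemma \ref{lemposi1} essentially verbatim, replacing the comparison weight $\frac{1+\mathfrak{e}_{0}}{1+\mathfrak{e}}$ used there by $\frac{1-\mathfrak{e}_{0}}{1-\mathfrak{e}}$, which is the weight adapted to the regime $\mathfrak{e}_{0}\geq\mathfrak{e}$. Concretely, I would form the operator difference
$$
\mathcal{F}(\mathfrak{e},\beta)-\frac{\beta}{\beta_{0}}\frac{1-\mathfrak{e}_{0}}{1-\mathfrak{e}}\,\mathcal{F}(\mathfrak{e}_{0},\beta_{0})
$$
and group its terms exactly as in the proof of Lemma \ref{lemposi1}: the differential part acquires the scalar factor $1-\frac{\beta}{\beta_{0}}\frac{1-\mathfrak{e}_{0}}{1-\mathfrak{e}}$; the diagonal part splits into a ``multiple of $I_{2}$'' piece coming from $\frac{(\frac{3}{2}-\beta)I_{2}}{1+\mathfrak{e}\cos\theta}$ and $-\frac{\beta}{\beta_{0}}\frac{1-\mathfrak{e}_{0}}{1-\mathfrak{e}}\frac{(\frac{3}{2}-\beta_{0})I_{2}}{1+\mathfrak{e}_{0}\cos\theta}$, together with a $\left(\begin{array}{cc}0&0\\0&1\end{array}\right)$-perturbation piece.

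The only place where the computation changes qualitatively is the elementary pointwise estimate. Clearing the positive denominators, one checks that for $0\leq\mathfrak{e}\leq\mathfrak{e}_{0}<1$,
$$
\frac{1-\mathfrak{e}_{0}}{1-\mathfrak{e}}\leq\frac{1+\mathfrak{e}_{0}\cos\theta}{1+\mathfrak{e}\cos\theta}\quad\Longleftrightarrow\quad(\mathfrak{e}_{0}-\mathfrak{e})(1+\cos\theta)\geq0 ,
$$
which holds for all $\theta$ — note that it is now the factor $1+\cos\theta\geq0$ that does the work, in contrast with the $1-\cos\theta\geq0$ appearing in Lemma \ref{lemposi1}. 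Equivalently $\frac{1-\mathfrak{e}_{0}}{1-\mathfrak{e}}\frac{1}{1+\mathfrak{e}_{0}\cos\theta}\leq\frac{1}{1+\mathfrak{e}\cos\theta}$, so the $\left(\begin{array}{cc}0&0\\0&1\end{array}\right)$-perturbation piece is a nonnegative operator and may be dropped, while (using $\beta_{0}\leq\frac{3}{2}$, as throughout this section) the $I_{2}$-pieces collapse to $\frac{\frac{3}{2}(1-\beta/\beta_{0})}{1+\mathfrak{e}\cos\theta}I_{2}$, exactly as in (\ref{inequ1}). This yields
$$
\mathcal{F}(\mathfrak{e},\beta)-\frac{\beta}{\beta_{0}}\frac{1-\mathfrak{e}_{0}}{1-\mathfrak{e}}\mathcal{F}(\mathfrak{e}_{0},\beta_{0})\geq\Big(1-\frac{\beta}{\beta_{0}}\frac{1-\mathfrak{e}_{0}}{1-\mathfrak{e}}\Big)\Big(-\frac{d^{2}}{d\theta^{2}}I_{2}-2J_{2}\frac{d}{d\theta}+\frac{\frac{3}{2}(1-\beta/\beta_{0})}{1-\frac{\beta}{\beta_{0}}\frac{1-\mathfrak{e}_{0}}{1-\mathfrak{e}}}\frac{I_{2}}{1+\mathfrak{e}\cos\theta}\Big) ,
$$
and conjugating the bracketed operator by $R(\theta)$ as in (\ref{decom}) splits it into $\mathcal{A}(\delta_{1},\mathfrak{e})\oplus\mathcal{A}(\delta_{1},\mathfrak{e})$ with $\delta_{1}=\frac{\frac{3}{2}(1-\beta/\beta_{0})}{1-\frac{\beta}{\beta_{0}}\frac{1-\mathfrak{e}_{0}}{1-\mathfrak{e}}}$.

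It then remains to translate ``$\delta_{1}>1$'' and ``leading factor $>0$'' into the hypothesis (\ref{esti2}). Since $\mathfrak{e}_{0}\geq\mathfrak{e}$ one has $1-3\mathfrak{e}+2\mathfrak{e}_{0}\geq1-\mathfrak{e}>0$, so the threshold in (\ref{esti2}) is a well-defined positive number; moreover (\ref{esti2}) forces $\frac{\beta}{\beta_{0}}\frac{1-\mathfrak{e}_{0}}{1-\mathfrak{e}}<1$, so the leading factor is strictly positive and the comparison above is in the correct direction. A direct algebraic manipulation then shows that $\delta_{1}>1$ is equivalent to $\frac{1}{2}>\frac{\beta}{\beta_{0}}\cdot\frac{1-3\mathfrak{e}+2\mathfrak{e}_{0}}{2(1-\mathfrak{e})}$, i.e. to $0\leq\beta<\frac{1-\mathfrak{e}}{1-3\mathfrak{e}+2\mathfrak{e}_{0}}\beta_{0}$, which is precisely (\ref{esti2}). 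By Lemma \ref{lem-hyp} we then have $\mathcal{A}(\delta_{1},\mathfrak{e})>0$ on $\bar{D}_{1}(\omega,2\pi)$, hence the bracketed operator is positive on $\bar{D}_{2}(\omega,2\pi)$, and therefore $\mathcal{F}(\mathfrak{e},\beta)>\frac{\beta}{\beta_{0}}\frac{1-\mathfrak{e}_{0}}{1-\mathfrak{e}}\mathcal{F}(\mathfrak{e}_{0},\beta_{0})\geq0$, which gives the claim. The main obstacle is purely organizational: checking that the sign of the $1\pm\cos\theta$ factor flips in the right direction relative to Lemma \ref{lemposi1}, and that the condition $\delta_{1}>1$ simplifies to the sharp threshold in (\ref{esti2}) rather than to the weaker bound $\beta<\frac{(1-\mathfrak{e})\beta_{0}}{1-\mathfrak{e}_{0}}$ that the positivity of the leading factor alone would give.
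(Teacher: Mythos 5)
Your proposal is correct and follows essentially the same route as the paper's own proof: the comparison operator $\mathcal{F}(\mathfrak{e},\beta)-\frac{\beta}{\beta_{0}}\frac{1-\mathfrak{e}_{0}}{1-\mathfrak{e}}\mathcal{F}(\mathfrak{e}_{0},\beta_{0})$, the pointwise estimate $\frac{1-\mathfrak{e}_{0}}{1-\mathfrak{e}}\leq\frac{1+\mathfrak{e}_{0}\cos\theta}{1+\mathfrak{e}\cos\theta}$, the reduction to $\mathcal{A}(\delta,\mathfrak{e})\oplus\mathcal{A}(\delta,\mathfrak{e})$ with the same $\delta$, and Lemma \ref{lem-hyp}. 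Your explicit remark that the collapse of the $I_{2}$-pieces uses $\beta_{0}\leq\frac{3}{2}$ is a point the paper leaves implicit but which holds in all its applications.
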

\begin{proof}
Direct computation shows that
\bea
&&\mathcal{F}(\mathfrak{e},\beta)-\frac{\beta}{\beta_{0}}\frac{1-\mathfrak{e}_{0}}{1-\mathfrak{e}}\mathcal{F}(\mathfrak{e}_{0},\beta_{0})\nonumber\\
&=&(1-\frac{\beta}{\beta_{0}}\frac{1-\mathfrak{e}_{0}}{1-\mathfrak{e}})(-\frac{d^2}{d\theta^2}I_{2}-2J_{2}\frac{d}{d\theta})+\frac{(\frac{3}{2}-\beta)I_{2}}{1+\mathfrak{e}\cos\theta}
-\frac{\beta}{\beta_{0}}\frac{1-\mathfrak{e}_{0}}{1-\mathfrak{e}}\frac{(\frac{3}{2}-\beta_{0})I_{2}}{1+\mathfrak{e}_{0}\cos\theta}\nonumber\\
&&+\frac{2\beta\left(\begin{array}{cccc}0 & 0\\ 0 & 1\end{array}\right)}{1+\mathfrak{e}\cos \theta}-
\frac{1-\mathfrak{e}_{0}}{1-\mathfrak{e}}\frac{2\beta\left(\begin{array}{cccc}0 & 0\\ 0 & 1\end{array}\right)}{1+\mathfrak{e}_{0}\cos \theta}.\nonumber
\eea
Since $(\beta, \mathfrak{e})$ satisfies (\ref{esti2}), we have $\frac{\beta}{\beta_{0}}\frac{1-\mathfrak{e}_{0}}{1-\mathfrak{e}}<1$ and
$\frac{1-\mathfrak{e}_{0}}{1-\mathfrak{e}}\leq\frac{1+\mathfrak{e}_{0}\cos\theta}{1+\mathfrak{e}\cos\theta}$, hence
\bea\label{inequ2}
&&\mathcal{F}(\mathfrak{e},\beta)-\frac{\beta}{\beta_{0}}\frac{1-\mathfrak{e}_{0}}{1-\mathfrak{e}}\mathcal{F}(\mathfrak{e}_{0},\beta_{0})\nonumber\\
&\geq&(1-\frac{\beta}{\beta_{0}}\frac{1-\mathfrak{e}_{0}}{1-\mathfrak{e}})(-\frac{d^2}{d\theta^2}I_{2}-2J_{2}\frac{d}{d\theta}+
\frac{\frac{3}{2}-\beta-\frac{\beta}{\beta_{0}}(\frac{3}{2}-\beta_{0})}{1-\frac{\beta}{\beta_{0}}\frac{1-\mathfrak{e}_{0}}{1-\mathfrak{e}}}\frac{I_{2}}{1+\mathfrak{e}\cos\theta})\nonumber\\
&=&(1-\frac{\beta}{\beta_{0}}\frac{1-\mathfrak{e}_{0}}{1-\mathfrak{e}})(-\frac{d^2}{d\theta^2}I_{2}-2J_{2}\frac{d}{d\theta}+
\frac{\frac{3}{2}(1-\frac{\beta}{\beta_{0}})}{1-\frac{\beta}{\beta_{0}}\frac{1-\mathfrak{e}_{0}}{1-\mathfrak{e}}}\frac{I_{2}}{1+\mathfrak{e}\cos\theta})
\eea
Direct computation shows that
\bea\label{decom2}
R(\theta)(-\frac{d^2}{d\theta^2}I_{2}-2J_{2}\frac{d}{d\theta}+
\frac{\frac{3}{2}(1-\frac{\beta}{\beta_{0}})}{1-\frac{\beta}{\beta_{0}}\frac{1-\mathfrak{e}_{0}}{1-\mathfrak{e}}}\frac{I_{2}}{1+\mathfrak{e}\cos\theta})R^{T}(\theta)
=\mathcal{A}(\delta_{2},\mathfrak{e})\oplus\mathcal{A}(\delta_{2},\mathfrak{e}),
\eea
where $\mathcal{A}(\delta_{2},\mathfrak{e})$ is given by (\ref{1-dim-oper}) with $$\delta_{2}=\frac{\frac{3}{2}(1-\frac{\beta}{\beta_{0}})}{1-\frac{\beta}{\beta_{0}}\frac{1-\mathfrak{e}_{0}}{1-\mathfrak{e}}}.$$
If $(\beta, \mathfrak{e})$ satisfy
$$
0\leq\beta<\frac{1-\mathfrak{e}}{1-3\mathfrak{e}+2\mathfrak{e}_{0}}\beta_{0},\ \ \mathfrak{e}_{0}\geq\mathfrak{e},
$$
then $\delta_{2}>1$, from Lemma \ref{lem-hyp}, we have $\mathcal{A}(\delta_{2},e)>0$ in $\bar{D}_{1}(\omega, 2\pi)$, combine with (\ref{inequ2}) and (\ref{decom2}), we obtain
$$
\mathcal{F}(\mathfrak{e},\beta)>\frac{\beta}{\beta_{0}}\frac{1-\mathfrak{e}_{0}}{1-\mathfrak{e}}\mathcal{F}(\mathfrak{e}_{0},\beta_{0}),\ \ \text{in}\ \ \bar{D}_{2}(\omega, 2\pi),
$$
hence the positivity of $\mathcal{F}(\mathfrak{e}_{0},\beta_{0})$ implies the positivity of $\mathcal{F}(\mathfrak{e},\beta)$. This completes the proof of the lemma.
\end{proof}
Based on Corollary \ref{Hy-Po2}, Lemma \ref{lemposi1} and Lemma \ref{lem-posi2}, we can easy to obtain Theorem \ref{hyper}.
\begin{proof}
Take finite set $\mathcal{K}=\{(1.36,0.0), (1.36,0.1),(1.36,0.2),\ldots,(1.36,0.9)\}$, numerically, one can check $\gamma_{\beta,\mathfrak{e}}(2\pi)$ is hyperbolic for $(\beta_{0},\mathfrak{e}_{0})\in\mathcal{K}$.
From Corollary \ref{Hy-Po2}, we know $\mathcal{F}(\mathfrak{e}_{0},\beta_{0})$ is positive, this combine with Lemma \ref{lemposi1} and Lemma\ref{lem-posi2}, we get the positive
definite region $\mathfrak{U}$ for $\mathcal{F}(\mathfrak{e},\beta)$, use Corollory  \ref{Hy-Po2} again, we get $\gamma_{\beta,\mathfrak{e}}(2\pi)$ is hyperbolic in region $\mathfrak{U}$.
This completes the proof of Theorem \ref{hyper}.
\end{proof}

Finally, based on Theorem \ref{hyper}, we will give a unified proof of the hyperbolicity of the regular $3$, $4$ and $5$-gon ERE for any eccentricity $\mathfrak{e}\in[0,1)$.
\subsection{Hyperbolicity of the regular $3, 4$ and $5$-gon ERE}
Indeed, the hyperbolicity of the regular $3$-gon follows straightforwardly from the perspective of our Theorem \ref{redu-n-gon}, since the essential part of the regular $3$-gon only has one part $\eta_{1,\mathfrak{e}}(\theta)$. The hyperbolicity of $\eta_{1,\mathfrak{e}}(2\pi)$ is already implied by our Theorem \ref{unstable}, hence the regular $3$-gon is hyperbolic for any $\mathfrak{e}\in[0,1)$. For the regular $4$ and $5$-gon ERE, the essential part can be decomposed into two parts $\eta_{1,\mathfrak{e}}(\theta), \eta_{2,\mathfrak{e}}(\theta)$, we only need to study the hyperbolicity of $\eta_{2,\mathfrak{e}}(2\pi)$. First, for the regular $4$-gon ERE, $\eta_{2,\mathfrak{e}}(\theta)$ satisfies a four dimensional Hamiltonian system,
$$
\begin{aligned}
& \eta_{2,  \mathfrak{e}}^{\prime} (\theta) =J \mathcal{B}_{2} (\theta) \eta_{2,  \mathfrak{e}} (\theta),   \quad \eta_{2,  \mathfrak{e}} (0) =I_{4} . \\
& \mathcal{B}_{2} (\theta) =\left (\begin{array}{cc}
I_{2} & -J_{2} \\
J_{2} & I_{2}-\frac{I_{2}+\mathcal{U}_{2}}{1+\mathfrak{e} \cos \theta}
\end{array}\right),
\end{aligned}
$$
where
$$
\mathcal{U}_{2}=\frac{1}{\lambda}\left (\begin{array}{cccc}
a_{2} & 0  \\
0 & b_{2}
\end{array}\right),
$$
$$
a_{2}=P_{2}-3 Q_{2}=,  \quad b_{2}=P_{2}+3 Q_{2},
$$
$$
P_{2}=\sum_{j=1}^{3} \frac{1-\cos \theta_{j 2} \cos \theta_{j}}{2 d_{4 j}^{3}},  \quad Q_{2}=\sum_{j=1}^{3} \frac{\cos \theta_{j}-\cos \theta_{j 2}}{2 d_{4 j}^{3}},  \quad \lambda=\frac{1}{4} \sum_{j=1}^{3} \csc \frac{\pi j}{4} .
$$
Direct computations show that
$$
\frac{a_{2}}{\lambda}=\frac{2\sqrt{2}-4}{4+\sqrt{2}},\ \ \frac{b_{2}}{\lambda}=\frac{8-\sqrt{2}}{4+\sqrt{2}}.
$$
We rewrite $\mathcal{U}_{2}$ in the following form,
$$
\begin{aligned}
\mathcal{U}_{2}&=\frac{a_{2}+b_{2}}{2\lambda}I_{2}+\frac{b_{2}-a_{2}}{2\lambda}\left (\begin{array}{cccc}
-1 & 0  \\
0 & 1
\end{array}\right)\\
&=\frac{1}{2}I_{2}+\frac{12-3\sqrt{2}}{8+2\sqrt{2}}\left (\begin{array}{cccc}
-1 & 0  \\
0 & 1
\end{array}\right).
\end{aligned}
$$
This is just the system (\ref{hyper}) with $\beta=\frac{12-3\sqrt{2}}{4+\sqrt{2}}\approx0.7164$, since $\{0.7164\}\times[0,1)\subset\mathfrak{U}$, Theorem \ref{hyper}
implies the hyperbolicity of the regular $4$-gon ERE for any eccentricity.

For the regular $5$-gon ERE, it satisfies a eight dimensional Hamiltonian system, it's more complex than the $3, 4$-gon system. More precisely, it's satisfies
$$
\eta'_{2,\mathfrak{e}}(\theta)=J\mathcal{B}_2(\theta)\eta_{2,e}(\theta),\ \ \eta_{2,\mathfrak{e}}(0)=I_{8}.
$$
\bea
\mathcal{B}_{2}(\theta)=\left( \begin{array}{cccc} I_{4} & -\mathbb{J}_{2}\\
\mathbb{J}_{2} & I_{4}-\frac{I_{4}+\mathcal{U}_{2}
}{1+\mathfrak{e}\cos\theta}  \end{array}\right),\nonumber
\eea
where
\bea
\mathcal{U}_{2}==\frac{1}{\lambda}\left(\begin{array}{cccc}a_{2} & 0 & 0 & S_{2}\\
0 & b_{2} & -S_{2} & 0\\ 0 & -S_{2} & a_{2} & 0\\
S_{2} & 0 & 0 & b_{2}\end{array}\right),\nonumber
\eea
\bea
a_{2}=P_{2}-3Q_{2},\ \ b_{2}=P_{2}+3Q_{2},\nonumber
\eea
\bea\label{parameter}
P_{2}=\sum_{j=1}^{4}\frac{1-\cos \theta_{j2}\cos \theta_{j}}{2d_{5j}^3},\ \
S_{2}=\sum_{j=1}^{4}\frac{\sin \theta_{j2}\sin \theta_{j}}{2d_{5j}^3},\ \
Q_{2}=\sum_{j=1}^{4}\frac{\cos \theta_{j}-\cos \theta_{j2}}{2d_{5j}^3},\ \
\lambda=\frac{1}{4}\sum_{j=1}^{4}\csc \frac{\pi j}{5}.
\eea
In order to prove the hyperbolicity of the regular $5$-gon ERE, we need further estimation. Let consider the following operator
$$
\mathcal{A}_{2}(\mathfrak{e})=-\frac{d^{2}}{d\theta^{2}}-2\mathbb{J}_{2}\frac{d}{d\theta}+\frac{I_{4}+\mathcal{U}_{2}}{1+\mathfrak{e}\cos \theta},\ \ \mathfrak{e}\in[0,1),$$
which is a self-adjoint operator with domain
$$\bar{D}_{4}(\omega, 2\pi)=\{y\in W^{2,2}([0,2\pi],\mathbb{C}^{4})\,|\,
y(2\pi)=\omega y(0),\dot{y}(2\pi)=\omega\dot{y}(0)\},$$
where $\omega\in\mathbb{U}$ and $W^{2,2}([0,2\pi])$ is the usual Sobolev space.

Now, we define
\bea
E_{2}=\left( \begin{array}{cccc} a_{2} & 0\\
0 & b_{2}\end{array}\right),\ \ F_{2}=\left( \begin{array}{cccc} 0 & S_{2}\\
-S_{2} & 0\end{array}\right),\ \ G_{2}=\left( \begin{array}{cccc} b_{2} & 0\\
0 & a_{2}\end{array}\right),\ \ \tilde{F}_{2}=\left(\begin{array}{cccc} S_{2} & 0\\
0 & S_{2}\end{array}\right)\nonumber
\eea
then
\bea
\mathcal{A}_{2}(\mathfrak{e})=
-\frac{d^2}{d\theta^2}I_{4}-2\mathbb{J}_{2}\frac{d}{d\theta}+\frac{I_{4}+\frac{1}{2\lambda}\left(\begin{array}{cccc}E_{2}+G_{2} & 2F_{2}\\
-2F_{2} & E_{2}+G_{2}\end{array}\right)}{1+\mathfrak{e}\cos \theta}+\frac{\frac{1}{2\lambda}\left(\begin{array}{cccc}E_{2}-G_{2} & 0\\
0 & E_{2}-G_{2}\end{array}\right)}{1+\mathfrak{e}\cos \theta}\nonumber
\eea
Direct computation shows that $S_{2}\approx 0.262865>0$, then in $\bar{D}_{4}(\omega,2\pi)$ we have
\bea
\left(\begin{array}{cccc}E_{2}+G_{2}-2\tilde{F}_{2} & 0\\
0 & E_{2}+G_{2}-2\tilde{F}_{2}\end{array}\right)\leq\left(\begin{array}{cccc}E_{2}+G_{2} & 2F_{2}\\
-2F_{2} & E_{2}+G_{2}\end{array}\right).\nonumber
\eea
hence we have
\bea\label{inequl2}
\check{\mathcal{A}}_{2}(\mathfrak{e})\leq\mathcal{A}_{2}(\mathfrak{e}),\ \ \mathrm{in}\ \ \bar{D}_{4}(\omega,2\pi),
\eea
where
\bea
&&\check{\mathcal{A}}_{2}(\mathfrak{e})=
-\frac{d^2}{d\theta^2}I_{4}-2\mathbb{J}_{2}\frac{d}{d\theta}+\frac{I_{4}+\frac{1}{2\lambda}\left(\begin{array}{cccc}E_{2}+G_{2}-2\tilde{F}_{2} & 0\\
0 & E_{2}+G_{2}-2\tilde{F}_{2}\end{array}\right)}{1+\mathfrak{e}\cos \theta}+\frac{\frac{1}{2\lambda}\left(\begin{array}{cccc}E_{2}-G_{2} & 0\\
0 & E_{2}-G_{2}\end{array}\right)}{1+\mathfrak{e}\cos \theta} \nonumber
\eea
It is obviously that this two operator can be decomposed,
\bea
\check{\mathcal{A}}_{2}(\mathfrak{e})=\check{\mathcal{A}}_{2,0}(\mathfrak{e})\oplus\check{\mathcal{A}}_{2,0}(\mathfrak{e}),\label{dec}
\eea
where
\bea
\check{\mathcal{A}}_{2,0}(\mathfrak{e})&=&-\frac{d^2}{d\theta^2}I_{2}-2J_{2}\frac{d}{d\theta}+\frac{I_{2}+\frac{1}{2\lambda}(E_{2}+G_{2}-2\tilde{F}_{2})}{1+\mathfrak{e}\cos \theta}
+\frac{\frac{1}{2\lambda}(E_{2}-G_{2})}{1+\mathfrak{e}\cos \theta}\nonumber\\
&=&-\frac{d^2}{d\theta^2}I_{2}-2J_{2}\frac{d}{d\theta}+\frac{I_{2}+\frac{1}{2\lambda}(a_{2}+b_{2}-2S_{2})I_{2}}{1+\mathfrak{e}\cos \theta}
+\frac{\frac{1}{2\lambda}(b_{2}-a_{2})\left(\begin{array}{cccc}-1 & 0\\ 0 & 1\end{array}\right)}{1+\mathfrak{e}\cos \theta}, \nonumber
\eea
From (\ref{parameter}), direct computation shows that
$$
\frac{1}{2\lambda}(a_{2}+b_{2}-2S_{2})=\frac{1}{2}, \ \ \frac{1}{2\lambda}(b_{2}-a_{2})\approx 1.145898.
$$
One can see that $\check{\mathcal{A}}_{2,0}(\mathfrak{e})$ is just the operator (\ref{ope-est}) with $\beta=\frac{1}{2\lambda}(b_{2}-a_{2})\approx 1.145898$.
Since $\{1.145898\}\times[0,1)\subset \mathfrak{U}$. Theorem \ref{hyper} and Corollary \ref{Hy-Po2} implies $\check{\mathcal{A}}_{2,0}(\mathfrak{e})$ is positive on
$$\bar{D}_{2}(\omega, 2\pi)=\{y\in W^{2,2}([0,2\pi],\mathbb{C}^{2})\,|\,
y(2\pi)=\omega y(0),\dot{y}(2\pi)=\omega\dot{y}(0)\}$$
for any eccentricity and $\omega\in\mathbb{U}$. From (\ref{dec}) and (\ref{inequl2}), we know $\mathcal{A}_{2}(\mathfrak{e})$ is positive on $\bar{D}_{4}(\omega, 2\pi)$ for any eccentricity and $\omega\in\mathbb{U}$. Direct computation shows that if the monodromy matrix $\eta_{2,\mathfrak{e}}(2\pi)$
has eigenvalue $\omega\in \mathbb{U}$, then there must exist $0\neq x\in \bar{D}_{4}(\omega, 2\pi)$ such that $\mathcal{A}_{2}(\mathfrak{e})x=0$, hence the positive definiteness $\mathcal{A}_{2}(\mathfrak{e})$ in $\bar{D}_{4}(\omega, 2\pi)$ for any $\omega\in\mathbb{U}$ implies
the monodromy matrix $\eta_{2,\mathfrak{e}}(2\pi)$ has no eigenvalues on $\mathbb{U}$, and thus $\eta_{2,\mathfrak{e}}(2\pi)$ is hyperbolic. This completes the proof the hyperbolicity
of the regular $5$-gon ERE for any eccentricity.

\hfill\newline
\textbf{Acknowledgments}
\noindent{\bf Acknowledgement.}
The authors thank Prof. Xijun Hu for useful discussions.
Y.Ou is partially supported by NSFC ($\sharp$12371192), the Young Taishan
Scholars Program of Shandong Province ($\sharp$ tsqn202312055), and the Qilu Young
Scholar Program of Shandong University.

\medskip


\end{document}